\newtheorem{mydef}{Definition}[section]
\newtheorem{thm}[mydef]{Theorem}
\newtheorem{prop}[mydef]{Proposition}
\newtheorem{cor}[mydef]{Corollary}
\theoremstyle{remark}
\newtheorem*{rem}{Remark}
\newtheorem{claim}[mydef]{Claim}
\newtheorem*{ac}{Acknowledgments}
\title{Rapoport-Zink uniformization for the moduli space of polarized K3 surfaces}
\author{Tobias Kreutz}
\date{\vspace{-5ex}}
\begin{document}
\maketitle
\begin{abstract}
We compute the image of the $p$-adic period map for polarized K3 surfaces with supersingular reduction.
This gives rise to a Rapoport-Zink type uniformization of their moduli space by an explicit open rigid analytic subvariety of a local Shimura variety of orthogonal type.
In contrast to the case of Rapoport-Zink uniformization of Shimura varieties and in analogy to the complex case, the uniformi\-zing domain does not carry an action of a $p$-adic Lie group, but only of a discrete subgroup.
We briefly sketch how the same arguments can be applied to obtain a uniformization for the moduli space of smooth cubic fourfolds with supersingular reduction.
\end{abstract}

\tableofcontents

\section*{Introduction: Uniformization of moduli spaces}
\addcontentsline{toc}{section}{Introduction: Uniformization of moduli spaces}

In this paper, we want to give new examples of $p$-adic uniformization for moduli spaces of algebraic varieties, in the case of polarized K3 surfaces and cubic fourfolds.

\subsection*{Shimura varieties of abelian type}
\addcontentsline{toc}{subsection}{Shimura varieties of abelian type}

We first recall the theory for Shimura varieties of Hodge type and abelian type, which are moduli spaces of abelian varieties (or more generally of abelian motives) with additional structure (cf. \cite{Milnemotives}).
The complex analytification of such a Shimura variety $Sh$ has a uniformization as a double quotient
\begin{equation}\label{complexShuniformization} Sh_{\mathbb{C}}^{an} = G(\mathbb{Q}) \backslash \mathcal{D} \times G(\mathbb{A}_f)/K,\end{equation} where $\mathcal{D}$ is a hermitian symmetric domain which is the $G(\mathbb{R})$-conjugacy class of a minuscule Hodge cocharacter $h: \mathbb{S} \to G_{\mathbb{R}}$ for a reductive group $G$ over $\mathbb{Q}$, and $K \subset G(\mathbb{A}_f)$ is a compact open subgroup.
The complex uniformization (\ref{complexShuniformization}) can also be written as a disjoint union $$Sh_{\mathbb{C}}^{an} = \coprod_g \Gamma_g \backslash \mathcal{D},$$ where $g$ runs through a set of representatives of the finite double coset $G(\mathbb{Q}) \backslash G(\mathbb{A}_f)/K$.
Note that the uniformization provides a purely group-theoretical description of the complex moduli space.

On the other hand, one is also interested in the arithmetic study of such a moduli space.
Namely, the moduli problem for $Sh$ is naturally defined over a number field $E$, the so-called reflex field of $Sh$, and for abelian type Shimura varieties with hyperspecial level at $p$ one can even define an integral canonical model $\mathscr{S}$ over $\mathcal{O}_{E, (v)}$, where $v$ is a prime of $E$ lying over the rational prime $p$. We refer the reader to the work of Kisin \cite{Intmod} for a construction of integral canonical models of Shimura varieties of abelian type with hyperspecial level at $p$.

In this situation, there is an analog of the complex uniformization in the $p$-adic setting, known as Rapoport-Zink uniformization.
Due to the more complicated structure of the moduli space in the $p$-adic case, there does not exist a uniformization of the entire moduli space; one first has to stratify the special fiber $\overline{\mathscr{S}} = \mathscr{S}\otimes \bar{\mathbb{F}}_p$ according to the Newton slopes of the isocrystal, and consider the rigid analytic tube over the unique closed stratum, the so-called basic locus $\overline{\mathscr{S}}^b$.
This tube $Sh^b$ is an open rigid analytic subvariety of $Sh_{\breve{E}_v}^{rig}$ (where $\breve{E}_v$ denotes the completion of the maximal unramified extension of $E_v$), and there is a uniformization
\begin{equation} \label{uniformization} Sh^b \cong I_{\phi}(\mathbb{Q}) \backslash \mathcal{M} \times G(\mathbb{A}_f^p)/K^p. \end{equation}
Here $\mathcal{M}$ is a so-called local Shimura variety (cf. \cite{Berkeley}) with an action of a $p$-adic Lie group $J_b(\mathbb{Q}_p)$, and $I_{\phi}(\mathbb{Q}) \subset J_b(\mathbb{Q}_p) \times G(\mathbb{A}_f^p)$ is a discrete subgroup.
One can write the right hand side of (\ref{uniformization}) as a finite disjoint union $$\coprod_g \Gamma_g \backslash \mathcal{M},$$ where $g$ runs through a set of representatives of the double coset $ I_{\phi}(\mathbb{Q}) \backslash G(\mathbb{A}_f^p) / K^p$.
For Shimura varieties of PEL type, this uniformization was introduced by Rapoport-Zink in the book \cite{RZ}, and later generalized to Shimura varieties of Hodge type (respectively abelian type) by Wansu Kim (respectively Xu Shen), cf. \cite{Wansu} and \cite{Xu}.
The local Shimura variety $\mathcal{M}$ can be defined as a moduli space of modifications of $G$-torsors on the Fargues-Fontaine curve (\cite{Berkeley}, Proposition 23.3.1), and thus the uniformization gives an interpretation of the rigid open subvariety $Sh^b$ of the $p$-adic moduli space in terms of group theory.

\subsection*{Moduli spaces of K3 surfaces}
\addcontentsline{toc}{subsection}{Moduli spaces of K3 surfaces}

In this paper, we study a Rapoport-Zink uniformization result for the moduli space of primitively polarized K3 surfaces, giving a similar group-theoretic description involving an explicit open subvariety of a local Shimura variety of orthogonal type.
The motive of a K3 surface is of abelian type, i.e. it lies in the subcategory of André motives generated by abelian varieties and Artin motives (cf. \cite{Andremotifs}, Théorème 7.1).
Therefore, even though the moduli space of polarized K3 surfaces is not itself a Shimura variety, it is still very closely related to a Shimura variety of abelian type, as we will now recall.

Let $n = q^r$ be a power of a prime $q \not=p$. For $n$ large enough and $2d$ not divisible by $p$, the moduli space of primitively polarized K3 surfaces of degree $2d$ with a level-$n$ structure (cf. Definition \ref{deflevelstructure}) will be representable by a smooth scheme $M_{2d, n, \mathbb{Z}_{(p)}} $ over $\mathbb{Z}_{(p)}$.

Denote by $\Lambda:= E_8(-1)^{\oplus 2} \oplus U^{\oplus 3}$ the K3 lattice, and $\lambda \in \Lambda$ an element with $\lambda^2 =2d$.
We will write $\Lambda_d := \langle \lambda \rangle^{\perp}$ for the complement of $\lambda$.
The analytic subspace $$\mathcal{D} = \{x \in \mathbb{P}(\Lambda_d \otimes \mathbb{C}) | \langle x,x\rangle = 0, \langle x,\overline{x} \rangle >0 \} \subset \mathbb{P}(\Lambda_d \otimes \mathbb{C})$$ is a Hermitian symmetric domain that can be viewed as the moduli space of Hodge structures of K3 type.
We define the arithmetic group $\mathcal{G}_n(\mathbb{Z})$ to be the subgroup of
$ \mathcal{G}(\mathbb{Z}) := \{ g \in SO(\Lambda) \, | \, g \lambda = \lambda\} $ of elements that reduce to the identity modulo $n$.
For $n \ge 3$, complex Hodge theory provides us with a period map $$\Phi: M_{2d, n, \mathbb{C}} \to \mathcal{G}_n(\mathbb{Z})\backslash \mathcal{D}$$
mapping a K3 surface to the Hodge structure on its second cohomology.
The right hand side is in fact a Shimura variety $Sh_{n,\mathbb{C}}$ attached to the reductive group $G := SO(\Lambda_{d, \mathbb{Q}})$ of signature $(2,19)$.
By the Torelli theorem for K3 surfaces (cf. \cite{Torelli}), the period map $\Phi$ is an open immersion that identifies $M_{2d,n, \mathbb{C}}$ with an algebraic open subvariety of $Sh_{n, \mathbb{C}}$.
One can explicitly determine the image of $\Phi$ as follows.
Denote by $\Delta(\Lambda_d) := \{\delta \in \Lambda_d \, | \, \delta^2=-2 \}$ the set of roots of the lattice $\Lambda_d$.
For $\delta \in \Delta(\Lambda_d)$ we can define the hypersurface $\delta^{\perp} \subset \mathbb{P}(\Lambda_d \otimes \mathbb{C})$.
The open analytic subspace $$\mathcal{D}^{\circ} = \mathcal{D} \setminus \bigcup_{\delta \in \Delta(\Lambda_d)} \delta^{\perp}$$ is the complement in $\mathcal{D}$ of what is called an arithmetic arrangement of hyperplanes.
Then the period map $\Phi$ induces a uniformization
$$M_{2d,n, \mathbb{C}} \cong \mathcal{G}_n(\mathbb{Z}) \backslash \mathcal{D}^{\circ}.$$
It is important to remark that in contrast to the Hermitian symmetric domain $\mathcal{D}$, the open subspace $\mathcal{D}^{\circ}$ is \emph{not} stable under the action of the real Lie group $G(\mathbb{R})$, but only under the discrete subgroup $\mathcal{G}(\mathbb{Z})$.

The main purpose of this paper is to establish a similar explicit $p$-adic uniformization for the moduli space of primitively polarized K3 surfaces with supersingular reduction.
The Shimura variety $Sh_{n,\mathbb{C}} = \mathcal{G}_n(\mathbb{Z})\backslash\mathcal{D}$ is naturally defined over the reflex field $E= \mathbb{Q}$.
It was first observed by Rizov (\cite{RizovCrelle}, Theorem 3.9.1) that the period map $\Phi$ descents to an open immersion of algebraic varieties $$\Phi: M_{2d,n, \mathbb{Q}} \hookrightarrow Sh_{n, \mathbb{Q}} $$ over $\mathbb{Q}$.
Provided that $p \nmid 2d$ and $n=q^r$ for a prime $q \not=p$, as we will assume throughout the paper, the Shimura variety $Sh_{n,\mathbb{Q}}$ has an integral canonical model $\mathscr{S}_{n}$ over $\mathbb{Z}_{(p)}$ as defined in \cite{Intmod},
and the period map extends to an open immersion
$$\varphi: M_{2d,n, \mathbb{Z}_{(p)}} \hookrightarrow \mathscr{S}_{n}$$ of schemes over $\mathbb{Z}_{(p)}$.
This integral extension of the period map was already used crucially by Madapusi-Pera in his proof of the Tate conjecture for K3 surfaces in odd characteristic \cite{MP}.

Together with the uniformization (\ref{uniformization}) for the tube over the basic locus of $\mathscr{S}_n$, it gives rise to a period map of rigid analytic spaces
\begin{equation} \label{padicper} \varphi: M_{2d,n, \breve{\mathbb{Q}}_{p}}^{ss} \hookrightarrow I_{\phi}(\mathbb{Q}) \backslash \mathcal{M} \times G(\mathbb{A}_f^p)/K_n^p = \coprod_g \Gamma_g \backslash \mathcal{M} \end{equation}
for the rigid analytic tube $M_{2d,n, \breve{\mathbb{Q}}_{p}}^{ss}$ over the supersingular locus $\overline{M}_{2d,ss,n} \subset M_{2d,n, \bar{\mathbb{F}}_{p}}$.

Here $\mathcal{M}$ is a local Shimura variety attached to the reductive group $G_{\mathbb{Q}_p} = SO(\Lambda_{d, \mathbb{Q}_p})$, and $I_{\phi}$ is the group of "quasi-isogenies" of a polarized supersingular K3 surface $(\mathbb{X}, \mathbb{L})$ over $\bar{\mathbb{F}}_p$, i.e. the special orthogonal group $SO(N_{d,\mathbb{Q}})$ attached to $N_{d, \mathbb{Q}} = \langle \mathbb{L}\rangle^{\perp} \subset NS(\mathbb{X})_{\mathbb{Q}}$.
For a detailed discussion of quasi-isogenies of K3 surfaces over $\bar{\mathbb{F}}_p$ we refer the reader to \cite{Isogenies}.
In order to obtain a Rapoport-Zink uniformization for $M_{2d,n, \breve{\mathbb{Q}}_{p}}^{ss}$, we compute the image of the period map (\ref{padicper}).
Recall that there is a $J_b(\mathbb{Q}_p)$-equivariant continuous specialization map $$sp: \left| \mathcal{M} \right| \to \left| X \right|$$ to an affine Deligne-Lusztig variety $X$ associated with $G_{\mathbb{Q}_p}$.
On the reduction, we have a period map of perfect schemes over $\bar{\mathbb{F}}_p$
\begin{equation}\label{redperiodmap}
\overline{\varphi}: \overline{M}_{2d,ss,n}^{perf} \hookrightarrow I_{\phi}(\mathbb{Q}) \backslash X \times G(\mathbb{A}_f^p)/K_n^p = \coprod_g \Gamma_g \backslash X. \end{equation}

For $g$ a representative of an element of $I_{\phi}(\mathbb{Q}) \backslash G(\mathbb{A}_f^p) / K^p$, we define a set of roots
$$ \Delta_g \subset \Delta(N_{d, \mathbb{Q}}) := \{\delta \in N_{d, \mathbb{Q}} \,| \, \delta^2 =-2\}$$
such that the natural action of $I_{\phi}(\mathbb{Q})$ on $\Delta(N_{d, \mathbb{Q}})$ restricts to an action of $\Gamma_g$ on $\Delta_g$.
For $\delta \in \Delta(N_{d, \mathbb{Q}})$ we introduce in Definition \ref{defZdelta}
a subset
$Z_{\delta} \subset X(\bar{\mathbb{F}}_p)$ and let $$X^{\circ}_g(\bar{\mathbb{F}}_p) := X(\bar{\mathbb{F}}_p) \setminus \bigcup_{\delta \in \Delta_g} Z_{\delta},$$
which is the set of $\bar{\mathbb{F}}_p$-points of an open subscheme $X_g^{\circ}\subset X$.
The action of $\Gamma_g$ on $X$ permutes the $Z_{\delta}$, and thus $\Gamma_g$ acts on $X^{\circ}_g$.

\begin{thm}[cf. Corollary \ref{unifordisj}]\label{introunifordisj}
Assume $p > 18d+4$.
The period map (\ref{redperiodmap}) induces a uniformization $$\overline{M}_{2d,n,ss}^{perf} \cong \coprod_g \Gamma_g \backslash X^{\circ}_g.$$
\end{thm}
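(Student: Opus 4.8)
The plan is to compute the image of the reduction $\overline{\varphi}$ of the period map (\ref{redperiodmap}) on each connected component $\Gamma_g \backslash X$ of the target, and to show that this image is precisely $\Gamma_g \backslash X^{\circ}_g$. Since $\overline{\varphi}$ is already known to be an open immersion of perfect schemes, it suffices to identify its image on $\bar{\mathbb{F}}_p$-points, and then — because both source and target are reduced perfect schemes and the map is an open immersion — the set-theoretic identification upgrades automatically to an isomorphism of schemes. So the whole argument takes place on $\bar{\mathbb{F}}_p$-points, mirroring exactly the complex story where $\mathcal{D}^\circ$ is cut out of $\mathcal{D}$ by deleting the hyperplanes $\delta^\perp$.

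The key steps, in order, are as follows. First, fix a supersingular point $x \in X(\bar{\mathbb{F}}_p)$ lying in the component indexed by $g$, i.e. a pair consisting of an abstract supersingular K3-type crystal/F-isocrystal structure together with the prime-to-$p$ tensor data recorded by $g$. Via the period map, such an $x$ corresponds to a potential polarized K3 surface over $\bar{\mathbb{F}}_p$ precisely when the associated (abstract) Néron-Severi lattice contains no $(-2)$-class that obstructs the polarization from being ample; this is the K3 analog of the "no rational curves" obstruction, and is exactly the positivity condition encoded in the definition of $Z_\delta$ in Definition \ref{defZdelta}. Concretely, for $\delta \in \Delta(N_{d,\mathbb{Q}})$, the locus $Z_\delta$ should be the set of $x \in X(\bar{\mathbb{F}}_p)$ for which $\delta$ (transported through the quasi-isogeny datum attached to $x$ and $g$) becomes an effective, hence polarization-destroying, class on the corresponding K3. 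The second step is therefore to verify that $x$ lies in the image of $\overline{\varphi}$ if and only if $x \notin \bigcup_{\delta \in \Delta_g} Z_\delta$: one direction uses that an actual K3 surface has an ample polarization, so none of the relevant $(-2)$-classes can be effective on it; the converse direction requires producing, from a point avoiding all the $Z_\delta$, an honest polarized supersingular K3 surface, which is where the surjectivity onto $\mathscr{S}_n$-points of the integral period map $\varphi$ (the Madapusi-Pera / Rizov input) combined with Ogus's or Rudakov-Shafarevich's results on ampleness for supersingular K3 surfaces enters. Third, one checks the action of $\Gamma_g$ is compatible: $\Gamma_g$ permutes the $Z_\delta$ according to its action on $\Delta_g$, so $X^\circ_g$ is $\Gamma_g$-stable and the quotient $\Gamma_g \backslash X^\circ_g$ makes sense and receives an isomorphism from the component of $\overline{M}_{2d,n,ss}^{perf}$ indexed by $g$. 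Summing over $g$ gives the stated disjoint-union uniformization. Throughout, the hypothesis $p > 18d+4$ is used to guarantee that the relevant lattices and $(-2)$-classes behave as in characteristic zero (no small-prime pathologies, CM-lifting and the Kuga-Satake construction behave well, and the crystalline Torelli theorem applies), and to ensure $M_{2d,n,\mathbb{Z}_{(p)}}$ is smooth with the period map an open immersion integrally.

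The main obstacle I expect is the converse direction of step two: showing that \emph{every} point of $X$ avoiding the hyperplane-type loci $Z_\delta$ actually comes from a polarized K3 surface, not merely from an abstract supersingular crystal with a Kuga-Satake-type structure. This requires knowing that the integral period map $\varphi: M_{2d,n,\mathbb{Z}_{(p)}} \hookrightarrow \mathscr{S}_n$ has image exactly the complement of the "bad" locus on the special fiber — i.e. an integral/supersingular refinement of the characteristic-zero computation of the image of $\Phi$. One must translate the ampleness criterion for a line bundle on a supersingular K3 surface (which, by Rudakov-Shafarevich and Ogus, is governed precisely by pairing positively against all effective $(-2)$-curves, and effectivity in turn is detected by Riemann-Roch on the supersingular Néron-Severi lattice) into the condition $x \notin \bigcup_\delta Z_\delta$ on the affine Deligne-Lusztig variety. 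Getting this translation exactly right — in particular matching the combinatorial set $\Delta_g$ with the classes that can actually be effective on the K3's produced by the uniformization, and checking that the specialization map $sp$ interacts correctly with the loci $Z_\delta$ so that the rigid-analytic statement (\ref{padicper}) follows from the statement on reductions — is the technical heart of the proof.
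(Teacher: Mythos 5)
Your overall strategy coincides with the paper's: reduce to identifying the image of $\overline{\varphi}$ on $\bar{\mathbb{F}}_p$-points (the open-immersion property then upgrades this to an isomorphism of perfect schemes), characterize membership in the image via $(-2)$-classes in the N\'eron--Severi lattice, and check $\Gamma_g$-equivariance of the loci $Z_\delta$. The "easy" direction is also as in the paper: if $\rho^{-1}(\delta)\in NS(X)$ for some root $\delta$, then by Riemann--Roch $\pm\rho^{-1}(\delta)$ is effective and orthogonal to $\xi$, so $\xi$ cannot be ample (Proposition \ref{criterionampleness}); and the translation between "$(x,g)\in C_\delta$" and "$\rho^{-1}(\delta)\in NS(X)$" is exactly Proposition \ref{NSintegral}, the integrality of $NS(X)$ inside the product of the integral \'etale and crystalline cohomologies.

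However, there is a genuine gap in the converse direction, which you correctly flag as the main obstacle but then resolve incorrectly. You propose to produce a polarized K3 surface at a point of $X^{\circ}_g$ from "the surjectivity onto $\mathscr{S}_n$-points of the integral period map $\varphi$ (the Madapusi-Pera / Rizov input)." The integral period map $\varphi: M_{2d,n,\mathbb{Z}_{(p)}}\hookrightarrow \mathscr{S}_n$ is an \emph{open immersion}, not a surjection, so it cannot supply points of the target; determining its image is precisely what is at stake. The paper's resolution passes through the moduli space of \emph{quasi-polarized} K3 surfaces: by Matsumoto's theorem (\cite{Matsumoto}, Theorem 4.1), for $p>18d+4$ the period map $\overline{M}^{*,perf}_{2d,ss,n}\to I_{\phi}(\mathbb{Q})\backslash X_{\mu}^G(b)\times G(\mathbb{A}_f^p)/K_n^p$ is surjective onto the entire basic locus --- the $p$-adic analogue of Kulikov's surjectivity in the complex case --- so \emph{every} point $[x,g]$ carries a quasi-polarized K3 surface, and one then only has to decide when the quasi-polarization is ample, which is what Propositions \ref{criterionampleness} and \ref{NSintegral} do. This detour through $\mathfrak{M}^{*}_{2d}$ is the essential missing step in your argument, and it is also the sole source of the hypothesis $p>18d+4$; your attribution of that bound to "small-prime pathologies" and good behaviour of the Kuga--Satake construction does not reflect its actual role.
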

Note that Theorem \ref{introunifordisj} gives a purely group-theoretic description of the underlying topological space of the supersingular locus $\overline{M}_{2d,n,ss}$.

Setting $\mathcal{M}_g^{\circ}:= sp^{-1}(X^{\circ}_g)$, an open rigid analytic subspace of $\mathcal{M}$, we obtain the desired Rapoport-Zink uniformization:

\begin{thm}[cf. Theorem \ref{padicuniformization}]\label{intropadicuniformization}
Assume $p > 18d+4$.
The moduli space of primitively polarized K3 surfaces of degree $2d$ with supersingular reduction admits a Rapoport-Zink type uniformization
$$ M_{2d,n, \breve{\mathbb{Q}}_{p}}^{ss} \cong \coprod_g \Gamma_g \backslash \mathcal{M}^{\circ}_g.$$
\end{thm}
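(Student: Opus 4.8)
The plan is to bootstrap the $p$-adic uniformization of Theorem~\ref{intropadicuniformization} from the uniformization of the reduction in Theorem~\ref{introunifordisj}, using the fact that taking the rigid analytic tube over a locally closed subset of a formal/integral model commutes with the specialization maps on both sides. More precisely, the starting point is the abstract Rapoport-Zink uniformization (\ref{uniformization}) for the basic (here: supersingular) locus of the integral canonical model $\mathscr{S}_n$, combined with the open immersion $\varphi: M_{2d,n,\mathbb{Z}_{(p)}} \hookrightarrow \mathscr{S}_n$ of Rizov and Madapusi-Pera. Restricting the rigid analytic period map (\ref{padicper}) to $M_{2d,n,\breve{\mathbb{Q}}_p}^{ss}$, I would show that its image inside $\coprod_g \Gamma_g\backslash\mathcal{M}$ is exactly $\coprod_g \Gamma_g \backslash \mathcal{M}_g^\circ$. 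Since $\varphi$ is already known to be an open immersion (being a restriction of an open immersion of rigid spaces), the only thing to check is the identification of the image as a \emph{set} of points, and for this one can argue on classical points, or equivalently on the underlying topological spaces.

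The key step is the compatibility of tubes with the specialization map. On the source side, the rigid tube $M_{2d,n,\breve{\mathbb{Q}}_p}^{ss}$ over $\overline{M}_{2d,ss,n}$ has a specialization map $sp_M: |M_{2d,n,\breve{\mathbb{Q}}_p}^{ss}| \to |\overline{M}_{2d,ss,n}|$, and passing to the perfection of the target does not change the underlying topological space, so $|\overline{M}_{2d,ss,n}^{perf}| = |\overline{M}_{2d,ss,n}|$. On the target side, $\mathcal{M}$ has the $J_b(\mathbb{Q}_p)$-equivariant specialization map $sp: |\mathcal{M}| \to |X|$ recalled in the excerpt, and by definition $\mathcal{M}_g^\circ = sp^{-1}(X_g^\circ)$. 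The period maps $\varphi$ (rigid) and $\overline{\varphi}$ (on perfect schemes) are compatible with these specialization maps: this is part of the functoriality of the Rapoport-Zink uniformization, since both are induced by the uniformization of $\mathscr{S}_n$ and its special fiber, and the specialization map for $\mathscr{S}_n^{rig}$ over $\overline{\mathscr{S}}_n^b$ is the one used to build $X$ as an affine Deligne-Lusztig variety. I would make this compatibility precise by a commutative square relating $sp_M$, $sp$, $\varphi$ and $\overline{\varphi}$, deduced from the corresponding square for $\mathscr{S}_n$ together with the fact that $\varphi$ respects both special and generic fibers.

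Granting that square, the computation is immediate: a classical point $x \in M_{2d,n,\breve{\mathbb{Q}}_p}^{ss}$ specializes to a point $\bar{x}$ of the supersingular locus, and by Theorem~\ref{introunifordisj} this $\bar{x}$ lies in the image of $\overline{\varphi}$, i.e. (after choosing the double-coset representative $g$) in $\Gamma_g\backslash X_g^\circ$; by the commutative square, $\varphi(x)$ then lies in $\Gamma_g\backslash sp^{-1}(X_g^\circ) = \Gamma_g\backslash \mathcal{M}_g^\circ$. Conversely, any classical point of $\Gamma_g\backslash\mathcal{M}_g^\circ$ specializes into $\Gamma_g\backslash X_g^\circ$, hence into the image of $\overline{\varphi}$, and since $M_{2d,n,\breve{\mathbb{Q}}_p}^{ss}$ is by construction the \emph{full} tube over $\overline{M}_{2d,ss,n}$ inside the tube over $\overline{\mathscr{S}}_n^b$ — i.e. $sp_{\mathscr{S}}^{-1}$ of the supersingular locus — such a point lies in $M_{2d,n,\breve{\mathbb{Q}}_p}^{ss}$. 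Thus the two open subspaces of $\coprod_g\Gamma_g\backslash\mathcal{M}$ have the same classical points and hence coincide, and the induced isomorphism $M_{2d,n,\breve{\mathbb{Q}}_p}^{ss} \cong \coprod_g \Gamma_g\backslash\mathcal{M}_g^\circ$ is the desired uniformization.

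The main obstacle is the bookkeeping around the specialization maps and the descent of the $\Gamma_g$-action: one must check that $sp^{-1}$ genuinely intertwines the $\Gamma_g$-action on $\mathcal{M}$ with the $\Gamma_g$-action on $X$ (which follows from $J_b(\mathbb{Q}_p)$-equivariance of $sp$ and the fact that $\Gamma_g \subset J_b(\mathbb{Q}_p)\times G(\mathbb{A}_f^p)$ acts through its first projection on $\mathcal{M}$ and on $X$), and that passing to the quotient is compatible with forming tubes, which is where the discreteness of $\Gamma_g$ — as opposed to working with the full $p$-adic group — is used, exactly as in the complex picture with $\mathcal{D}^\circ$. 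One also needs the hypothesis $p > 18d+4$ precisely because it is the hypothesis under which Theorem~\ref{introunifordisj} holds; no further restriction is introduced at this stage. Once the commutative square is in place, everything else is formal.
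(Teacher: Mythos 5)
Your proposal is correct and follows essentially the same route as the paper: the proof of Theorem \ref{padicuniformization} likewise observes that $M_{2d,n,\breve{\mathbb{Q}}_p}^{ss} = sp^{-1}(|\overline{M}_{2d,ss,n}|)$ because $\breve{M}_{2d,ss,n,W}$ is a formal open subscheme of the completion of $\mathscr{S}_{n,W}$ along the basic locus, then applies Corollary \ref{unifordisj} to identify $|\overline{M}_{2d,ss,n}|$ with $|\coprod_g \Gamma_g\backslash X_g^{\circ}|$ and concludes via $sp^{-1}(X_g^{\circ}) = \mathcal{M}_g^{\circ}$. The paper works directly with the underlying topological spaces rather than classical points, but the content of the argument is the same.
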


\begin{rem}
\begin{enumerate}[(i)]
\item
Similarly to the complex case, the open subspace $\mathcal{M}^{\circ}_g$ is not stable under the action of the $p$-adic Lie group $J_b(\mathbb{Q}_p)$, but only under the discrete subgroup $\Gamma_g \subset J_b(\mathbb{Q}_p)$.
\item
As opposed to the Rapoport-Zink uniformization in the case of Shimura varieties of abelian type, in the disjoint union $ \coprod_g \Gamma_g \backslash \mathcal{M}^{\circ}_g$ not only the group $\Gamma_g$, but also the uniformizing space $\mathcal{M}^{\circ}_g$ depends on $g$.
\end{enumerate}
\end{rem}

\subsection*{Moduli spaces of cubic fourfolds}
\addcontentsline{toc}{subsection}{Moduli spaces of cubic fourfolds}

We briefly sketch how the same methods can be applied to obtain a Rapoport-Zink uniformization of the moduli space of smooth cubic fourfolds with supersingular reduction.

In this case, the lattice $\Lambda$ is the cohomology lattice corresponding to the $H^4$ of a cubic fourfold, and $\Lambda_0 := \langle \lambda \rangle^{\perp} \subset \Lambda$ the orthogonal complement of the square of the class of a hyperplane section.
The complex period map
$$\Phi: M_{cf,n, \mathbb{C}} \to \mathcal{G}_n(\mathbb{Z})\backslash \mathcal{D}$$
goes to a quotient of a homogeneous space $\mathcal{D}$ for the group $G(\mathbb{R})$ of signature $(2,20)$, where $G:= SO(\Lambda_{0, \mathbb{Q}})$.
Voisin showed in \cite{VoisinTorelli} that this period map is an open immersion.
As in (\cite{Laza}, Definition 2.16) one defines a set of roots $\Delta(\Lambda_0)$ and a set of long roots $\Delta_{long}(\Lambda_0)$ of $\Lambda_0$.

We set
$$\mathcal{D}_6 = \bigcup_{\delta \in \Delta(\Lambda_0)} \delta^{\perp} \subset \mathcal{D}$$
and
$$ \mathcal{D}_2 = \bigcup_{\delta \in \Delta_{long}(\Lambda_0)} \delta^{\perp} \subset \mathcal{D},$$
and define $\mathcal{D}^{\circ} := \mathcal{D} \setminus (\mathcal{D}_6 \cup \mathcal{D}_2)$.

Then as conjectured by Hassett in (\cite{Hassett}, §4.3), Laza and Loojenga in \cite{Laza} and \cite{Looijenga} computed that the image of the period map is precisely $\mathcal{D}^{\circ}$, giving an isomorphism
$$M_{cf, n, \mathbb{C}} \overset{\sim}{\to} \mathcal{G}_n(\mathbb{Z}) \backslash \mathcal{D}^{\circ}.$$

As in the K3 case, there is a $p$-adic period map for the tube over the supersingular locus
$$ M_{cf, n, \breve{\mathbb{Q}}_p}^{ss} \hookrightarrow I_{\phi}(\mathbb{Q})\backslash \mathcal{M} \times G(\mathbb{A}_f^p)/K_n^p= \coprod_g \Gamma_g \backslash \mathcal{M},$$
where $\mathcal{M}$ is a local Shimura variety attached to the special orthogonal group $G_{\mathbb{Q}_p} = SO(\Lambda_{0, \mathbb{Q}_p})$.
Here $I_{\phi}$ is the group $I_{\phi} = SO(CH^2_{0,\mathbb{Q}})$, where for a supersingular cubic fourfold $\mathbb{X}$ over $\bar{\mathbb{F}}_p$ we denote by $CH^2_{0, \mathbb{Q}}:= \langle h^2\rangle^{\perp} \subset CH^2_{\mathbb{Q}}:=CH^2(\mathbb{X})_{\mathbb{Q}}$ the complement of the square of a hyperplane section in the (rational) Chow group of codimension two.

For $g \in G(\mathbb{A}_f^p)$, we introduce the $\mathbb{Z}[\frac{1}{p}]$-lattice $CH^2_{0,g} := CH^2_{0,\mathbb{Q}} \cap g\Lambda_{0,\hat{\mathbb{Z}}^p}$, which depends only on the class of $g$ in the double coset $I_{\phi}(\mathbb{Q}) \backslash G(\mathbb{A}_f^p) / K_n^p$.
We define a set of roots $\Delta(CH^2_{0,g})$ and a set of long roots $\Delta_{long}(CH^2_{0,g})$.

As above, for $\delta \in CH^2_{0,g}$ there is the definition of a subset $Z_{\delta} \subset X(\bar{\mathbb{F}}_p)$.
We define
\begin{eqnarray*} Z_{6,g} := \bigcup_{\delta \in \Delta(CH^2_{0,g})} Z_{\delta}
& \textnormal{and} & Z_{2,g} := \bigcup_{\delta \in \Delta_{long}(CH^2_{0,g})} Z_{\delta} \end{eqnarray*}
and the open subset
$$ X^{\circ}_g(\bar{\mathbb{F}}_p) := X(\bar{\mathbb{F}}_p) \setminus ( Z_{6,g} \cup Z_{2,g}). $$
Denote by $\mathcal{M}^{\circ}_g := sp^{-1}(X^{\circ}_g)$ the open rigid analytic subvariety of the local Shimura variety $\mathcal{M}$ defined as the preimage of $X^{\circ}_g$ under the specialization map.

\begin{thm}[cf. Theorem \ref{cfuniformization}]\label{introcfuniformization}
For almost all $p$, the moduli space of smooth cubic fourfolds with supersingular reduction admits a uniformization
$$M_{cf, n, \breve{\mathbb{Q}}_p}^{ss} \cong \coprod_g \Gamma_g \backslash \mathcal{M}^{\circ}_g$$
of rigid spaces over $\breve{\mathbb{Q}}_p$, as well as of perfect schemes
$$\overline{M}_{cf,ss,n}^{perf} \cong \coprod_g \Gamma_g \backslash X^{\circ}_g $$
over $\bar{\mathbb{F}}_p$.
\end{thm}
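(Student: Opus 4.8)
The plan is to run the cubic-fourfold case in exact parallel to the K3 case (Theorems \ref{introunifordisj} and \ref{intropadicuniformization}), and the write-up should be essentially a dictionary translation. First I would establish the algebraic ingredients: a fine moduli scheme $M_{cf,n,\mathbb{Z}_{(p)}}$ of smooth cubic fourfolds of dimension $4$ with level-$n$ structure for $n = q^r$, $q \neq p$, $n$ large, and the fact that the $H^4$-motive of a cubic fourfold is of abelian type so that (after Madapusi-Pera-type arguments as in the K3 case) the complex period immersion $\Phi \colon M_{cf,n,\mathbb{C}} \hookrightarrow Sh_{n,\mathbb{C}}$ descends to $\mathbb{Q}$ and extends to an open immersion $\varphi \colon M_{cf,n,\mathbb{Z}_{(p)}} \hookrightarrow \mathscr{S}_n$ into the integral canonical model of the $\mathrm{SO}(\Lambda_{0,\mathbb{Q}})$-Shimura variety with hyperspecial level at $p$. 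This step is where I expect the most genuinely new verification: one must check that the requisite integral comparison results (the de Rham–crystalline period comparison, the Kuga–Satake-type construction attaching an abelian variety, and the resulting extension of $\varphi$) hold for cubic fourfolds under the hypothesis ``almost all $p$''. In the literature this is available for K3s via Madapusi-Pera; for cubic fourfolds one can invoke the relation to K3-type Hodge structures on primitive $H^4$, or cite the analogous results directly, but the paper should spell out precisely which references furnish this and for which $p$ it is unconditional.

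Second, I would import the basic-locus Rapoport-Zink uniformization (\ref{uniformization}) for $\mathscr{S}_n$ and compose with $\varphi$ to obtain the $p$-adic period map $M_{cf,n,\breve{\mathbb{Q}}_p}^{ss} \hookrightarrow I_\phi(\mathbb{Q}) \backslash \mathcal{M} \times G(\mathbb{A}_f^p)/K_n^p$, together with its reduction $\overline{\varphi} \colon \overline{M}_{cf,ss,n}^{perf} \hookrightarrow \coprod_g \Gamma_g \backslash X$ on affine Deligne-Lusztig varieties, exactly as in (\ref{padicper}) and (\ref{redperiodmap}); here $I_\phi = \mathrm{SO}(CH^2_{0,\mathbb{Q}})$ plays the role that $\mathrm{SO}(N_{d,\mathbb{Q}})$ played for K3s, the identification coming from the supersingular crystalline Torelli/Chow-group description. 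The content of the theorem is then the determination of the image, which reduces to a pointwise statement on $\bar{\mathbb{F}}_p$-points: a point of $X^\circ_g(\bar{\mathbb{F}}_p)$ corresponds to a smooth cubic fourfold precisely when the associated ``period lattice'' contains no $(-2)$-class that is effective-and-contractible. On the complex side Laza and Looijenga show that the boundary divisors $\mathcal{D}_6$ (from roots) and $\mathcal{D}_2$ (from long roots) correspond respectively to the Hassett divisor of cubics with extra algebraic classes and to the chordal/secant locus; the $p$-adic proof must reproduce this trichotomy characteristic-by-characteristic, showing that $Z_{6,g}$ and $Z_{2,g}$ are exactly the loci in $X$ that fail to be represented by a smooth cubic — this is the translation, via the crystalline period map and the theory of supersingular crystals, of the geometric classification of degenerations of cubic fourfolds.

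The main obstacle, then, is this last identification of the excluded loci $Z_{6,g} \cup Z_{2,g}$ with the non-smooth locus: in the K3 case (Theorem \ref{introunifordisj}) one has the Torelli theorem for supersingular K3 surfaces and a clean dictionary between $(-2)$-classes in the Néron-Severi lattice and exceptional curves, whereas for cubic fourfolds the analogue requires the finer analysis of which $(-2)$-classes (all roots versus only long roots) obstruct smoothness, mirroring Laza--Looijenga. I would handle it by reducing to the complex picture: the integral period map $\varphi$ identifies the generic fibre $M_{cf,n,\breve{\mathbb{Q}}_p}$ with an open subscheme of $\mathscr{S}_{n,\breve{\mathbb{Q}}_p}$ whose complement is cut out (after base change to $\mathbb{C}$) by $\mathcal{D}_6 \cup \mathcal{D}_2$; flat/proper base-change and specialization of this complement along the integral model pins down its reduction, which one then matches with $\bigcup_{\delta} Z_\delta$ using that $Z_\delta$ is by construction (Definition \ref{defZdelta} in the K3 case, and its cubic-fourfold analogue) the specialization of $\delta^\perp$. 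Once the pointwise statement holds, the isomorphism of perfect schemes follows as in Corollary \ref{unifordisj}, and passing to rigid-analytic tubes via $\mathcal{M}^\circ_g = sp^{-1}(X^\circ_g)$ and the specialization map $sp \colon |\mathcal{M}| \to |X|$ gives the rigid uniformization exactly as in Theorem \ref{padicuniformization}. The ``almost all $p$'' hypothesis absorbs both the explicit bound needed to make the supersingular/crystalline Torelli arguments work (the analogue of $p > 18d+4$) and the finitely many primes of bad reduction for the moduli problem and the integral model.
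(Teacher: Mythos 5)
Your overall architecture coincides with the paper's: establish the integral open immersion $\varphi: M_{cf,n,\mathbb{Z}_{(p)}} \hookrightarrow \mathscr{S}_n$, reduce the computation of the image to the complex theorem of Laza--Looijenga by showing that for almost all $p$ the complement of $M_{cf,n,\mathbb{Z}_{(p)}}$ in $\mathscr{S}_n$ equals the union of the integral canonical models $\mathscr{S}_2$, $\mathscr{S}_6$ of the special sub-Shimura varieties attached to roots and long roots (both being flat closed subschemes with generic fibers identified by the complex computation), apply Rapoport--Zink uniformization to these subvarieties, and pass to the rigid statement via $sp^{-1}$. This is exactly the strategy of Section \ref{spimage} that the paper's proof invokes, so in that sense the route is the same.

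There is, however, one step you dispose of too quickly, and it is the technical heart of the argument: the identification of the basic locus of $\mathscr{S}_2 \cup \mathscr{S}_6$ with $Z_{6,g} \cup Z_{2,g}$ inside the affine Deligne--Lusztig variety. This is not true ``by construction'': $Z_{\delta}$ is defined by the integrality condition $x^{-1}(\delta) \in \Lambda_{0,W}$, not as a specialization of the hyperplane $\delta^{\perp}$, and flatness of the integral model only pins down the complement as an abstract closed subscheme, not its group-theoretic description. The paper proves the identification in two directions. First, the generic Hodge cycle $\delta$ over $Sh_{\delta,n}$ extends (Néron model property) to a special endomorphism of the universal Kuga--Satake abelian variety, whose crystalline and étale realizations are transported through a $GSpin$-isogeny to a root or long root of $CH^2_{0,g}$ via the cubic-fourfold analogue of Theorem \ref{Madapusi-Pera}, namely (\cite{MP}, Theorem 5.14(2)); this gives the containment of the basic locus of $\mathscr{S}_{\delta,n}$ in $Z_{\delta}$. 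Conversely, a point of $Z_{\delta}$ carries an integral special endomorphism of the Kuga--Satake abelian variety, and one must lift it to characteristic $0$ along a flat component of the deformation space, using (\cite{MP2}, Corollary 8.15), to conclude that the point actually lies on $\overline{\mathscr{S}}_{\delta,n}^{b}$ — i.e.\ that every point excluded from $X^{\circ}_g$ genuinely fails to be the period of a smooth cubic. Without this lifting result your ``matching'' step is unproven in one direction; with it, the rest of your sketch (including the final passage to $\mathcal{M}^{\circ}_g = sp^{-1}(X^{\circ}_g)$ as in Theorem \ref{padicuniformization}) goes through as written.
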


\begin{ac}
The author would like to thank Laurent Fargues and Bruno Klingler for many helpful discussions and suggestions, and Sebastian Bartling for his comments on this text.
\end{ac}

\section{Moduli spaces of K3 surfaces}

We survey basic facts on K3 surfaces and their cohomology in characteristic $0$ and $p$. In addition, we recall how to define fine moduli spaces of polarized K3 surfaces with level structure. Relevant references are \cite{K3global}, \cite{MP} and \cite{Liedtke}.

\subsection{K3 surfaces and their cohomology}\label{sectioncoho}

Let $k$ be a field.

\begin{mydef}\label{K3def}
A \emph{K3 surface} over $k$ is a smooth projective surface $X$ over $k$ such that $\Omega_X^2 \cong \mathcal{O}_X $ and $H^1(X, \mathcal{O}_X)=0$.
A \emph{polarization} (resp. \emph{quasi-polarization}) of $X$ is a line bundle $\xi \in \mathrm{Pic}(X)$ which is ample (resp. big and nef).
The polarization (resp. quasi-polarization) $\xi$ is called \emph{primitive} if it is not a non-trivial multiple of some
element in $\mathrm{Pic(X)}$.
\end{mydef}

In this paper, we will assume all (quasi-)polarizations to be primitive.

The even, unimodular lattice $\Lambda:= E_8(-1)^{\oplus 2} \oplus U^{\oplus 3}$ of signature $(3,19)$ is called the K3 lattice.
Given a complex algebraic K3 surface $X$, there exists an isometry between the second Betti cohomology group $H_B^2(X, \mathbb{Z})$ together with its intersection form and the K3 lattice $\Lambda$ (cf. \cite{K3global}, Proposition 1.3.5).
Similarly, for a K3 surface $X$ over an arbitrary algebraically closed field of characteristic different from $\ell$, the étale cohomology groups $H^2_{\acute{e}t}(X, \mathbb{Z}_{\ell})$ are free of rank 22, and together with their intersection form are isometric to the lattice $\Lambda_{\mathbb{Z}_{\ell}}:=\Lambda \otimes \mathbb{Z}_{\ell}$.
If $X$ is a K3 surface over a perfect field $k$ of positive characteristic, then the crystalline cohomology $H^2_{cris}(X/W(k))$ is free of rank 22 and isometric to $\Lambda_{W(k)}:= \Lambda \otimes W(k)$.

Let $\lambda := e_1 + d f_1$ where $(e_1, f_1)$ denotes a basis of the first copy of the hyperbolic lattice $U$ in $\Lambda$, so that $\lambda^2=2d$. For $\xi$ a polarization of a complex K3 surface $X$ of degree $2d$, there exists an isometry between the lattices $H_B^2(X, \mathbb{Z})$ and $\Lambda$ mapping the Chern class $c_B(\xi) \in H_B^2(X, \mathbb{Z})$ to $\lambda$ (cf. \cite{K3global}, Corollary 14.1.10), and similarly for étale and crystalline cohomology.
We denote by $\Lambda_d := \langle\lambda\rangle^{\perp}\subset \Lambda$ the orthogonal complement of $\lambda$.
Hence $\Lambda_d \cong P_{B}^2(X, \mathbb{Z})$, where we use the notation $P_B^2(X,\mathbb{Z}) := \langle c_B(\xi) \rangle^{\perp} \subset H^2_B(X, \mathbb{Z})$ for the primitive cohomology, and similarly for $P_{\acute{e}t}^2(X, \mathbb{Z}_{\ell})$ and $P^2_{cris}(X/W(k))$.

Define $G$ to be the reductive group over $\mathbb{Q}$ given by $SO(\Lambda_{d, \mathbb{Q}})$. Then $G_{\mathbb{R}}$ can be identified with the special orthogonal group $SO_{2,19}$.
A natural integral model for $G$ is the group scheme $\mathcal{G}$ over $\mathbb{Z}$ with generic fiber $\mathcal{G}_{\mathbb{Q}}=G$ whose $R$-points are given by
\begin{equation}\label{integralgroup} \mathcal{G}(R) = \{ g \in SO(\Lambda \otimes R) \, | \, g \lambda = \lambda\}. \end{equation}
It is important to remark that this is not just the $\mathbb{Z}$-group scheme $SO(\Lambda_d)$, the reason being that it is not always possible to extend a self-isometry of $\Lambda_d$ to one of $\Lambda$ which fixes $\lambda$.
We will also need the corresponding orthogonal group $O:= O(\Lambda_{d, \mathbb{Q}})$ with integral model
\begin{equation}\label{orthgroup} \mathcal{O}(R) = \{ g \in O(\Lambda \otimes R) \, | \, g \lambda = \lambda\}. \end{equation}

\subsection{Moduli spaces}\label{modspace}

In order to define moduli spaces, we need the following relative version of K3 surfaces over a general base scheme.

\begin{mydef}[\cite{MP}, 2.1] We define a \emph{K3 surface over a scheme} $S$ to be a smooth proper morphism of algebraic spaces $f: X \to S$ such that the fiber over any geometric point $\overline{s} \to S$ is a K3 surface in the sense of Definition \ref{K3def}.
A \emph{primitive polarization} (resp. \emph{primitive quasi-polarization}) of $f: X \to S$ is a global section $\xi \in \mathrm{Pic}_{X/S}(S)$ such that for every geometric point $\overline{s} \to S$ the fiber $\xi_{\overline{s}}$ is a primitive polarization (resp. primitive quasi-polarization).
\end{mydef}

\begin{rem}
We check the condition to be a primitive polarization on geometric points, since a polarization which is primitive over an arbitrary field could fail to be primitive over a finite extension.
\end{rem}

Denote by $\mathfrak{M}_{2d}$ the moduli problem that attaches to each $\mathbb{Z}[\frac{1}{2}]$-scheme $S$ the groupoid of pairs $(f: X \to S, \xi)$ consisting of a K3 surface $f: X\to S$ and a primitive polarization $\xi$ of degree $2d$.
Similarly, we denote by $\mathfrak{M}_{2d}^{*}$ the moduli problem attaching to each $\mathbb{Z}[\frac{1}{2}]$-scheme $S$ the groupoid of K3 surfaces over $S$ together with a primitive quasi-polarization of degree $2d$.

\begin{prop}[\cite{Rizovmoduli}; \cite{MP}, Proposition 2.2]
Both $\mathfrak{M}_{2d}$ and $\mathfrak{M}^{*}_{2d}$ are Deligne-Mumford stacks of finite type over $\mathbb{Z}[\frac{1}{2}]$, and $\mathfrak{M}_{2d}$ is separated.
The natural map $\mathfrak{M}_{2d} \to \mathfrak{M}^{*}_{2d}$ is an open immersion and the complement is of pure (relative) codimension one.
\end{prop}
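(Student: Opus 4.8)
The plan is to realize both moduli problems as global quotient stacks attached to a Hilbert scheme, and then verify the remaining assertions one by one.

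First I would fix an integer $m \ge 3$, so that for every primitively polarized K3 surface $(X,\xi)$ of degree $2d$ over an algebraically closed field of characteristic $\ne 2$ the line bundle $\xi^{\otimes m}$ is very ample with no higher cohomology (the Saint-Donat--Mayr bounds, together with Riemann--Roch), giving a closed embedding $X \hookrightarrow \mathbb{P}^{N}$ with a Hilbert polynomial depending only on $d$ and $m$. Inside the corresponding Hilbert scheme over $\mathbb{Z}[\tfrac12]$ one cuts out the locally closed subscheme $H$ of those subschemes that are K3 surfaces with $\mathcal{O}(1)$ equal to the $m$-th power of a primitive polarization; the conditions of being smooth, of having trivial canonical bundle and vanishing $H^1(\mathcal{O})$, and of $\mathcal{O}(1)$ being an $m$-th power, are all locally closed, and boundedness of the Hilbert scheme yields finite type. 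Then $\mathfrak{M}_{2d} \cong [H/\mathrm{PGL}_{N+1}]$, and the same construction with ``ample'' weakened to ``big and nef'' gives $\mathfrak{M}_{2d}^{*}$. Since an ample --- more generally a big and nef --- class on a K3 surface is fixed by only finitely many automorphisms (they act faithfully on $H^2$ with image inside the finite group of lattice isometries fixing a class of positive square), the stabilizer groups are finite, so both stacks are Deligne--Mumford; this is the argument of Rizov and of \cite{MP}, Proposition 2.2.

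Separatedness of $\mathfrak{M}_{2d}$ I would check by the valuative criterion: for a discrete valuation ring $R$ with fraction field $K$, two primitively polarized K3 surfaces over $R$ that are isomorphic over $K$ must be isomorphic over $R$. This is where ampleness is used: a smooth proper family of surfaces over $R$ with K3 geometric fibres and a relatively ample line bundle is its own relative canonical model, hence unique, so the two $R$-models agree (a Matsusaka--Mumford type uniqueness statement, or relative minimal model theory applied to the total space; cf. \cite{MP}). The same reasoning fails for $\mathfrak{M}_{2d}^{*}$, since two big-and-nef models over $R$ can differ by flopping $(-2)$-curves in the special fibre. For the open immersion $\mathfrak{M}_{2d} \hookrightarrow \mathfrak{M}_{2d}^{*}$: the morphism is representable, and over any quasi-polarized family $(X/S,\xi)$ the locus in $S$ over which $\xi$ is ample is open, by the fibrewise Nakai--Moishezon criterion; this locus is precisely the fibre product with $\mathfrak{M}_{2d}$.

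It remains to see that the complement is of pure relative codimension one. I would argue locally around a geometric point $(X_0,\xi_0)$ of the complement: since $\xi_0$ is big and nef but not ample, Nakai's criterion produces an irreducible curve $C\subset X_0$ with $\xi_0\cdot C=0$, necessarily a smooth rational $(-2)$-curve, so that $\delta := [C]$ is a $(-2)$-class orthogonal to $\xi_0$. In the formally smooth universal deformation space of the pair $(X_0,\xi_0)$, the sublocus along which $\delta$ stays algebraic --- equivalently of Hodge type $(1,1)$, or of the appropriate filtration level in the crystalline picture --- is cut out by a single equation, because the relevant graded piece (complex-analytically, $H^{2,0}$) is one-dimensional, so forcing one further class to be algebraic is exactly one condition; hence it is a relative Cartier divisor, and near $(X_0,\xi_0)$ the non-ample locus is a finite union of such divisors. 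This yields purity of codimension one. The main obstacle is precisely this last step over a mixed-characteristic base: one needs the crystalline/de Rham counterpart of the classical fact that algebraicity of a $(-2)$-class on a K3 surface is a codimension-one condition. This can be obtained either from the integral period map of Madapusi-Pera combined with the known description of the non-ample locus as a union of hyperplane sections in the complex case, or directly from the Kodaira--Spencer map of the pair together with the Gauss--Manin connection.
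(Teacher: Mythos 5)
The paper does not prove this proposition; it is quoted from Rizov and from \cite{MP}, Proposition 2.2, so your proposal has to be measured against the arguments in those references. For the polarized stack $\mathfrak{M}_{2d}$ your outline is essentially the standard one and is sound: Saint-Donat gives very ampleness of $\xi^{\otimes 3}$, the Hilbert-scheme quotient gives algebraicity and finite type, Matsusaka--Mumford gives separatedness (K3 surfaces are not ruled), and openness of ampleness in families gives the open immersion. Two smaller points: for the Deligne--Mumford property what you actually need is that the automorphism \emph{group scheme} is unramified, not merely that the group of geometric points is finite; finiteness of the lattice stabilizer does not by itself rule out infinitesimal automorphisms, and the real input is $H^0(X,T_X)=0$ in every characteristic (Rudakov--Shafarevich in characteristic $p$). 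Your codimension-one argument is also morally correct, and the mixed-characteristic input you are looking for is precisely Deligne's theorem that $\mathrm{Def}(X,L)\subset\mathrm{Def}(X)$ is a Cartier divisor, because the obstruction to deforming a line bundle lives in the one-dimensional $H^2(X,\mathcal{O}_X)$; combined with Riemann--Roch (which makes the persisting $(-2)$-class effective and orthogonal to $\xi$) this gives purity.

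The genuine gap is the sentence ``the same construction with `ample' weakened to `big and nef' gives $\mathfrak{M}_{2d}^{*}$.'' This step fails: for a quasi-polarization $\xi$ that is not ample, no power $\xi^{\otimes m}$ is very ample --- the associated morphism contracts exactly the $(-2)$-curves orthogonal to $\xi$ onto the canonical model with ADE singularities --- so there is no embedding of $X$ into a fixed projective space and no locally closed locus in the Hilbert scheme whose quotient presents $\mathfrak{M}_{2d}^{*}$. This is also why the paper's Definition of a K3 surface over a base takes $f:X\to S$ to be an algebraic space: a quasi-polarized family need not be projective even Zariski-locally. The cited sources get around this either by parametrizing the contracted image $\overline{X}\subset\mathbb{P}^N$ together with its crepant resolution (Maulik's argument, invoked in \cite{MP}), or by verifying Artin's representability criteria directly; some such device is needed, and it is also the source of the non-separatedness of $\mathfrak{M}_{2d}^{*}$ that you correctly describe via flops.
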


\begin{rem}
The stack $\mathfrak{M}^{*}_{2d}$ is not separated, cf. (\cite{K3global}, 5.1.4).
\end{rem}

For every prime $\ell$, the relative étale cohomology of the universal object $f:\mathfrak{X} \to \mathfrak{M}_{2d}$ gives rise to an $\ell$-adic étale local system
$$ \mathcal{H}_{\ell} := R^2f_* \underline{\mathbb{Z}}_{\ell, \mathfrak{X}}(1)$$
of rank $22$ over $\mathfrak{M}_{2d, \mathbb{Z}[\frac{1}{2\ell}]}$, where $(1)$ denotes a Tate twist.
Furthermore, there is a perfect Poincaré pairing
$$ \langle -, - \rangle: \mathcal{H}_{\ell} \times \mathcal{H}_{\ell} \to \underline{\mathbb{Z}}_{\ell}.$$
The Chern class of the universal polarization $\mathfrak{L}$ gives rise to a global section $c_{\ell}(\mathfrak{L})$ of $\mathcal{H}_{\ell}$ which satisfies
$$\langle c_{\ell}(\mathfrak{L}) , c_{\ell}(\mathfrak{L})\rangle = 2d.$$

Choose a prime number $p \ge 3$.
We can form the $\hat{\mathbb{Z}}^p$-local system $\mathcal{H}_{\hat{\mathbb{Z}}^p} := \prod_{\ell \not=p} \mathcal{H}_{\ell}$ over $\mathfrak{M}_{2d, \mathbb{Z}_{(p)}}$. The Chern classes $c_{\ell}(\mathfrak{L})$ assemble into a Chern class $c_{\hat{\mathbb{Z}}^p}(\mathfrak{L})$ in
$\mathcal{H}_{\hat{\mathbb{Z}}^p}$.

We let $\mathcal{I}^p$ be the étale sheaf over $\mathfrak{M}_{2d, \mathbb{Z}_{(p)}}$
whose sections over any scheme $T\to \mathfrak{M}_{2d, \mathbb{Z}_{(p)}}$ are given by
$$\mathcal{I}^p(T) = \left\{\eta: \Lambda\otimes \underline{\hat{\mathbb{Z}}}^p_{T} \overset{\sim}{\rightarrow} \mathcal{H}_{\hat{\mathbb{Z}}^p, T} \textnormal{\, such that \,} \eta(\lambda)= c_{\hat{\mathbb{Z}}^p}(\mathfrak{L})\right\},$$ i.e. the sheaf of isometries between $\mathcal{H}_{\hat{\mathbb{Z}}^p, T}$ and the constant sheaf attached to the K3 lattice $\Lambda_{\hat{\mathbb{Z}}^p}$.
The sheaf $\mathcal{I}^p$ has a natural right action by the constant sheaf of groups
$\mathcal{O}(\hat{\mathbb{Z}}^p)$ as defined in (\ref{orthgroup}) in Section \ref{sectioncoho}.
For a number $n$ coprime to $p$ we define the congruence subgroup $$\mathcal{O}_n(\hat{\mathbb{Z}}^p) := \left\{g \in \mathcal{O}(\hat{\mathbb{Z}}^p) |\, g \equiv 1 \mod n \right\}.$$
A section $\eta \in H^0(T, \mathcal{I}^p/\mathcal{O}_n(\hat{\mathbb{Z}}^p))$ is called a \emph{level-$n$ structure} over $T$.

\begin{mydef}[\cite{MP}, 2.10]\label{deflevelstructure} We define the stack $\mathfrak{M}_{2d, n, \mathbb{Z}_{(p)}}$ as the relative moduli problem over $\mathfrak{M}_{2d, \mathbb{Z}_{(p)}}$ that attaches to $T \to \mathfrak{M}_{2d, \mathbb{Z}_{(p)}} $ the set of level-$n$ structures over $T$.
\end{mydef}

\begin{prop}[\cite{Rizovmoduli}; \cite{MP}, Proposition 2.11]
The stack $\mathfrak{M}_{2d, n, \mathbb{Z}_{(p)}}$ is finite étale over $\mathfrak{M}_{2d, \mathbb{Z}_{(p)}}$. For $n$ large enough, it admits a fine moduli space $M_{2d, n, \mathbb{Z}_{(p)}}$ which is a scheme over $\mathbb{Z}_{(p)}$.
Moreover, it is smooth if $p \nmid 2d$.
\end{prop}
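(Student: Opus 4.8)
The plan is to prove the three assertions in turn, reducing each to lattice theory or to deformation theory. For the finite étaleness, note that by construction $\mathfrak{M}_{2d,n,\mathbb{Z}_{(p)}} \to \mathfrak{M}_{2d, \mathbb{Z}_{(p)}}$ is the étale sheaf $\mathcal{I}^p/\mathcal{O}_n(\hat{\mathbb{Z}}^p)$ viewed as a space over $\mathfrak{M}_{2d, \mathbb{Z}_{(p)}}$. By the lattice statements recalled in Section \ref{sectioncoho}, the pair $(\mathcal{H}_{\hat{\mathbb{Z}}^p}, c_{\hat{\mathbb{Z}}^p}(\mathfrak{L}))$ becomes isometric to $(\Lambda_{\hat{\mathbb{Z}}^p}, \lambda)$ étale-locally on $\mathfrak{M}_{2d, \mathbb{Z}_{(p)}}$, so $\mathcal{I}^p$ is an étale-locally trivial torsor under the sheaf of groups $\mathcal{O}(\hat{\mathbb{Z}}^p)$, and hence $\mathcal{I}^p/\mathcal{O}_n(\hat{\mathbb{Z}}^p)$ is an étale-locally trivial torsor under the constant group scheme attached to the finite group $\mathcal{O}(\hat{\mathbb{Z}}^p)/\mathcal{O}_n(\hat{\mathbb{Z}}^p)$, which embeds into $\mathcal{O}(\mathbb{Z}/n\mathbb{Z})$. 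A torsor under a finite constant group scheme is finite étale, which gives the first claim.

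For the representability, I would first note that, being finite étale over the separated Deligne--Mumford stack $\mathfrak{M}_{2d, \mathbb{Z}_{(p)}}$ of finite type, the stack $\mathfrak{M}_{2d,n,\mathbb{Z}_{(p)}}$ is again a separated Deligne--Mumford stack of finite type over $\mathbb{Z}_{(p)}$; what remains is to kill its automorphisms. For a primitively polarized K3 surface $(X,\xi)$ over an algebraically closed field of characteristic $\ne p$, the group $\mathrm{Aut}(X,\xi)$ is finite and acts faithfully on $\mathcal{H}_{\hat{\mathbb{Z}}^p}$ fixing the Chern class of $\xi$ (faithfulness of the action on torsion prime-to-$p$ cohomology being a known fact, cf. \cite{MP}), hence it embeds into $\mathcal{O}(\hat{\mathbb{Z}}^p)$. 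A Minkowski-type argument shows that for $n$ large enough the subgroup $\mathcal{O}_n(\hat{\mathbb{Z}}^p) = \ker(\mathcal{O}(\hat{\mathbb{Z}}^p) \to \mathcal{O}(\mathbb{Z}/n\mathbb{Z}))$ is torsion-free, so $\mathrm{Aut}(X,\xi)$ meets it trivially and therefore acts freely on the set of level-$n$ structures; for such $n$ the stack $\mathfrak{M}_{2d,n,\mathbb{Z}_{(p)}}$ has trivial automorphism groups and is an algebraic space. To upgrade it to a scheme I would invoke the uniform very ampleness of a fixed power $\xi^{\otimes m}$ of a degree-$2d$ primitive polarization in all characteristics: it realizes $\mathfrak{M}_{2d}$, and with it $\mathfrak{M}_{2d,n}$, as a locally closed substack of a suitable Hilbert scheme, so $M_{2d,n,\mathbb{Z}_{(p)}}$ is quasi-projective over $\mathbb{Z}_{(p)}$, in particular a scheme.

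For the smoothness, since $M_{2d,n,\mathbb{Z}_{(p)}} \to \mathfrak{M}_{2d, \mathbb{Z}_{(p)}}$ is étale it suffices to show that $\mathfrak{M}_{2d, \mathbb{Z}_{(p)}}$ is smooth over $\mathbb{Z}_{(p)}$ when $p \nmid 2d$. This is deformation-theoretic: K3 surfaces are unobstructed (one has $H^2(X, T_X) = 0$), the polarized deformation functor $\mathrm{Def}(X,\xi) \to \mathrm{Def}(X)$ is cut out by the condition that the de Rham Chern class of $\xi$ remain horizontal, and the non-degeneracy $\langle c_{dR}(\xi), c_{dR}(\xi)\rangle = 2d \in \mathbb{Z}_{(p)}^{\times}$ makes the primitive cohomology a direct summand and forces the relevant cup-product map to be surjective; hence $\mathrm{Def}(X,\xi)$ is formally smooth of relative dimension $19$. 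Alternatively one cites this directly from \cite{MP} or \cite{Rizovmoduli}.

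The least formal step, and the one I would regard as the main obstacle, is the passage to a scheme. It requires the \emph{uniform} rigidification statement --- that a single $n$ coprime to $p$ annihilates the automorphisms of \emph{every} primitively polarized K3 surface in characteristic $\ne p$, which rests on faithfulness of the automorphism action on torsion cohomology together with a uniform bound on the finite groups that can occur --- and then the uniform very ampleness of powers of the polarization needed to descend from an algebraic space to an actual scheme.
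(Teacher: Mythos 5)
Your arguments for the first and third assertions, and for the rigidification step, are essentially the standard ones and match the cited sources: the level-$n$ sheaf is a form of the finite set $\mathcal{O}(\hat{\mathbb{Z}}^p)/\mathcal{O}_n(\hat{\mathbb{Z}}^p)$, hence finite locally constant and so finite étale; faithfulness of $\mathrm{Aut}(X,\xi)$ on prime-to-$p$ cohomology together with Serre's lemma (torsion-freeness of the congruence kernel for $n\ge 3$) kills the automorphisms and yields an algebraic space; and unobstructedness of K3 surfaces plus $p\nmid 2d$ gives formal smoothness of the polarized deformation functor. These parts are fine, modulo the minor imprecision that $\mathcal{I}^p$ itself is only pro-(finite étale) rather than an étale-locally trivial torsor on the nose.

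The genuine gap is exactly where you locate it, but your proposed fix does not work. A uniformly very ample power $\xi^{\otimes m}$ does \emph{not} realize $\mathfrak{M}_{2d}$ (or $\mathfrak{M}_{2d,n}$) as a locally closed substack of a Hilbert scheme: the embedding $X\hookrightarrow \mathbb{P}(H^0(X,\xi^{\otimes m})^{\vee})$ depends on a choice of basis of sections, so what the Hilbert scheme construction gives is a locally closed subscheme $H$ which is a $PGL_{N+1}$-torsor over the stack, i.e. $\mathfrak{M}_{2d}\cong [H/PGL_{N+1}]$, not a substack of $H$. After rigidifying by the level structure the action becomes free, but the quotient of a quasi-projective scheme by a free action of $PGL_{N+1}$ is a priori only an algebraic space; upgrading it to a (quasi-projective) scheme would require a GIT stability or Viehweg-type quasi-projectivity argument, which is not available for free in mixed characteristic. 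This is precisely why the statement is handled differently here: as remarked immediately after the proposition, representability by a scheme (rather than an algebraic space) is obtained \emph{a posteriori} from the open immersion $\varphi: M_{2d,n,\mathbb{Z}_{(p)}}\hookrightarrow \mathscr{S}_{n}$ into the integral canonical model of the orthogonal Shimura variety (\cite{MP}, Corollary 4.15) --- an algebraic space admitting an open immersion into a scheme is an open subscheme, hence a scheme. What \cite{Rizovmoduli} and (\cite{MP}, Proposition 2.11) prove directly is only representability by an algebraic space, and your proof should either be restated at that level or route the scheme-ness through the period map as the paper does.
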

Here the fact that $M_{2d,n, \mathbb{Z}_{(p)}}$ is representable by a scheme (and not merely an algebraic space) follows a posteriori from the existence of an open immersion into an integral canonical model of a Shimura variety (\cite{MP}, Corollary 4.15).

\begin{rem}\label{remarkorientation}
In \cite{MP}, compare also (\cite{Taelman}, 5.1), the author defines a 2:1 étale cover of $\mathfrak{M}_{2d,\mathbb{Z}_{(p)}} $ by choosing an \emph{orientation}, i.e. a trivialization of the determinant $$\mathrm{det}(\Lambda\otimes \underline{\hat{\mathbb{Z}}}^p) \cong \mathrm{det}(\mathcal{H}_{\hat{\mathbb{Z}}^p})$$ which over $\mathfrak{M}_{2d,\mathbb{C}}$ is induced by a trivialization
$ \mathrm{det}(\underline{\Lambda}) \cong \mathrm{det}(\mathcal{H}_{B})$ of the determinant of Betti cohomology.
One should then define \emph{oriented level structures} which respect the choice of the orientation (\cite{Taelman}, Definition 5.11).
\emph{In this paper, we will assume from now on that $n = q^r \ge 3$ is a power of a prime number $q \not= p$}.
In this special case, the trivialization $ \mathrm{det}(\Lambda\otimes \underline{\mathbb{Z}/n\mathbb{Z}}) \cong \mathrm{det}(\mathcal{H}_{\mathbb{Z}/n\mathbb{Z}})$ given by the universal level-$n$ structure over $M_{2d, n, \mathbb{Z}_{(p)}}$ as above lifts uniquely to an orientation $\mathrm{det}(\Lambda\otimes \underline{\hat{\mathbb{Z}}}^p) \cong \mathrm{det}(\mathcal{H}_{\hat{\mathbb{Z}}^p})$
over $M_{2d, n, \mathbb{Z}_{(p)}}$.
Restricting to the case where $n =q^r$ therefore removes the necessity of choosing an orientation, and resolves the issue raised in (\cite{Taelman}, Remark 5.12). \end{rem}

Similarly, one defines finite étale covers $\mathfrak{M}_{2d, n, \mathbb{Z}_{(p)}}^{*}$ of the stack $\mathfrak{M}_{2d, \mathbb{Z}_{(p)}}^{*}$ of quasi-polarized K3 surfaces, which are representable by algebraic spaces $M_{2d, n, \mathbb{Z}_{(p)}}^{*}$ for $n$ sufficiently large (\cite{MP}, Proposition 2.11).

For $n$ large enough, we have the usual cohomological realizations of the universal polarized K3 surface $f:\mathfrak{X} \to M_{2d,n, \mathbb{Z}_{(p)}}$.
The relative Betti cohomology gives rise to a pure $\mathbb{Z}$-variation of Hodge structure
$$ \mathcal{H}_{B} := R^2f^{an}_{\mathbb{C},*} \underline{\mathbb{Z}}_{\mathfrak{X}_{\mathbb{C}}^{an}}(1)$$
of weight zero on $M_{2d,n, \mathbb{C}}$.
We denote by $(\mathcal{H}_{dR}, \nabla)$ the associated algebraic vector bundle with flat connection, which is actually defined over $M_{2d,n,\mathbb{Z}_{(p)}}$.
We use the notation $\mathcal{P}_B := \langle c_B(\mathfrak{L})\rangle^{\perp} \subset \mathcal{H}_B$ for the sub-variation corresponding to primitive cohomology.
Similarly, for $\ell \not= p$ we write $\mathcal{P}_{\ell}:= \langle c_{\ell}(\mathfrak{L})\rangle^{\perp} \subset \mathcal{H}_{\ell}$ for the primitive part of the $\ell$-adic étale local system over $M_{2d, n,\mathbb{Z}_{(p)}}$. The restricted Poincaré pairing
$$ \langle -, - \rangle: \mathcal{P}_{\ell} \times \mathcal{P}_{\ell} \to \underline{\mathbb{Z}}_{\ell}$$
is perfect for $\ell \nmid d$.

\subsection{K3 surfaces in positive characteristic}\label{K3pos}

For $X$ a smooth projective variety over $\bar{\mathbb{F}}_p$, crystalline cohomology has the additional feature of a Frobenius operator coming from the absolute Frobenius on $X$, equipping $H^k_{cris}(X/W)$ with the structure of an $F$-crystal.
Here we denote by $W:= W(\bar{\mathbb{F}}_p)$ the ring of Witt vectors of $\bar{\mathbb{F}}_p$, with its Frobenius $\sigma$ and field of fractions $\breve{\mathbb{Q}}_p$. We will now recall basic facts about $F$-crystals and study those arising from the second cohomology of a K3 surface. References are \cite{Liedtke} and \cite{Kottwitz}, \cite{Kottwitz2}.

\begin{mydef}[\cite{Liedtke}, Definition 3.1]
An ($F$-)crystal is a pair $(M, \varphi_M)$ consisting of a finite free $W$-module $M$ together with an injective $\sigma$-linear endomorphism
$\varphi_M: M \to M$.
An ($F$-)isocrystal is a pair $(V, \varphi_V)$ where $V$ is a finite dimensional $\breve{\mathbb{Q}}_p$-vector space together with a $\sigma$-linear automorphism $\varphi_V: V \to V$.
\end{mydef}

Let $r, s$ be two coprime integers with $s >0$ and $\alpha:= r/s \in \mathbb{Q}$. We can define an $F$-isocrystal $(V_{\alpha}, \varphi_{\alpha})$ in the following way.
Set $V_{\alpha}:= \breve{\mathbb{Q}}_p^{s}$ and let $\varphi_{\alpha} = b_{\alpha} \sigma$, where
$b_{\alpha}$ is given by the matrix
$$ \begin{pmatrix} 0 & 1 & & & \\
& 0 & 1 & & & \\
& & \ddots & \ddots & & \\
& & & 0 & 1 \\
p^{r} & & & & 0
\end{pmatrix}.$$

We have the following well-known classification result for $F$-isocrystals.
\begin{thm}[Dieudonné-Manin, cf. \cite{Manin}]
The category of $F$-isocrystals is semisimple, with simple objects isomorphic to $(V_{\alpha}, \varphi_{\alpha})$ for $\alpha \in \mathbb{Q}$.
\end{thm}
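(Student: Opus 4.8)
The plan is to reformulate the statement in terms of the skew Laurent polynomial ring $D := \breve{\mathbb{Q}}_p\{F, F^{-1}\}$, in which $Fa = \sigma(a)F$ for $a \in \breve{\mathbb{Q}}_p$. An $F$-isocrystal is the same as a left $D$-module finite-dimensional over $\breve{\mathbb{Q}}_p$ ($F$ acting as $\varphi_V$), and the two-sided division algorithm in the variable $F$ shows $D$ is a (left and right) principal ideal domain. A direct check identifies $V_\alpha$ with the cyclic module $D/D(F^s - p^r)$ for $\alpha = r/s$, since the matrix $b_\alpha$ is a companion matrix of $X^s - p^r$. With this in hand, the theorem reduces to three claims: (a) every nonzero isocrystal $V$ receives a nonzero morphism from some $V_\alpha$; (b) each $V_\alpha$ is simple, with $\mathrm{End}(V_\alpha)$ a division algebra, and $V_\alpha \not\cong V_\beta$ for $\alpha \neq \beta$; (c) $\mathrm{Ext}^1_{\mathrm{Isoc}}(V_\beta, V_\alpha) = 0$ for all $\alpha, \beta$. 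Granting these, an induction on $\dim_{\breve{\mathbb{Q}}_p} V$: pick $V_\alpha \hookrightarrow V$ by (a), note $V/V_\alpha$ is a sum of $V_\gamma$'s by the inductive hypothesis, and split the extension by (c); then (b) identifies the $V_\alpha$ as the pairwise non-isomorphic simple objects.

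The crucial step is (a), and this is where I expect the main obstacle. Equivalently, one must show that every nonzero $(V,\varphi)$ contains a nonzero vector $v$ with $\varphi^s(v) = p^r v$ for some coprime $r, s$, $s > 0$ (such a $v$ being exactly an element killed by $F^s - p^r$, hence the image of a generator of $V_{r/s}$). To produce $v$, choose a $\varphi$-stable $W$-lattice $M \subset V$; after replacing $\varphi$ by $p^{-m}\varphi$ for suitable $m$ we may assume $\varphi(M) \subseteq M$ but $\varphi(M) \not\subseteq pM$, so the reduction $\bar\varphi$ on $M/pM$ is a nonzero $\sigma$-semilinear endomorphism of an $\bar{\mathbb{F}}_p$-vector space. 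Passing to the stable image $\bigcap_n \bar\varphi^n(M/pM)$, on which $\bar\varphi$ is bijective, one uses that $\bar{\mathbb{F}}_p$ is algebraically closed — a Lang-type solvability of the twisted-linear equations cutting out fixed vectors — to find a nonzero $\bar v$ with $\bar\varphi^s(\bar v) = \bar v$; running this for the smallest occurring slope amounts to a Newton-polygon bookkeeping on the normalized valuations of the $\varphi^n(M)$. One then lifts $\bar v$ to an honest eigenvector in $V$ by $p$-adic successive approximation, controlling the error with the same Lang-type input, and a final normalization replaces $r/s$ by its reduced form.

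For (b), a direct computation identifies $\mathrm{End}(V_\alpha)$ with the central division algebra over $\mathbb{Q}_p$ of dimension $s^2$ whose Hasse invariant is the class of $r/s$ in $\mathrm{Br}(\mathbb{Q}_p) = \mathbb{Q}/\mathbb{Z}$ (up to a sign depending on conventions); in particular it is a division algebra, so $V_\alpha$ is simple by Schur's lemma. A nonzero morphism $V_\beta \to V_\alpha$ is then forced to be an isomorphism, and comparing the normalized valuations of $\varphi^{n}$ on lattices shows $V_\alpha \cong V_\beta$ implies $\alpha = \beta$, giving the second half of (b). Finally, for (c) an extension $0 \to V_\alpha \to E \to V_\beta \to 0$ is given in block form by a Frobenius-matrix with off-diagonal block $B \in \mathrm{Hom}_{\breve{\mathbb{Q}}_p}(V_\beta, V_\alpha)$, and splitting it amounts to solving $\varphi_\alpha \circ u - u \circ \varphi_\beta = B$; the operator $u \mapsto \varphi_\alpha u \varphi_\beta^{-1} - u$ on $\mathrm{Hom}_{\breve{\mathbb{Q}}_p}(V_\beta, V_\alpha)$ is surjective when $\alpha \neq \beta$ by the same $\bar{\mathbb{F}}_p$-solvability input used in (a), while the isoclinic case $\alpha = \beta$ is handled by showing directly — applying the eigenvector construction of step (a) repeatedly — that an isoclinic isocrystal of slope $\alpha$ is a direct sum of copies of $V_\alpha$, hence every such extension splits. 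The combinatorics in these last steps is routine; the genuine content, and the expected bottleneck, is the lattice-and-approximation argument of step (a).
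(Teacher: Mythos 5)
The paper does not prove this statement: it is the classical Dieudonn\'e--Manin classification, quoted with a citation to Manin and used as a black box. So there is nothing internal to compare your argument against; what you have written is, in substance, the standard textbook proof (Manin's original argument as streamlined by Demazure and Katz): pass to left modules over the skew ring $D=\breve{\mathbb{Q}}_p\{F,F^{-1}\}$, identify $V_{r/s}$ with $D/D(F^s-p^r)$, produce an eigenvector $\varphi^s v=p^rv$ for the smallest slope, compute $\mathrm{End}(V_\alpha)$ as the division algebra of invariant $\pm r/s$, and kill $\mathrm{Ext}^1$ by the bijectivity of $T-1$ for $T$ semilinear of nonzero slope together with Lang-type surjectivity of $\sigma-1$ in the isoclinic case. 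The architecture is right and each ingredient is provable.

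Two caveats. First, in (b) you deduce simplicity of $V_\alpha$ from $\mathrm{End}(V_\alpha)$ being a division algebra ``by Schur's lemma''; Schur's lemma goes the other way, and a division endomorphism ring only gives indecomposability. Simplicity of $V_\alpha$ should instead come from the valuation comparison you already invoke: any nonzero $v$ with $\varphi^{s'}v=p^{r'}v$ inside $V_\alpha$ forces $r'/s'=r/s$ in lowest terms, so by (a) every nonzero subobject contains a copy of $V_\alpha$ itself and hence is everything. Second, in (a) the single normalization $\varphi(M)\subseteq M$, $\varphi(M)\not\subseteq pM$ is not enough: for $V_{1/2}$ with its standard lattice the stable image of $\bar\varphi$ on $M/pM$ is already zero, so one must first determine the correct pair $(r,s)$ from the asymptotics of $\mathrm{ord}_p$ on $\varphi^n(M)$ and then run the reduction-and-lifting argument for $p^{-r}\varphi^s$. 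You flag exactly this (``Newton-polygon bookkeeping'') as the bottleneck, which is accurate, but as written that step is an outline rather than a proof; it is where essentially all the work of the theorem lives.
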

In particular, every $F$-isocrystal $(V, \varphi_V)$ decomposes as a direct sum $$(V, \varphi_V) \cong \bigoplus_{\alpha \in \mathbb{Q}} (V_{\alpha}, \varphi_{\alpha})^{n_{\alpha}}.$$
The rational numbers $\alpha \in \mathbb{Q}$ which show up in this decomposition (i.e. $n_{\alpha} \not= 0$) are called the \emph{Newton slopes} of $(V, \varphi_V)$, and the $n_{\alpha}$ the corresponding multiplicities.

For an $F$-isocrystal $(V, \varphi_V)$ of rank $n$, one can visualize the Newton slopes by drawing its \emph{Newton polygon}.
Ordering the Newton slopes in an increasing fashion $$\alpha_1 < \alpha_2 < \cdots,$$
the Newton polygon is the piecewise linear function $[0, n] \to \mathbb{R}$ with the following slopes:
$$\begin{matrix} \mathrm{slope \, } & \alpha_1 & \textnormal{if \,} 0 \le t < n_{\alpha_1}; \\
\mathrm{slope \, } & \alpha_2 & \textnormal{if \,} n_{\alpha_1} \le t < n_{\alpha_2}; \\
& \cdots &
\end{matrix}$$

There is a group-theoretic description of the set of isomorphism classes of $F$-isocrystals in terms of the Kottwitz set.
\begin{mydef}[\cite{Kottwitz}, 1.7]
For a reductive group $G$ over $\mathbb{Q}_p$, we define the \emph{Kottwitz set} $B(G)$ as the quotient
$$ B(G) := G(\breve{\mathbb{Q}}_p) / \sim,$$
where we introduce the equivalence relation $b \sim b'$ if $b' = gb \sigma(g)^{-1}$ for some $g \in G(\breve{\mathbb{Q}}_p)$.
\end{mydef}

For any element $b \in GL_n(\breve{\mathbb{Q}}_p)$, we can define an $F$-isocrystal $(V_b, \varphi_{b})$ by setting $V_b:=\breve{\mathbb{Q}}_p^n$ and
$\varphi_{b}:= b \sigma $.
One can check that the isocrystals constructed in this way are isomorphic exactly if $b \sim b'$. As a consequence, there is a bijection
between the set of isomorphism classes of $F$-isocrystals of rank $n$ and the elements in the Kottwitz set $B(GL_n)$.
More generally, we refer to the elements of $B(G)$ as \emph{$G$-isocrystals}.

The Kottwitz set $B(G)$ has two important invariants, the Newton map and the Kottwitz map.
The Newton map $\nu: B(G) \to \mathcal{N}(G)$ goes from the Kottwitz set to the Newton chamber
$$\mathcal{N}(G) := \left(\mathrm{Hom}(\mathbb{D}_{\bar{\mathbb{Q}}_p}, G_{\bar{\mathbb{Q}}_p}) / G(\bar{\mathbb{Q}}_p)- \mathrm{conjugation}\right)^{\mathrm{Gal}(\bar{\mathbb{Q}}_p / \mathbb{Q}_p)},$$
the set of $G(\bar{\mathbb{Q}}_p)$-conjugacy classes fixed by the absolute Galois group of $\mathbb{Q}_p$ of morphisms from the slope pro-torus $\mathbb{D}$ with character group $\mathbb{Q}$ to $G_{\bar{\mathbb{Q}}_p}$.
The set $\mathcal{N}(G) $ can be identified with the set $X^{*}(T)^+_{\mathbb{Q}}$ of rational positive cocharacters of the maximal unramified torus $T$ of a quasi-split inner form of $G$. Via this description, $\mathcal{N}(G)$ is equipped with an order $\ge$, where $v_2 \ge v_1$ if and only if $v_2 -v_1 \in \mathbb{Q}_{\ge 0} \Phi_0^+$ for $\Phi_0^+$ the set of positive roots of $T$.
For $G=GL_n$, the element $\nu_b \in \mathcal{N}(G)$ corresponds to the collection of Newton slopes of the isocrystal $(V_b, \varphi_{b})$ via the identification $X^{*}(T)_{\mathbb{Q}} = \mathbb{Q}^n$.
In general, the Newton map can be defined using the Tannakian formalism, cf. (\cite{Kottwitz}, 4.2).

The other map is the Kottwitz map $\kappa: B(G) \to \pi_1(G)_{\mathrm{Gal}(\bar{\mathbb{Q}}_p / \mathbb{Q}_p)}$.
We will not recall it here, but the interested reader can find a definition in (\cite{Kottwitz2}, 4.9 and 7.5).
The map $B(G) \overset{\nu \times \kappa}{\to} \mathcal{N}(G) \times \pi_1(G)_{\mathrm{Gal}(\bar{\mathbb{Q}}_p / \mathbb{Q}_p)}$ is injective, i.e. an element of $B(G)$ is uniquely characterized by its images under the Newton and Kottwitz maps (cf. \cite{Kottwitz2}, 4.13).

A conjugacy class of cocharacters $\mu: \mathbb{G}_{m, \bar{\mathbb{Q}}_p} \to G_{\bar{\mathbb{Q}}_p}$ induces an element $\mu^{\diamond} \in \mathcal{N}(G)$, as well as an element $\mu^{\sharp} \in \pi_1(G)_{\mathrm{Gal}(\bar{\mathbb{Q}}_p / \mathbb{Q}_p)}$.
For $G=GL_n$, such a conjugacy class is given by a collection of distinct integers $\mu_1 < … < \mu_i$ with multiplicities $m_{\mu_1}, …, m_{\mu_i}$.
The \emph{Hodge polygon} of $\mu$ is defined as the polygon with slopes $\mu_i$ and multiplicities $m_{\mu_i}$, similarly as above.

\begin{mydef}[\cite{Kottwitz2}, 6.2]
We define the \emph{admissible set} $$B(G, \mu):= \{ b \in B(G)| [\nu_b] \le \mu^{\diamond}, \kappa({b}) = \mu^{\sharp}\}.$$
\end{mydef}
\begin{rem} For $G=GL_n$, this is precisely the set of $b \in B(G)$ such that the Newton polygon of $(V_b, \varphi_b)$ lies on or above the Hodge polygon of $\mu$, and both have the same endpoint.
\end{rem}

We study the isocrystals arising from the second cohomology of a quasi-polarized K3 surface $(X, \xi)$ over $\bar{\mathbb{F}}_p$.
The crystalline cohomology $H_{cris}^2(X/W)$ is a free $W$-module of rank 22 equipped with a Frobenius $\varphi$ which becomes an isomorphism after inverting $p$. We consider the $F$-isocrystal
$$\mathcal{I}(X):= P^2_{cris}(X/W)[\frac{1}{p}](1) = \langle c_{cris}(\xi)\rangle^{\perp} \subset H^2_{cris}(X/W)[\frac{1}{p}](1),$$
where $(1)$ again denotes a Tate twist.
Via the choice of an isometry $P^2_{cris}(X/W)[\frac{1}{p}] \cong \Lambda_{d,\mathbb{Q}_p}$, the isocrystal $\mathcal{I}(X)$ corresponds to an element of $B(GL(\Lambda_{d,\breve{\mathbb{Q}}_p}))$. The fact that the intersection pairing
$$ \mathcal{I}(X) \otimes \mathcal{I}(X) \to \breve{\mathbb{Q}}_p$$
is a morphism of $F$-isocrystals and the determinant $\mathrm{det}(\mathcal{I}(X))$ is of slope $0$ shows that the isocrystal lies in the Kottwitz set $B(G)$ for $G = SO(\Lambda_{d,\mathbb{Q}_p})$.

Let $\mu: \mathbb{G}_{m,\mathbb{Q}_p} \to G$ be the cocharacter \begin{eqnarray*}\mu: \mathbb{G}_{m, \mathbb{Q}_p} & \to & G, \\
t & \mapsto & \left(\begin{array}{rrrrr}t & & &
& \\ & 1 & & & \\ & & \ddots & & \\ & & & 1 & \\ & & & & t^{-1} \end{array} \right) \end{eqnarray*}
recording the Hodge numbers $(1,19,1)$ of the primitive second cohomology of a quasi-polarized K3 surface.
A classical result of Berthelot-Ogus (\cite{BO}; see also the survey \cite{Liedtke}, Theorem 3.8) asserts that the Newton polygon of $\mathcal{I}(X)$ lies above or on the Hodge polygon of $\mu$, and both have the same endpoint.
This shows that the $G$-isocrystal attached to $\mathcal{I}(X)$ lies in fact in $B(G, \mu)$.

Conversely, one can show that in fact all elements of $B(G, \mu)$ arise from a quasi-polarized K3 surface over $\bar{\mathbb{F}}_p$, see \cite{CyclesK3}.

One can check that the possible Newton slopes of the isocrystals coming from elements in $B(G,\mu)$ are the ones given by
$$\underbrace{-\frac{1}{h}, \cdots, -\frac{1}{h}}_h, \underbrace{0, \cdots , 0}_{21-2h}, \underbrace{\frac{1}{h}, \cdots, \frac{1}{h}}_h $$ for $h \in \{1, \cdots, 10, \infty\}$, where the case $h = \infty$ corresponds to the isoclinic isocrystal of slope $0$.
We denote the corresponding elements of $B(G, \mu)$ by $b_h$ for $h \in \{1, \cdots, 10, \infty\} $.
\begin{mydef}
A K3 surface $X$ whose associated $G$-isocrystal is $b_h$ is called \emph{of height} $ h \in \{1, \cdots, 10, \infty\}$. A K3 surface of infinite height is called \emph{supersingular}.
\end{mydef}
Note that $\nu_{b_{\infty}} \le \nu_{b_{10}} \le \cdots \le \nu_{b_1}$ in terms of the order in $\mathcal{N}(G)$.
This induces a natural height stratification by closed subvarieties
$$ M_{2d,n, \bar{\mathbb{F}}_p} = \overline{M}_1 \supset \cdots \supset \overline{M}_{10} \supset \overline{M}_{2d,ss,n},$$
where for $1 \le h \le 10$, the subvariety $M_h$ is the subspace of K3 surfaces of height $\ge h$, and $\overline{M}_{2d,ss,n} $ is the supersingular locus.
The height stratification was first introduced by Artin in \cite{Artin}.
The supersingular locus is a $9$-dimensional closed algebraic subvariety (cf. \cite{Ogus}, Theorem 2.7).

From now on, we assume $p \ge 3$.
It follows from the integral Tate conjecture (\cite{MP}, Theorem 1) and (\cite{K3crystals}, Corollary 1.6) that the Neron-Severi group $NS(X)$
of a supersingular K3 surface $X$ has maximal rank $22$, and the Neron-Severi group generates the entire cohomology:
\begin{eqnarray}\label{generatescoho} NS(X) \otimes \mathbb{Z}_{\ell} & \cong & H^2_{\acute{e}t}(X, \mathbb{Z}_{\ell}); \\ \label{generatescoho2}
NS(X) \otimes \mathbb{Z}_p & \cong & H_{cris}^2(X/W)^{\varphi=p}. \end{eqnarray}
The statement in (\cite{MP}, Theorem 1) is just the rational version of the Tate conjecture, but it is known that for divisors, this version of the conjecture implies the integral Tate conjecture (cf. \cite{Tateconjecture}).

\begin{prop}\label{NSintegral}
For $X$ a supersingular K3 surface over $\bar{\mathbb{F}}_p$ let $$NS(X)_{\mathbb{Q}} \hookrightarrow \prod_{\ell \not= p} H^2_{\acute{e}t}(X, \mathbb{Q}_{\ell}) \times H^2_{cris}(X/W)[\frac{1}{p}]$$ denote the diagonal embedding.
Then $$NS(X) = NS(X)_{\mathbb{Q}} \cap \prod_{\ell \not= p} H^2_{\acute{e}t}(X, \mathbb{Z}_{\ell}) \times H^2_{cris}(X/W).$$
\end{prop}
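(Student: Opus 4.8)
The plan is to reduce the statement to the two forms of the integral Tate conjecture recalled just above, namely (\ref{generatescoho}) and (\ref{generatescoho2}), by a local-to-global argument for lattices. First I would dispose of the inclusion $NS(X)\subseteq NS(X)_{\mathbb{Q}}\cap\bigl(\prod_{\ell\neq p}H^2_{\acute{e}t}(X,\mathbb{Z}_{\ell})\times H^2_{cris}(X/W)\bigr)$, which is immediate: $NS(X)=\mathrm{Pic}(X)$ is torsion-free because $X$ is a K3 surface, so it injects into $NS(X)_{\mathbb{Q}}$, and the $\ell$-adic and crystalline cycle class maps carry an honest divisor class into the respective integral lattices. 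The reverse inclusion carries the content. For it I would invoke the elementary fact that a free $\mathbb{Z}$-module $L$ of finite rank equals the intersection $\bigcap_{\ell}(L\otimes\mathbb{Z}_{\ell})$ formed inside $L\otimes\mathbb{Q}$; equivalently, an element of $L\otimes\mathbb{Q}$ whose coordinates in a $\mathbb{Z}$-basis of $L$ are $\ell$-integral for every prime $\ell$ already lies in $L$. Thus it suffices to take $v\in NS(X)_{\mathbb{Q}}$ whose diagonal image lies in $\prod_{\ell\neq p}H^2_{\acute{e}t}(X,\mathbb{Z}_{\ell})\times H^2_{cris}(X/W)$ and show $v\in NS(X)\otimes\mathbb{Z}_{\ell}$ for every prime $\ell$.

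For $\ell\neq p$ this is immediate from (\ref{generatescoho}): the cycle class map induces an isomorphism $NS(X)\otimes\mathbb{Z}_{\ell}\overset{\sim}{\to}H^2_{\acute{e}t}(X,\mathbb{Z}_{\ell})$, and by hypothesis the $\ell$-component of $v$ lands in the target, hence $v\in NS(X)\otimes\mathbb{Z}_{\ell}$. The case $\ell=p$ needs one extra ingredient, namely that the crystalline cycle class of a codimension-one cycle is a $\varphi$-eigenvector with eigenvalue $p$ — this is exactly what the Tate twist in $\mathcal{I}(X)=P^2_{cris}(X/W)[\tfrac1p](1)$ records. Granting this, the image of $v$ in $H^2_{cris}(X/W)[\tfrac1p]$ automatically lies in the $\varphi=p$ eigenspace, and since it is moreover integral by hypothesis, it lies in $H^2_{cris}(X/W)^{\varphi=p}$, which by (\ref{generatescoho2}) the cycle class map identifies with $NS(X)\otimes\mathbb{Z}_p$. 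Hence $v\in NS(X)\otimes\mathbb{Z}_p$, and, varying $\ell$, we obtain $v\in NS(X)$.

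I do not expect a serious obstacle: the real input is (\ref{generatescoho}) and (\ref{generatescoho2}) — the integral Tate conjecture for supersingular K3 surfaces in $\ell$-adic and crystalline form — which already exhibit the integral cohomology lattices as completions of the Néron--Severi lattice, so what is left is formal. The one place to be careful is the compatibility of the isomorphism (\ref{generatescoho2}) with the cycle class map together with the Frobenius-equivariance and Tate-twist normalization at $p$, so that "integral in $H^2_{cris}(X/W)$" and "automatically a $\varphi=p$ eigenvector" combine to place $v$ inside $NS(X)\otimes\mathbb{Z}_p$ rather than merely a $p$-power multiple of it.
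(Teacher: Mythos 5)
Your proposal is correct and follows essentially the same route as the paper: reduce to the isomorphisms (\ref{generatescoho}) and (\ref{generatescoho2}) prime by prime and conclude via the elementary fact that a torsion-free finitely generated $\mathbb{Z}$-module is the intersection of its localizations inside its rationalization. Your explicit remark that elements of $NS(X)_{\mathbb{Q}}$ automatically lie in the $\varphi=p$ eigenspace (so that intersecting with $H^2_{cris}(X/W)$ is the same as intersecting with $H^2_{cris}(X/W)^{\varphi=p}$) is a point the paper's proof leaves implicit, and it is correctly handled.
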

\begin{proof}
It follows from (\ref{generatescoho}) and (\ref{generatescoho2})
that $$NS(X) \otimes \mathbb{Z}_{(\ell)} = NS(X)_{\mathbb{Q}} \cap H^2_{\acute{e}t}(X, \mathbb{Z}_{\ell})$$
for $\ell \not=p$ and $$ NS(X) \otimes \mathbb{Z}_{(p)} = NS(X)_{\mathbb{Q}} \cap H^2_{cris}(X/W).$$
This proves the proposition since $NS(X)$ is torsion-free.
\end{proof}

\section{Shimura varieties of orthogonal type}\label{sectionSh}

In this section we study the period map for the moduli space of polarized K3 surfaces towards a Shimura variety of orthogonal type. References are \cite{Torelli}, \cite{DeligneShimura}, \cite{RizovCrelle}.

\subsection{The complex period map}

The Hodge diamond of a complex K3 surface $X$ has the form
$$\begin{matrix} &&1&& \\ &0&&0& \\ 1& & 20 & & 1 \\ &0&&0& \\ &&1&&
\end{matrix}. $$
In particular, the Hodge structure $$H_B^2(X, \mathbb{C}) = H^{0,2}(X) \oplus H^{1,1}(X) \oplus H^{2,0}(X)$$ is completely determined by the one-dimensional subspace $H^{2,0}(X) \subset H_B^2(X, \mathbb{C}).$
Name\-ly, given $H^{2,0}(X)$, we can recover $H^{0,2}(X) = \overline{H^{2,0}(X)} $ and $H^{1,1}(X) = (H^{2,0}(X))^{\perp} $.
The complex analytic subspace $$\mathcal{D} = \{x \in \mathbb{P}(\Lambda_d \otimes \mathbb{C}) | \langle x,x \rangle=0, \langle x,\overline{x}\rangle >0 \} \subset \mathbb{P}(\Lambda_d \otimes \mathbb{C})$$
can thus be viewed as the complex period domain of Hodge structures of the type arising from polarized K3 surfaces of degree $2d$.
Recall that we defined the $\mathbb{Q}$-group $G:= SO(\Lambda_{d,\mathbb{Q}})$. The group $G(\mathbb{R})$ acts transitively on $\mathcal{D}$.
Therefore the manifold $\mathcal{D}$ can also be viewed as the $G(\mathbb{R})$-orbit of a Hodge cocharacter $h: \mathbb{S} \to G_{\mathbb{R}}$.

Defining the congruence subgroup
\begin{equation}\label{groupdef}\mathcal{O}_n(\mathbb{Z}) := \left\{g \in \mathcal{O}(\mathbb{Z}) |\, g \equiv 1 \mod n \right\}, \end{equation}
the monodromy representation $\rho: \pi_1(M_{2d,n,\mathbb{C}}^{an}) \to \mathcal{O}(\mathbb{Z})$ corresponding to the local system $\mathcal{H}_B$ on $M_{2d,n,\mathbb{C}}$ factors through $\mathcal{O}_n(\mathbb{Z})$.

A fundamental result in Hodge theory is the Torelli theorem for K3 surfaces due to Pjateckii-Shapiro and Shafarevic.

\begin{thm}[\cite{Torelli}]
The complex period map
\begin{equation}\label{pemap} \Phi: M_{2d,n,\mathbb{C}}^{an} \to \mathcal{O}_n(\mathbb{Z}) \backslash \mathcal{D} \end{equation} is an open immersion.
\end{thm}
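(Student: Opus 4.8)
The plan is to prove the classical Torelli theorem for polarized K3 surfaces by reducing to the statement for unpolarized K3 surfaces and then invoking the two standard pillars of the subject: the \emph{Global Torelli Theorem} (an isometry of Hodge structures mapping an ample class to an ample class is induced by a unique isomorphism of K3 surfaces) and the \emph{Surjectivity of the period map} (every point of the period domain that is not killed by the polarization is realized by some marked K3 surface). Concretely, I would first unwind what the map $\Phi$ does: a point of $M_{2d,n,\mathbb{C}}^{an}$ is a polarized K3 surface $(X,\xi)$ of degree $2d$ together with a level-$n$ structure $\eta$, and $\Phi$ sends it to the $\mathcal{O}_n(\mathbb{Z})$-orbit of the line $H^{2,0}(X) \subset \Lambda_d \otimes \mathbb{C}$, where we use $\eta$ and a marking compatible with $\xi \mapsto \lambda$ to identify $P_B^2(X,\mathbb{Z})$ with $\Lambda_d$. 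That this is well-defined (independent of the choice of marking refining $\eta$, and landing in $\mathcal{D}$ by the Hodge–Riemann relations) is routine.

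Next I would establish that $\Phi$ is injective and étale, which gives an open immersion once we know it is a local homeomorphism onto its image with injectivity. Injectivity: if $(X,\xi,\eta)$ and $(X',\xi',\eta')$ have the same period point up to $\mathcal{O}_n(\mathbb{Z})$, then after adjusting markings we obtain a Hodge isometry $P_B^2(X,\mathbb{Z}) \xrightarrow{\sim} P_B^2(X',\mathbb{Z})$ carrying $c_B(\xi)$ to $c_B(\xi')$; extending by $\lambda \mapsto \lambda$ gives a Hodge isometry $H_B^2(X,\mathbb{Z}) \xrightarrow{\sim} H_B^2(X',\mathbb{Z})$ sending the ample class $\xi$ to the ample class $\xi'$, so the Global Torelli Theorem produces a unique isomorphism $f\colon X' \xrightarrow{\sim} X$ with $f^*\xi = \xi'$; one then checks $f$ respects the level structures (here the hypothesis $n = q^r \ge 3$ and the rigidity of level structures make the identification of $\eta,\eta'$ automatic, so $f$ is an isomorphism in $M_{2d,n,\mathbb{C}}$). Étaleness: this is the local Torelli theorem, i.e.\ the differential of $\Phi$ is an isomorphism, which follows from the Kodaira–Spencer identification of the tangent space of the moduli space with $H^1(X,T_X)$ and the cup-product pairing $H^1(X,T_X) \times H^0(X,\Omega_X^2) \to H^1(X,\Omega_X^1)$ together with the description of the tangent space of $\mathcal{D}$ at $H^{2,0}(X)$ as $\mathrm{Hom}(H^{2,0}, H^{1,1})$; since $\Omega_X^2 \cong \mathcal{O}_X$ this pairing is perfect and the dimensions ($20 = \dim M_{2d,n}$ on both sides) match.

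It then remains to see that the image is open, which is exactly where surjectivity of the period map enters: given a point $x \in \mathcal{O}_n(\mathbb{Z}) \backslash \mathcal{D}$ sufficiently close to $\Phi(X,\xi,\eta)$, one deforms $(X,\xi)$ in a small analytic neighborhood and uses the étaleness just proved to see that nearby period points are all hit; more precisely, $\Phi$ being an open immersion on the level of analytic spaces is equivalent to $\Phi$ being injective and a local isomorphism, both of which we have. I would note that the constraint that the polarization stays ample (rather than merely big and nef) along deformations is automatic in a small neighborhood since ampleness is open, so no issue with the boundary divisors of $\mathfrak{M}_{2d}^*$ arises here.

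The main obstacle is really a matter of correct bookkeeping rather than new mathematics: the delicate point is passing between isometries of the \emph{primitive} lattice $\Lambda_d$ and isometries of the \emph{full} K3 lattice $\Lambda$ fixing $\lambda$, because, as the paper itself emphasizes after equation~(\ref{integralgroup}), not every self-isometry of $\Lambda_d$ extends to one of $\Lambda$ fixing $\lambda$ — this is precisely why the group appearing in~(\ref{pemap}) is $\mathcal{O}_n(\mathbb{Z})$ defined via~(\ref{orthgroup}) and not $O(\Lambda_d)$. One must check that the Hodge isometry produced from a coincidence of period points, which a priori lives on $\Lambda_d$, does extend (it does, because it sends $c_B(\xi)$ to $c_B(\xi')$ and these are both $\equiv \lambda$, and the level-$n$ structure pins down the extension), and conversely that the $\mathcal{O}_n(\mathbb{Z})$-action on $\mathcal{D}$ has no extra identifications beyond those coming from isomorphisms of polarized K3 surfaces with level structure. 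Everything else — well-definedness, the Hodge–Riemann positivity, local Torelli via Kodaira–Spencer — is standard and I would cite \cite{Torelli} and \cite{K3global} for the two global inputs.
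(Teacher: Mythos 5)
The paper offers no proof of this statement: it is quoted from \cite{Torelli} as a classical black box, and the text that follows merely unwinds the definition of $\Phi$ on points and restates its injectivity as the classical Torelli theorem for polarized K3 surfaces. Your sketch is therefore not competing with an argument in the paper; it is the standard textbook proof (Global Torelli for injectivity, local Torelli via Kodaira--Spencer for \'etaleness, and ``injective plus local isomorphism equals open immersion''), and its logical skeleton is sound. Your worry about passing between isometries of $\Lambda_d$ and of $\Lambda$ fixing $\lambda$ is legitimate but is in fact already absorbed by the setup: markings are isometries of the full lattice $\Lambda$ carrying $\lambda$ to $c_B(\xi)$, and $\mathcal{O}_n(\mathbb{Z})$ is by definition (\ref{orthgroup}) a group of isometries of $\Lambda$ fixing $\lambda$, so the Hodge isometry one extracts from a coincidence of period points is already defined on all of $H^2_B$ and sends $c_B(\xi)$ to $c_B(\xi')$, which is exactly the input the strong Torelli theorem needs.

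Two slips are worth correcting. First, the dimension count: the moduli space of polarized K3 surfaces of degree $2d$ and the period domain $\mathcal{D}$ (an open piece of a quadric in $\mathbb{P}(\Lambda_{d,\mathbb{C}})=\mathbb{P}^{20}$) are both $19$-dimensional, not $20$; the number $20$ is the dimension of the unpolarized deformation space $H^1(X,T_X)$, of which the polarized deformations form the codimension-one subspace annihilating $\xi$, and correspondingly the target of the differential is $\mathrm{Hom}(H^{2,0},H^{1,1}\cap P^2)$. Second, surjectivity of the period map (Kulikov) plays no role in proving that $\Phi$ is an open immersion --- as you yourself note, injectivity plus local biholomorphy already gives this, since \'etale maps are open. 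Surjectivity enters only later, in Theorem \ref{imagecomplex}, when one identifies the image of $\Phi$ as $\mathcal{G}_n(\mathbb{Z})\backslash\mathcal{D}^{\circ}$; invoking it here muddies the logic. (One should also record that for $n\ge 3$ the group $\mathcal{O}_n(\mathbb{Z})$ is torsion-free, so that $\mathcal{O}_n(\mathbb{Z})\backslash\mathcal{D}$ is a genuine complex manifold and ``open immersion'' has its naive meaning.) With those corrections the proposal is a faithful account of the proof supplied by the cited reference and by \cite{K3global}.
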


One can show that without level structure the stack $\mathfrak{M}_{2d, \mathbb{C}}$ admits a coarse moduli space $M_{2d,\mathbb{C}}$ and a period map $$ \Phi: M_{2d,\mathbb{C}}^{an} \hookrightarrow \mathcal{O}(\mathbb{Z}) \backslash \mathcal{D}. $$
Let us recall the explicit description of this period map on complex points.
For a complex point $x \in M_{2d,\mathbb{C}}(\mathbb{C})$ corresponding to a polarized complex K3 surface $(X, \xi)$, we
choose a \emph{marking} of $(X, \xi)$, i.e. an isometry $\iota: \Lambda \overset{\sim}{\rightarrow} H_B^2(X, \mathbb{Z})$ mapping $\lambda $ to $c_B(\xi)$.
Via the marking $\iota$, the one-dimensional subspace $ H^{2,0}(X) \subset H_B^2(X, \mathbb{Q}) \otimes \mathbb{C}$ defines a one-dimensional subspace $\iota^{-1} (H^{2,0}(X)) \subset \Lambda_d \otimes \mathbb{C}$.
The image of $x$ in $\mathcal{D}$ with respect to the marking $\iota$ is defined to be the corresponding point in $\mathcal{D} \subset \mathbb{P}(\Lambda_{d,\mathbb{C}})$.
Changing the marking translates into a left action of $\mathcal{O}(\mathbb{Z})$ on $\mathcal{D}$, and therefore after taking the quotient by this action, this gives a well-defined point $\Phi(x) \in \mathcal{O}(\mathbb{Z}) \backslash \mathcal{D}$.

The injectivity of the period map thus translates into the following theorem:

\begin{thm}[\cite{Torelli}]
Let $(X,\xi)$ and $(X', \xi')$ be two polarized K3 surfaces over $\mathbb{C}$ and let $\iota: H_B^2(X',\mathbb{Z}) \overset{\sim}{\rightarrow} H_B^2(X,\mathbb{Z}) $ be an isometry mapping $c_B(\xi')$ to $c_B(\xi)$ and compatible with the Hodge structures.
Then there exists a unique isomorphism $f: X \overset{\sim}{\rightarrow} X'$ inducing $\iota$.
\end{thm}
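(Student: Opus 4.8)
The plan is to deduce the statement from the \emph{strong Torelli theorem} for K3 surfaces, following the strategy of Pjateckii-Shapiro–Shafarevich \cite{Torelli}. The first step is a reduction: I would show that the hypothesis that $\iota$ carries the \emph{ample} class $c_B(\xi')$ to the \emph{ample} class $c_B(\xi)$ forces $\iota$ to be an \emph{effective} Hodge isometry, i.e.\ to carry the ample cone $\mathrm{Amp}(X')\subset \mathrm{NS}(X')_{\mathbb R}$ onto $\mathrm{Amp}(X)$. Since $\iota$ preserves the Hodge decomposition, it restricts to an isometry $\mathrm{NS}(X')_{\mathbb R}\overset{\sim}{\to}\mathrm{NS}(X)_{\mathbb R}$ and sends $(-2)$-classes of type $(1,1)$ to such classes; hence it maps the cone $\{x\mid x^2>0\}$ to its analogue and permutes the walls $\delta^{\perp}$, thus permuting the chambers cut out in the positive cone. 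The requirement $\iota(c_B(\xi'))=c_B(\xi)$, with both sides lying in the respective positive cones (the connected components containing ample classes), pins down which component goes where, and then $\mathrm{Amp}(X')$ — one chamber — is sent to the chamber containing $c_B(\xi)$, namely $\mathrm{Amp}(X)$. So $\iota$ is effective.

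The second — and main — step is the strong Torelli theorem proper: an effective Hodge isometry $\iota\colon H_B^2(X',\mathbb Z)\overset{\sim}{\to} H_B^2(X,\mathbb Z)$ is induced by a unique isomorphism $f\colon X\overset{\sim}{\to} X'$. I would prove this by reduction to Kummer surfaces. When $X$ and $X'$ are Kummer surfaces $\mathrm{Km}(T),\mathrm{Km}(T')$ of complex $2$-tori, the classical Torelli theorem for complex tori, combined with the functoriality of the Kummer construction, produces the desired $f$ out of $\iota$. The general case then follows by a density and degeneration argument: using surjectivity of the period map one places an arbitrary marked pair in an analytic family over a disk meeting the (dense) locus of marked Kummer surfaces, transports the isomorphism obtained on the Kummer fibre along the family, and uses separatedness and finiteness of the relative isomorphism scheme to conclude that it specializes to an isomorphism $X\to X'$ inducing $\iota$. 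I expect this step to be the crux: it rests on the combinatorial fact that $\mathrm{Amp}(X)$ is a fundamental domain for the group generated by reflections in effective $(-2)$-classes acting on the positive cone, on surjectivity of the period map, and on density of Kummer points, each of which is substantial input.

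Finally, uniqueness. If $f,g\colon X\to X'$ both induce $\iota$, then $h:=g^{-1}\circ f$ is an automorphism of $X$ acting trivially on $H_B^2(X,\mathbb Z)$; in particular it fixes an ample class and acts trivially on $H^{2,0}(X)$. The classical fact that a nontrivial automorphism of a complex K3 surface acts nontrivially on $H^2(X,\mathbb Z)$ — which one proves by first reducing to finite order using the fixed ample class, then applying the topological and holomorphic Lefschetz fixed point formulas — then gives $h=\mathrm{id}_X$, hence $f=g$.
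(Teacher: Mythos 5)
The paper does not prove this statement: it is quoted verbatim as the classical Torelli theorem of Pjateckii-Shapiro and Shafarevich and cited as \cite{Torelli}, serving only as the geometric reformulation of injectivity of the period map. Your sketch is correct and reproduces essentially the strategy of that reference itself --- the reduction from ``maps polarization to polarization'' to effectivity via the wall-and-chamber structure of the positive cone, the Kummer-surface density and degeneration argument for the strong Torelli theorem, and the finite-order plus Lefschetz fixed point argument for uniqueness --- so it agrees with the (external) source the paper relies on rather than with any argument given in the paper.
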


\subsection{Shimura varieties of orthogonal type}\label{Shsection}

It is important to note that for $n \ge 3$ the target of the period map (\ref{pemap}) is in fact a Shimura variety. Indeed, the pair $(G, \mathcal{D})$ forms a Shimura datum in the sense of (\cite{DeligneShimura}, 1.5).
For a compact open subgroup $K \subset G(\mathbb{A}_f)$, the associated Shimura variety is defined as the double quotient \begin{equation}\label{ShSO} Sh_{K, \mathbb{C}}:= G(\mathbb{Q}) \backslash \mathcal{D} \times G(\mathbb{A}_f) / K. \end{equation}
We will mainly work with congruence subgroups, i.e. subgroups of the form
$$K_n := \ker(\mathcal{G}(\hat{\mathbb{Z}}) \to \mathcal{G}(\mathbb{Z}/n\mathbb{Z})),$$
and denote the corresponding Shimura variety by $Sh_{n, \mathbb{C}}:= Sh_{K_n, \mathbb{C}}.$

Using strong approximation for the Spin cover of $G$, one can show that
$$Sh_{n, \mathbb{C}} = \mathcal{G}_n(\mathbb{Z}) \backslash \mathcal{D},$$
observing that $\mathcal{G}_n(\mathbb{Z}) = K_n \cap G(\mathbb{Q})$.

As soon as $n \ge 3$, the congruence condition in (\ref{groupdef}) forces the determinant to be $1$, and hence the groups $\mathcal{O}_n(\mathbb{Z})= \mathcal{G}_n(\mathbb{Z})$ are equal.
In particular, for $n \ge 3$ the target of the period map $\Phi$ in the previous section is the Shimura variety $Sh_{n, \mathbb{C}}$.

By the general theory of Shimura varieties, $Sh_{n, \mathbb{C}}$ is a quasi-projective algebraic variety and has a canonical model $Sh_{n, \mathbb{Q}}$ defined over $\mathbb{Q}$.

A classical result of Borel (\cite{Borel}, Theorem 3.10) shows that the period map $\Phi: M_{2d,n,\mathbb{C}} \to Sh_{n, \mathbb{C}}$ is a morphism of complex algebraic varieties.

Assume now that $n = q^{r} \ge 3$ is the power of a prime number $q$ (cf. Remark \ref{remarkorientation}). The following result due to Rizov states that the period map in fact descents to a morphism over $\mathbb{Q}$.

\begin{thm}[\cite{RizovCrelle}, Theorem 3.9.1; \cite{MP}, Corollary 4.4]
The period map $\Phi$ descents to an open immersion
\begin{equation}\label{periodQ}
\Phi_{\mathbb{Q}}: M_{2d,n, \mathbb{Q}} \hookrightarrow Sh_{n,\mathbb{Q}}\end{equation}
of algebraic varieties over $\mathbb{Q}$.
\end{thm}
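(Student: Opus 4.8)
The statement splits into two parts, and I would dispatch them in opposite order of difficulty. Granting that $\Phi$ descends to a \emph{morphism} $M_{2d,n,\mathbb{Q}}\to Sh_{n,\mathbb{Q}}$ of $\mathbb{Q}$-schemes, the assertion that this morphism is an \emph{open immersion} is soft: by Borel's algebraicity theorem $\Phi_{\mathbb{C}}$ is a morphism of complex algebraic varieties, and by the Torelli theorem it is an open immersion, hence a flat monomorphism locally of finite presentation; but this last property is fpqc-local on the base, so it descends along $\operatorname{Spec}\mathbb{C}\to\operatorname{Spec}\mathbb{Q}$, and $\Phi_{\mathbb{Q}}$ is automatically an open immersion. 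All the content is therefore in constructing the descent of $\Phi$ to $\mathbb{Q}$.

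The plan for that is to show that $\Phi_{\mathbb{C}}$ is $\operatorname{Aut}(\mathbb{C}/\mathbb{Q})$-equivariant. Both $M:=M_{2d,n}$ and $Sh:=Sh_{n}$ are defined over $\mathbb{Q}$ (the former tautologically, the latter by the theory of canonical models), so for $\sigma\in\operatorname{Aut}(\mathbb{C}/\mathbb{Q})$ the conjugate $\Phi_{\mathbb{C}}^{\sigma}$ is again a morphism $M_{\mathbb{C}}\to Sh_{\mathbb{C}}$, and $\Phi$ descends to $\mathbb{Q}$ exactly when $\Phi_{\mathbb{C}}^{\sigma}=\Phi_{\mathbb{C}}$ for all $\sigma$, i.e. when $\Phi(X^{\sigma})=\sigma\cdot\Phi(X)$ for all K3 points $X$, where $\sigma$ acts on $M(\overline{\mathbb{Q}})$ by conjugation of K3 surfaces and on $Sh(\overline{\mathbb{Q}})$ through the $\mathbb{Q}$-structure of the canonical model. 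Since $Sh$ is separated and $M$ is reduced and of finite type over $\mathbb{Q}$, it suffices to verify this identity on a Zariski-dense set of points of $M$, and the natural choice is the set of points corresponding to K3 surfaces with complex multiplication, which is dense by density of special points in the Shimura variety together with the Torelli theorem. For such a CM K3 surface $X$, defined over $\overline{\mathbb{Q}}$, the image $\Phi_{\mathbb{C}}(X)$ is a special point of $Sh$, and two inputs now match term by term: first, $Sh_{n,\mathbb{Q}}$ being \emph{the} canonical model means its special points are defined over the predicted reflex fields with Galois action given by the reciprocity morphism of the Shimura datum $(G,\mathcal{D})$ (Deligne--Milne); and second, Rizov's main theorem of complex multiplication for K3 surfaces --- obtained by transferring, via the Kuga--Satake construction, the Shimura--Taniyama law for CM abelian varieties --- computes the action of $\sigma$ on the transcendental Hodge structure of $X$, hence on $\Phi_{\mathbb{C}}(X)$, through the very same reciprocity morphism. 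Comparing, $\Phi(X^{\sigma})=\sigma\cdot\Phi(X)$ on CM points, so $\Phi_{\mathbb{C}}^{\sigma}=\Phi_{\mathbb{C}}$ and $\Phi$ descends to the desired open immersion $\Phi_{\mathbb{Q}}$; this is essentially Rizov's argument \cite{RizovCrelle}.

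A variant, closer to the formulation used by Madapusi-Pera \cite{MP}, avoids descending a complex morphism and builds $\Phi_{\mathbb{Q}}$ directly from the Kuga--Satake abelian scheme: one shows this abelian scheme over $M_{2d,n,\mathbb{C}}$ descends to $M_{2d,n,\mathbb{Q}}$ --- using that the Kuga--Satake construction is motivated, so that a K3 surface over any field $K\subset\mathbb{C}$ carries a canonical Kuga--Satake abelian variety over $K$ compatibly in families --- and reads off from it a $\mathbb{Q}$-morphism to the Hodge-type Shimura variety of the spin group, whose rational structure is pinned down by Kisin's construction \cite{Intmod} and its moduli interpretation; projecting along the central isogeny of Shimura data then recovers $\Phi_{\mathbb{Q}}$. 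On either route the main obstacle is the same: the genuinely arithmetic input --- complex multiplication for K3 surfaces, respectively the motivated/algebraic descent of the Kuga--Satake abelian variety --- which is where one leaves the realm of formal descent and uses the specific geometry of K3 surfaces; everything else (spreading out, and the passage from ``morphism over $\mathbb{Q}$'' to ``open immersion over $\mathbb{Q}$'') is routine.
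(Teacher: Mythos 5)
Your proposal is correct and follows exactly the two routes the paper itself indicates for this cited result: the main argument is Rizov's, controlling the period map via the Galois action on the Zariski-dense set of CM points through the reciprocity law of the canonical model, and your variant is Madapusi-Pera's Kuga--Satake/absolute-Hodge descent. The preliminary reduction (that a $\mathbb{Q}$-morphism whose base change to $\mathbb{C}$ is an open immersion is itself an open immersion, by fpqc descent of that property) is also sound.
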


The proof in \cite{RizovCrelle} uses the fact that one can control the behaviour of the period map on the Zariski dense set of CM points.
A different approach (cf. \cite{MP}, Corollary 4.4) consists of showing that the period map is defined by a correspondence which is absolute Hodge.

The natural representation $\mathcal{G}_n(\mathbb{Z}) \to GL(\Lambda_d)$ defines a $\mathbb{Z}$-variation of Hodge structure $\mathbb{V}$ on $Sh_{n, \mathbb{C}}$, and the associated $\ell$-adic étale components $\mathbb{V}_{\ell}$ descent to the canonical model $Sh_{n, \mathbb{Q}}$. Similarly, the algebraic vector bundle with connection $(\mathcal{V}_{dR}, \nabla)$ associated with $\mathbb{V}$ descents to $Sh_{n, \mathbb{Q}}$. The period map (\ref{periodQ}) is defined in such a way that the variation of Hodge structure $\mathcal{P}_B$ on $M_{2d,n, \mathbb{C}}$ is just the pullback $\Phi^*\mathbb{V}$ of the variation of Hodge structure $\mathbb{V}$ on $Sh_{n,\mathbb{C}}$, and similarly $\mathcal{P}_{\ell} = \Phi_{\mathbb{Q}}^{*}\mathbb{V}_{\ell}$ for all $\ell$.

\subsection{The GSpin cover}

The orthogonal Shimura variety considered in the previous section has a natural cover by a Shimura variety of Hodge type associated to the group of spinor similitudes of the lattice $\Lambda_{d, \mathbb{Q}}$.
To define this group, denote by $$CL(\Lambda_{d,\mathbb{Q}}):= \left( \bigoplus_{n} \Lambda_{d, \mathbb{Q}}^{\otimes n} \right) / (v \otimes v - \langle v, v\rangle)$$ the Clifford algebra of the lattice $\Lambda_{d, \mathbb{Q}}$. It comes equipped with a $\mathbb{Z}/2\mathbb{Z}$-grading, giving a decomposition $CL(\Lambda_{d, \mathbb{Q}}) = CL(\Lambda_{d, \mathbb{Q}})^+ \oplus CL(\Lambda_{d, \mathbb{Q}})^-$.
\begin{mydef}
We define $\tilde{G}$ to be the group of spinor similitudes of the lattice $\Lambda_{d, \mathbb{Q}}$, i.e. the algebraic group over $\mathbb{Q}$ defined by the functor that sends a $\mathbb{Q}$-algebra $R$ to
$$\tilde{G}(R):= GSpin(\Lambda_{d, \mathbb{Q}})(R) = \{g \in (CL(\Lambda_{d, \mathbb{Q}})^+ \otimes_{\mathbb{Q}} R)^{\times} \, | \, g \Lambda_{d,R} g^{-1} \subset \Lambda_{d, R} \}.$$
\end{mydef}
The action of $\tilde{G}$ on $\Lambda_{d, \mathbb{Q}}$ by conjugation induces a short exact sequence
$$1 \to \mathbb{G}_m \to \tilde{G} \overset{ad}{\to} G \to 1,$$
which identifies $G$ with the adjoint group of $\tilde{G}$.

The datum $(\tilde{G}, \mathcal{D})$ forms a Shimura datum.
For a compact open subgroup $\tilde{K} \subset \tilde{G}(\mathbb{A}_f)$ we denote the associated Shimura variety by
\begin{equation} \label{ShSpin} \widetilde{Sh}_{\tilde{K}, \mathbb{C}} := \tilde{G}(\mathbb{Q}) \backslash \mathcal{D} \times \tilde{G}(\mathbb{A}_f) / \tilde{K}. \end{equation}
Again, this Shimura variety has a canonical model $\widetilde{Sh}_{\tilde{K}, \mathbb{Q}}$ over $\mathbb{Q}$.
For any $\tilde{K}$ such that $\tilde{K}^{ad} \subset K$, the morphism $\tilde{G} \to G$ gives rise to a morphism of Shimura varieties
$$ad: \widetilde{Sh}_{\tilde{K}, \mathbb{Q}} \to Sh_{K, \mathbb{Q}}.$$
We define $$\tilde{\mathcal{G}}(\hat{\mathbb{Z}}) := \tilde{G}(\mathbb{A}_f) \cap CL(\Lambda_{d, \hat{\mathbb{Z}}})^{\times},$$
introduce the congruence subgroups $$\tilde{K}_n:= \{g \in \tilde{\mathcal{G}}(\hat{\mathbb{Z}}) |\, g \equiv 1 \mod n \} $$
and write $\widetilde{Sh}_{n, \mathbb{Q}} := \widetilde{Sh}_{\tilde{K}_n, \mathbb{Q}}$.
One can check (compare \cite{MP2}, Lemma 2.6) that $\tilde{K}_n^{ad} \subset K_n$, and thus we get a morphism of Shimura varieties
$$ad: \widetilde{Sh}_{n, \mathbb{Q}} \to Sh_{n, \mathbb{Q}},$$
which is a finite étale cover (cf. \cite{MP}, 3.4).

It is well-known (compare \cite{MP2}, 3.5) that the Shimura variety $\widetilde{Sh}_{\tilde{K}, \mathbb{Q}}$ is of Hodge type, i.e. there exists a symplectic form on the Clifford algebra $CL(\Lambda_{d,\mathbb{Q}})$ such that the action of $\tilde{G}$ on $CL(\Lambda_{d, \mathbb{Q}})$ factors through the corresponding general symplectic group $GSp$, defining an embedding of Hodge data $(\tilde{G}, \mathcal{D}) \subset (GSp, \mathbb{H})$ into a Siegel Shimura datum.
For any compact open subgroup $\tilde{K} \subset \tilde{G}(\mathbb{A}_f) $, there is a compact open subgroup $\bar{K} \subset GSp(\mathbb{A}_f)$ such that the induced morphism of Shimura varieties
$$ \widetilde{Sh}_{\tilde{K}, \mathbb{Q}} \hookrightarrow Sh(GSp, \mathbb{H})_{\bar{K}, \mathbb{Q}}$$
is a closed immersion of the GSpin Shimura variety into a Siegel modular variety (cf. \cite{DeligneShimura}, Proposition 1.15).

Via its moduli description, the Siegel modular variety $Sh(GSp, \mathbb{H})_{\bar{K}}$ carries a univer\-sal family of abelian varieties.
By restriction, the Shimura variety $\widetilde{Sh}_{\tilde{K}, \mathbb{Q}}$ will also carry a family of abelian varieties $f: \mathfrak{A} \to \widetilde{Sh}_{\tilde{K}, \mathbb{Q}} $ with certain Hodge tensors, as we will now explain.

The representation $\tilde{G} \to GL(CL(\Lambda_{d, \mathbb{Q}}))$ and the lattice $CL(\Lambda_d)$ define a $\mathbb{Z}$-variation of Hodge structure $\tilde{\mathbb{V}}$ over $\widetilde{Sh}_{\tilde{K}, \mathbb{C}}$ which can naturally be identified with the variation of Hodge structure $R^1f^{an}_{\mathbb{C},*}\underline{\mathbb{Z}} $ arising from the family of abelian varieties $f$.
Similarly, we get $\ell$-adic étale local systems $\tilde{\mathbb{V}}_{\ell}$ which descent to $\widetilde{Sh}_{\tilde{K}, \mathbb{Q}}$ and are identified with
$ R^1f_{\acute{e}t, *} \underline{\mathbb{Z}_{\ell}}.$

Hodge type Shimura varieties usually carry additional generic Hodge tensors coming from the tensors fixed by the reductive group defining the Shimura datum.
In our case, one can show that there is a natural idempotent operator on $CL(\Lambda_{d})^{\otimes (1,1)} := CL(\Lambda_d) \otimes CL(\Lambda_d)^{\vee}$ fixed by $\tilde{G}$ (cf. \cite{MP2}, 3.4).
This gives rise to an idempotent operator $\pi$ on $\tilde{\mathbb{V}}^{\otimes (1,1)} = \tilde{\mathbb{V}} \otimes \tilde{\mathbb{V}}^{\vee}$ which is absolute Hodge, and thus to idempotent operators $\pi_{\ell}$ on $\tilde{\mathbb{V}}_{\ell}^{\otimes (1,1)}$.

We denote by $L_B = \mathrm{im \,} \pi \subset \tilde{\mathbb{V}}^{\otimes (1,1)} $ the sub-local system given by the image of this operator, and similarly $L_{\ell} = \mathrm{im \,} \pi_{\ell} \subset \tilde{\mathbb{V}}_{\ell}^{\otimes (1,1)} $.
One can show (\cite{MP}, 3.4) that there is a natural identification
$$L_B = ad_{\mathbb{C}}^{*}\mathbb{V},$$
and similarly, $L_{\ell} = ad^{*}\mathbb{V}_{\ell}$.

We end the section with a brief reminder of the Kuga-Satake construction from a Hodge theoretic point of view.
As suggested by the period map $\Phi$ and the above discussion of Shimura varieties, there is a closed relation between K3 surfaces and abelian varieties.
Namely, for a point $x \in M_{2d,n}(\mathbb{C})$ corresponding to a polarized K3 surface $(X, \xi)$, the image $\Phi(x) \in Sh_{n}(\mathbb{C})$ has a lift $\widetilde{\Phi(x)} \in \widetilde{Sh}_{n}(\mathbb{C})$ to the GSpin Shimura variety. The fiber over $\widetilde{\Phi(x)}$ of the universal abelian variety $\mathfrak{A}$ defines a complex abelian variety $A$ that is called the \emph{Kuga-Satake abelian variety} of $X$. It does not depend on the choice of the level-$n$ structure.

We have the following description of the morphism of real algebraic groups $\rho_A: \mathbb{S} \to GL(H_B^1(A, \mathbb{R}))$ defining the Hodge structure on $H_B^1(A, \mathbb{Q})$.
Denote by $\rho_X: \mathbb{S} \to SO(P_B^2(X,\mathbb{R}))$ the representation of the Deligne torus corresponding to the Hodge structure $P_B^2(X, \mathbb{Q})(1)$ of weight zero (the Tate twist is essential here).
There exists a lift
\begin{equation}\label{KS} \xymatrix{\mathbb{S} \ar[r]^-{\tilde{\rho}_X} \ar[rd]_-{\rho_X} & GSpin(P_B^2(X, \mathbb{R})) \ar[d]^-{ad} \\
& SO(P_B^2(X,\mathbb{R})),} \end{equation}
which via the natural action of $GSpin(P_B^2(X, \mathbb{R}))$ defines a Hodge structure on the Clifford algebra $H_B^1(A, \mathbb{Q}) = CL(P_B^2(X, \mathbb{Q}))$. The lift in (\ref{KS}) becomes unique after requiring that this Hodge structure is of weight one (cf. \cite{K3global}, Remark 4.2.1).

\section{Rapoport-Zink uniformization of Shimura varieties of abelian type}

We survey the theory \cite{Intmod} of integral canonical models for abelian type Shimura varieties at hyperspecial level, and their relation to local Shimura varieties as defined in \cite{Berkeley} via the Rapoport-Zink uniformization established in \cite{Xu}.

\subsection{Integral canonical models}\label{Subsectionintcanmodels}

Let $(G,\mathcal{D})$ be a Shimura datum of abelian type, cf. (\cite{Milnemotives}, Definition 3.2). For a compact open subgroup $K \subset G(\mathbb{A}_f)$ we denote by
$$Sh(G,\mathcal{D})_K= G(\mathbb{Q}) \backslash \mathcal{D} \times G(\mathbb{A}_f) /K$$ the Shimura variety corresponding to the Shimura datum $(G,\mathcal{D})$ and the level $K$.
It is well-known (see \cite{Delignemodels}, 2.7.21) that $Sh(G,\mathcal{D})_K$ has the structure of a quasi-projective algebraic variety defined over a number field $E$, called the reflex field of the Shimura datum $(G, \mathcal{D})$.
Assume that $K_p$ is a hyperspecial subgroup of $G(\mathbb{Q}_p)$, i.e. that there exists a reductive group $\mathcal{G}$ over $\mathbb{Z}_p$ with generic fiber $G_{\mathbb{Q}_p}$ such that $K_p = \mathcal{G}(\mathbb{Z}_p)$.
We look at the pro-scheme
\begin{equation}\label{system} Sh(G,\mathcal{D})_{K_p}:= \varprojlim_{K^p} Sh(G,\mathcal{D})_{K_pK^p} \end{equation}
where $K^p$ runs through the compact open subgroups of $G(\mathbb{A}_f^p)$.

Let $\mathcal{O}$ denote the ring of integers of $E$ and $\mathcal{O}_{(v)}$ the localization at a prime $v$ dividing $p$.
Langlands suggested that the system in (\ref{system}) extends to a pro-system $\mathscr{S}(G,\mathcal{D})_{K_p K^p}$ of smooth schemes over the ring of integers $\mathcal{O}_{(v)}$.

Milne formulated the following extension property in order to uniquely characterize this extension.
\begin{mydef}[\cite{Intmod}, 2.3.7; \cite{Milnepoints}, Definition 2.5]
The pro-scheme $\mathscr{S}(G,\mathcal{D})_{K_p}$ is said to satisfy the \emph{extension property} if the following criterion holds:
For every regular and formally smooth $\mathcal{O}_{(v)}$-scheme $\mathcal{R}$ with generic fiber $R$, any morphism $R \to Sh(G,\mathcal{D})_{K_p}$ over $E$ extends to $\mathcal{R} \to \mathscr{S}(G,\mathcal{D})_{K_p}$.
An \emph{integral canonical model} for $Sh(G,\mathcal{D})_{K_p}$ is a smooth model of $Sh(G,\mathcal{D})_{K_p}$ over $\mathcal{O}_{(v)}$ which satisfies the extension property.
\end{mydef}

Kisin proved the existence of integral canonical models in great generality.

\begin{thm}[\cite{Intmod}]\label{integralmodel}
Suppose that $p>2$. Let $(G,\mathcal{D})$ be a Shimura datum of abelian type and $K_p$ a hyperspecial subgroup of $G(\mathbb{Q}_p)$.
Let $\mathcal{O}$ denote the ring of integers of the reflex field $E$ and $\mathcal{O}_{(v)}$ the localization at a prime $v \mid p$.
Then there exists a unique pro-system $\mathscr{S}(G,\mathcal{D})_{K_pK^p}$ over $\mathcal{O}_{(v)}$ such that the pro-scheme $$\mathscr{S}(G,\mathcal{D})_{K_p}:= \varprojlim_{K^p} \mathscr{S}(G,\mathcal{D})_{K_pK^p}$$ is an integral canonical model of $Sh(G,\mathcal{D})_{K_p}$.
\end{thm}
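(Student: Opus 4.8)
The plan is to deduce Kisin's theorem on integral canonical models of abelian type Shimura varieties from the corresponding result for Hodge type Shimura varieties, following the strategy of \cite{Intmod}. The statement asserts both \emph{existence} and \emph{uniqueness}, and uniqueness is the easier half: since any two integral canonical models satisfy the extension property, the identity map on the generic fiber extends (in both directions) to maps between them by the defining property applied to a smooth model itself, and these extensions are mutually inverse by the separatedness that comes from the valuative criterion, so the model is unique up to unique isomorphism. Hence the bulk of the argument is existence.

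For the existence part I would proceed in two stages. First, treat the \emph{Hodge type} case: embed $(G,\mathcal{D})$ into a Siegel Shimura datum $(GSp, \mathbb{H})$ as a sub-Shimura datum (this is possible by definition of Hodge type, and was recalled above for the GSpin case), choose a $\mathbb{Z}_p$-lattice in the symplectic space on which $K_p$ acts and that is self-dual, and take the Zariski closure $\mathscr{S}_K$ of $Sh(G,\mathcal{D})_K$ inside the (smooth, proper-away-from-the-boundary) Siegel integral model $\mathscr{A}_{g,N}$ over $\mathbb{Z}_{(p)}$, then normalize. The key point is to prove this normalization is smooth over $\mathcal{O}_{(v)}$; this is where the heavy input enters, via Kisin's theory of $\mathcal{G}$-adapted deformations and the comparison between the deformation theory of $p$-divisible groups (Grothendieck-Messing, Dieudonné theory) and that of the crystalline tensors cutting out $G$, i.e.\ one shows the local structure is governed by the smooth group scheme $\mathcal{G}$ and its parabolic $P_\mu$. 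One then checks the extension property using the Néron-Ogg-Shafarevich criterion for abelian schemes together with the fact that the defining tensors are horizontal, so that a point of $Sh(G,\mathcal{D})_{K_p}(R)$ valued in the fraction field of a regular formally smooth ring extends to an integral point.

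Second, pass from Hodge type to \emph{abelian type}. Here one uses that an abelian type datum $(G,\mathcal{D})$ is connected to a Hodge type datum $(G_1,\mathcal{D}_1)$ with $G_1^{\mathrm{ad}} = G^{\mathrm{ad}}$ via an isogeny $G_1^{\mathrm{der}} \to G^{\mathrm{der}}$, and Deligne's construction realizes the connected Shimura variety $Sh^\circ(G,\mathcal{D})$ as a quotient of $Sh^\circ(G_1,\mathcal{D}_1)$ by an action of a suitable arithmetic group, or more precisely builds $Sh(G,\mathcal{D})_K$ from $Sh^\circ(G_1,\mathcal{D}_1)$ by twisting with a $\pi_0$-torsor. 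One transports the integral model of the connected component across this construction — the main subtlety is that the hyperspecial condition on $K_p$ must be matched on the $G_1$-side, which uses that hyperspecial points of $G(\mathbb{Q}_p)$ correspond to hyperspecial points of $G_1(\mathbb{Q}_p)$ up to the center — and then descends along the finite covering $Sh(G_1,\mathcal{D}_1)_{K_1} \to Sh(G,\mathcal{D})_K$, taking normalizations and checking that the resulting $\mathcal{O}_{(v)}$-scheme is smooth (étale-locally it looks like the Hodge type model) and satisfies the extension property (which is inherited because the covering is finite étale on generic fibers and the source has the property).

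The main obstacle, and the technical heart of \cite{Intmod}, is the smoothness of the Hodge type integral model: showing that the normalization of the Zariski closure in the Siegel model is formally smooth over $\mathcal{O}_{(v)}$. This requires constructing, at each mod-$p$ point, enough Tate/crystalline tensors on the $p$-divisible group of the abelian variety, proving they are ``$\mathcal{G}$-adapted'', and then identifying the local deformation ring with the completed local ring of the smooth affine group-scheme-theoretic object $\widehat{\mathcal{G}/P_\mu}$; this is a delicate interplay of integral $p$-adic Hodge theory (Breuil-Kisin modules, crystalline comparison) and deformation theory. Everything downstream — the passage to abelian type, the descent, and the extension property — is comparatively formal once this smoothness and the requisite functoriality of the Kuga-Satake/Shimura-variety formalism are in place.
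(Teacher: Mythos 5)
This theorem is not proved in the paper: it is quoted verbatim from Kisin's work \cite{Intmod}, and the text surrounding it is purely a survey. Your sketch is a faithful outline of Kisin's actual strategy --- uniqueness via the extension property applied in both directions, existence for Hodge type via normalization of the Zariski closure in a Siegel integral model with smoothness established through $\mathcal{G}$-adapted deformation theory of $p$-divisible groups with crystalline tensors, and the passage to abelian type via Deligne's connected-components construction and descent --- and you correctly identify the smoothness of the Hodge type model as the technical heart that your sketch defers rather than supplies. Since the paper offers no proof to compare against, there is nothing further to reconcile.
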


We turn back to the Shimura variety considered in Section \ref{Shsection}, where $G$ is the special orthogonal group $SO(\Lambda_{d, \mathbb{Q}})$.
For $p \nmid 2d$, the group $K_p = \mathcal{G}(\mathbb{Z}_p)$ as defined in (\ref{integralgroup}) in Section \ref{sectioncoho} is a hyperspecial subgroup of $G(\mathbb{Q}_p)$.
Assuming in addition that $p \nmid n$, Theorem \ref{integralmodel} provides us with an integral canonical model $\mathscr{S}_{n}$ over $\mathbb{Z}_{(p)}$ of the Shimura variety $Sh_{n, \mathbb{Q}}$ defined in (\ref{ShSO}).
Similarly, for $p \nmid 2d$ and $p \nmid n$, the GSpin Shimura variety $\widetilde{Sh}_{n, \mathbb{Q}}$ considered in (\ref{ShSpin}) admits an integral canonical model $\widetilde{\mathscr{S}}_{n}$ over $\mathbb{Z}_{(p)}$.

By construction, the universal abelian scheme $\mathfrak{A}$ over $\widetilde{Sh}_{n}$ extends to a universal abelian scheme $f: \mathcal{A} \to
\widetilde{\mathscr{S}}_{n}$.
Similarly, the $\ell$-adic étale local systems $\tilde{\mathbb{V}}_{\ell}$ for $\ell \not= p$ extend to $\widetilde{\mathscr{S}}_{n}$.
In characteristic $p$, there is a new feature: the $F$-crystal
$$ \tilde{\mathbb{V}}_{cris} := R^1 \overline{f}_{*,cris} \mathcal{O}_{\overline{\mathcal{A}}/W}$$
over $\widetilde{\mathscr{S}}_{n, \bar{\mathbb{F}}_p}$.
Here $\overline{f}: \overline{\mathcal{A}} \to \widetilde{\mathscr{S}}_{n, \bar{\mathbb{F}}_p}$ denotes the base change of $f$ to $\bar{\mathbb{F}}_p$.

Since $\widetilde{\mathscr{S}}_{n}$ is normal, the idempotent operator $\pi_{\ell}$ on $\tilde{\mathbb{V}}_{\ell}^{\otimes (1,1)}$ over $\widetilde{Sh}_{n}$ extends to an operator over $\widetilde{\mathscr{S}}_{n}$.
In addition, there is a natural $F$-invariant idempotent operator $\pi_{cris}$ on the crystal $\tilde{\mathbb{V}}_{cris}^{\otimes (1,1)}$ (cf. \cite{MP2}, 4.14).

As a consequence, if $F$ is an algebraically closed field of characteristic $p$ and $x \in \widetilde{\mathscr{S}}_{n}(F)$, the operators $\pi_{\ell}$ and $\pi_{cris}$ define distinguished subspaces $$L_{\ell,x}:=\mathrm{im}(\pi_{\ell,x}) \subset \mathrm{End}(H_{\acute{e}t}^1(\mathcal{A}_x, \mathbb{Z}_{\ell}))$$ for $\ell \not=p$, and
$$L_{cris,x}:=\mathrm{im}(\pi_{cris,x}) \subset \mathrm{End}(H_{cris}^1(\mathcal{A}_x/W(F))).$$

Following Remark \ref{remarkorientation}, assume now that $n = q^{r}$ is sufficiently large and the power of a prime number $q \not=p$. The integral canonical model can be used to extend the period map for the moduli space of polarized K3 surfaces over $\mathbb{Z}_{(p)}$.

\begin{prop}[\cite{RizovCrelle}; \cite{MP}, Corollary 4.15]\label{integralext}
For $p \nmid 2d$, the period map $\Phi_{\mathbb{Q}}: M_{2d,n, \mathbb{Q}} \hookrightarrow Sh_{n, \mathbb{Q}}$ extends to an open immersion
$$ \varphi: M_{2d,n, \mathbb{Z}_{(p)}} \hookrightarrow \mathscr{S}_{n}$$ over $\mathbb{Z}_{(p)}$.
\end{prop}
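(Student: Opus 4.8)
The plan is to construct the extension $\varphi$ first as a morphism of schemes, using the extension property of the integral canonical model (Theorem~\ref{integralmodel}), and then to promote it to an open immersion by a deformation-theoretic computation combined with a formal argument exploiting flatness over $\mathbb{Z}_{(p)}$. For the construction: since $p \nmid 2d$, the scheme $M_{2d,n,\mathbb{Z}_{(p)}}$ is smooth over $\mathbb{Z}_{(p)}$, and enlarging the level along prime-to-$p$ integers refining $n$ yields finite étale covers of it (using oriented level structures in the sense of \cite{MP} where needed). Let $\mathcal{R} := \varprojlim_m M_{2d,m,\mathbb{Z}_{(p)}}$ be the inverse limit over all such levels $m$; it is a torsor over $M_{2d,n,\mathbb{Z}_{(p)}}$ under the prime-to-$p$ congruence subgroup $K_n^p$, it is regular and formally smooth over $\mathbb{Z}_{(p)}$, and its generic fibre $\mathcal{R}_{\mathbb{Q}}$ carries the compatible, Hecke-equivariant family of period maps, which assemble into a morphism $\mathcal{R}_{\mathbb{Q}} \to \varprojlim_m Sh_{m,\mathbb{Q}} = Sh(G,\mathcal{D})_{K_p}$ over $\mathbb{Q}$. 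By the extension property this extends to a morphism $\mathcal{R} \to \mathscr{S}(G,\mathcal{D})_{K_p} = \varprojlim_m \mathscr{S}_m$. Composing with the projection to $\mathscr{S}_n$ gives a morphism $\mathcal{R} \to \mathscr{S}_n$; it is $K_n^p$-invariant, because it is so on the schematically dense generic fibre (there it factors through $M_{2d,n,\mathbb{Q}} \xrightarrow{\Phi_{\mathbb{Q}}} Sh_{n,\mathbb{Q}}$) and $\mathscr{S}_n$ is separated over $\mathbb{Z}_{(p)}$, so it descends along the torsor $\mathcal{R} \to M_{2d,n,\mathbb{Z}_{(p)}}$ to the desired morphism $\varphi: M_{2d,n,\mathbb{Z}_{(p)}} \to \mathscr{S}_n$ extending $\Phi_{\mathbb{Q}}$.

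Next one shows that $\varphi$ is étale. Both source and target are smooth over $\mathbb{Z}_{(p)}$ of relative dimension $19$, and along the generic fibre the differential $d\varphi$ is an isomorphism since $\varphi_{\mathbb{Q}} = \Phi_{\mathbb{Q}}$ is an open immersion. At a point $x$ of the special fibre, the deformation space of the primitively polarized K3 surface $(\mathfrak{X}_x, \mathfrak{L}_x)$ (with its level structure) is smooth of dimension $19$; under the crystalline comparison of \cite{MP}, which identifies $P^2_{cris}(\mathfrak{X}_x/W)(1)$ together with its Hodge filtration with the crystalline realization of $\mathscr{S}_n$ at $\varphi(x)$, compatibly in families, the Kodaira--Spencer map of the universal polarized K3 surface over $M_{2d,n,\mathbb{Z}_{(p)}}$ matches the one of $\mathscr{S}_n$ pulled back along $\varphi$; since both are isomorphisms, $d\varphi$ is an isomorphism at $x$ as well. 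As $d\varphi$ is everywhere bijective and both $M_{2d,n,\mathbb{Z}_{(p)}}$ and $\mathscr{S}_n$ are smooth over $\mathbb{Z}_{(p)}$, the morphism $\varphi$ is étale. This deformation-theoretic input is the genuinely substantial part of the argument, resting ultimately on Madapusi-Pera's construction of integral models of GSpin Shimura varieties and the integral Kuga--Satake correspondence; the first and third steps are formal.

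Finally, $\varphi$ is an open immersion. Recall $\mathfrak{M}_{2d}$ is separated, hence so is $M_{2d,n,\mathbb{Z}_{(p)}}$, and $\mathscr{S}_n$ is separated over $\mathbb{Z}_{(p)}$; thus $\varphi$ is separated and the diagonal $\Delta_{\varphi}: M_{2d,n,\mathbb{Z}_{(p)}} \to P := M_{2d,n,\mathbb{Z}_{(p)}} \times_{\mathscr{S}_n} M_{2d,n,\mathbb{Z}_{(p)}}$ is a closed immersion; it is also an open immersion because $\varphi$ is unramified, so $\Delta_{\varphi}$ identifies $M_{2d,n,\mathbb{Z}_{(p)}}$ with an open and closed subscheme of $P$. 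Now the first projection $P \to M_{2d,n,\mathbb{Z}_{(p)}}$ is a base change of the étale morphism $\varphi$, so $P$ is étale over $M_{2d,n,\mathbb{Z}_{(p)}}$ and hence flat and of finite type over $\mathbb{Z}_{(p)}$; therefore its generic fibre $P_{\mathbb{Q}}$ is schematically dense in $P$. Since $\Phi_{\mathbb{Q}}$ is a monomorphism, $\Delta_{\varphi}$ restricts over $\mathbb{Q}$ to an isomorphism $M_{2d,n,\mathbb{Q}} \xrightarrow{\sim} P_{\mathbb{Q}}$, so the open and closed subscheme $\Delta_{\varphi}(M_{2d,n,\mathbb{Z}_{(p)}}) \subseteq P$ contains the dense subset $P_{\mathbb{Q}}$ and is therefore all of $P$. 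Hence $\Delta_{\varphi}$ is an isomorphism, i.e. $\varphi$ is a monomorphism; and an étale monomorphism is an open immersion, which completes the argument.
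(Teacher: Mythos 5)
Your proposal is correct and matches the paper's approach: the paper's own proof consists exactly of the observation that the extension exists by the extension property of $\mathscr{S}_{n}$ applied to the (pro-)system of smooth schemes $M_{2d,m, \mathbb{Z}_{(p)}}$, with the open-immersion property delegated entirely to \cite{RizovCrelle} and (\cite{MP}, Corollary 4.15). Your steps 2 and 3 (étaleness via the crystalline Kodaira--Spencer comparison, then the monomorphism property by spreading out the diagonal from the schematically dense generic fibre) faithfully reconstruct the argument of the cited reference, so nothing is missing relative to the paper's own level of detail.
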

This follows from the extension property of $\mathscr{S}_{n}$ together with the fact that $M_{2d,n, \mathbb{Z}_{(p)}}$ is smooth if $p \nmid 2d$.

Madapusi-Pera shows that one can even define a period map over $\mathbb{Z}[\frac{1}{2}]$ (cf. \cite{MP}, Proposition 4.7).

\subsection{Local Shimura varieties}

We recall the notion of a local Shimura variety as a local analog of a Shimura variety, following the definition in \cite{Berkeley}.

Let $\mathrm{Perf}_{\bar{\mathbb{F}}_p}$ denote the category of perfectoid spaces over $\bar{\mathbb{F}}_p$, cf. (\cite{Berkeley}, Definition 7.1.2).
For a perfectoid space $T$ in $\mathrm{Perf}_{\bar{\mathbb{F}}_p}$ we denote by $X_{FF,T}$ the associated relative Fargues-Fontaine curve over $T$, see (\cite{Berkeley}, Definition 15.2.6).

Let $G$ be a reductive group over $\mathbb{Q}_p$.

\begin{prop}[\cite{Berkeley}, Theorem 22.5.2] \label{torsors}
There is an equivalence of categories between the category of $G$-torsors on the relative Fargues-Fontaine curve $X_{FF,T}$ which are trivial at each geometric point of $T$ and the category of pro-étale $\underline{G(\mathbb{Q}_p)}$-torsors on $T$.
\end{prop}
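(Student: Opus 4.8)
To prove the equivalence, the plan is to deduce the case of a general reductive $G$ from the case $G=GL_n$ via the Tannakian formalism, and to settle the linear case by a direct analysis of the cohomology of the relative Fargues--Fontaine curve.

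First I would write down the functor in the direction that is easy to define. Let $\pi\colon X_{FF,T}\to T$ be the projection. Given a pro-étale $\underline{G(\mathbb{Q}_p)}$-torsor $\mathbb{P}$ on $T$, I pull it back along $\pi$ and push it out along the inclusion $\underline{G(\mathbb{Q}_p)}\hookrightarrow G$ (a locally constant $\mathbb{Q}_p$-valued section of the scheme $G$ is an honest section), obtaining a $G$-torsor $\mathcal{E}_{\mathbb{P}}$ on $X_{FF,T}$. At a geometric point $\mathrm{Spa}(C,C^+)\to T$ the torsor $\mathbb{P}$ is trivial, since the pro-étale $H^1$ of a geometric point with coefficients in $\underline{G(\mathbb{Q}_p)}$ is trivial; hence $\mathcal{E}_{\mathbb{P}}$ restricts to the trivial $G$-torsor on $X_{FF,C}$. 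So $\mathbb{P}\mapsto\mathcal{E}_{\mathbb{P}}$ takes values in the subcategory of geometrically trivial $G$-torsors, and it remains to see it is an equivalence.

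Next I would reduce to the linear case. A $G$-torsor $\mathcal{E}$ on $X_{FF,T}$ is the same datum as an exact $\mathbb{Q}_p$-linear tensor functor $\omega_{\mathcal{E}}\colon\mathrm{Rep}_{\mathbb{Q}_p}(G)\to\mathrm{Bun}(X_{FF,T})$, $V\mapsto\mathcal{E}\times^{G}V$; geometric triviality of $\mathcal{E}$ is equivalent to $\omega_{\mathcal{E}}$ factoring through the full subcategory $\mathrm{Bun}^{0}(X_{FF,T})$ of vector bundles that are trivial at each geometric point of $T$ (one direction is the associated-bundle construction, the other is Tannakian reconstruction combined with the linear case). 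Dually, a pro-étale $\underline{G(\mathbb{Q}_p)}$-torsor on $T$ is the same as an exact tensor functor $\mathrm{Rep}_{\mathbb{Q}_p}(G)\to\mathrm{Loc}_{\mathbb{Q}_p}(T)$ into the category of pro-étale $\mathbb{Q}_p$-local systems on $T$. Thus the proposition will follow from a tensor-equivalence $\mathrm{Bun}^{0}(X_{FF,T})\simeq\mathrm{Loc}_{\mathbb{Q}_p}(T)$ compatible with restriction to geometric fibres, after unwinding the constructions to see that the induced functor is inverse to $\mathbb{P}\mapsto\mathcal{E}_{\mathbb{P}}$. For the linear equivalence the inputs I would use are $R\pi_{*}\mathcal{O}_{X_{FF,T}}=\underline{\mathbb{Q}_p}$ (the curve over a geometric point has $H^0(\mathcal{O})=\mathbb{Q}_p$ and $H^1(\mathcal{O})=0$, and this persists in families by the base-change theorem for $R\pi_{*}$) and the semicontinuity of Harder--Narasimhan polygons on $X_{FF}$ in families, which makes the locus of fibrewise-trivial bundles open and forces such a bundle to become trivial after a pro-étale cover of $T$ (cf. Kedlaya--Liu and \cite{Berkeley}). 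Granting these, for $\mathcal{E}\in\mathrm{Bun}^{0}(X_{FF,T})$ the complex $R\pi_{*}\mathcal{E}$ is by base change a pro-étale $\mathbb{Q}_p$-local system $\mathbb{L}_{\mathcal{E}}$ placed in degree $0$, the adjunction map $\pi^{*}\mathbb{L}_{\mathcal{E}}\otimes_{\underline{\mathbb{Q}_p}}\mathcal{O}_{X_{FF,T}}\to\mathcal{E}$ is an isomorphism since it is one on each geometric fibre, and the assignments $\mathcal{E}\mapsto\mathbb{L}_{\mathcal{E}}$ and $\mathbb{L}\mapsto\pi^{*}\mathbb{L}\otimes_{\underline{\mathbb{Q}_p}}\mathcal{O}_{X_{FF,T}}$ are mutually inverse, exact, and monoidal by the projection formula.

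I expect the main obstacle to be precisely this linear statement: showing that a fibrewise-trivial vector bundle on the relative Fargues--Fontaine curve becomes trivial pro-étale-locally on the base, with $R\pi_{*}$ a local system. This is where the actual geometry of $X_{FF,T}$ enters --- through the semicontinuity of HN polygons and the structure of $\mathrm{Bun}$ in families --- whereas everything else (the Tannakian dictionary for $G$-torsors, the vanishing of $H^1_{\mathrm{pro\text{-}et}}$ of a point with coefficients in $\underline{G(\mathbb{Q}_p)}$, and the pro-étale descent gluing the fibrewise assertions) is formal.
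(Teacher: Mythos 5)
The paper does not prove this proposition; it is quoted verbatim from \cite{Berkeley}, Theorem 22.5.2, so there is no internal proof to compare against. Your overall strategy --- define $\mathbb{P}\mapsto\mathcal{E}_{\mathbb{P}}$ by pullback and pushout, reduce to $GL_n$ by the Tannakian dictionary of (\cite{Berkeley}, Theorem 19.5.2), and isolate as the hard geometric input the Kedlaya--Liu statement that a fibrewise-trivial vector bundle on $X_{FF,T}$ is pro-\'etale-locally trivial with $\pi_*\mathcal{E}$ a $\mathbb{Q}_p$-local system and $\pi_*\mathcal{O}_{X_{FF,T}}=\underline{\mathbb{Q}_p}$ --- is exactly the strategy of the cited reference, and you correctly locate where the actual geometry enters.

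There is, however, a genuine gap in your Tannakian reduction. You assert that geometric triviality of a $G$-torsor $\mathcal{E}$ is \emph{equivalent} to $\omega_{\mathcal{E}}$ landing in $\mathrm{Bun}^0(X_{FF,T})$, and dually that a pro-\'etale $\underline{G(\mathbb{Q}_p)}$-torsor is the \emph{same} as an exact tensor functor $\mathrm{Rep}_{\mathbb{Q}_p}(G)\to\mathrm{Loc}_{\mathbb{Q}_p}(T)$. Both biconditionals fail whenever $H^1(\mathbb{Q}_p,G)\neq 1$ --- which is the case for the orthogonal groups in this paper. Concretely, over a geometric point $G$-torsors on $X_{FF,C}$ are classified by $B(G)$ (Fargues), and for $b$ basic with $\nu_b=0$ but $\kappa(b)$ a nontrivial torsion class (i.e.\ $b$ in the image of a nontrivial class of $H^1(\mathbb{Q}_p,G)$), every associated bundle $\mathcal{E}_b\times^G V$ is semistable of slope $0$, hence trivial, while $\mathcal{E}_b$ itself is not; symmetrically, an exact tensor functor $\mathrm{Rep}(G)\to\mathrm{Vec}_{\mathbb{Q}_p}$ is a possibly nontrivial $G$-torsor over $\mathbb{Q}_p$ and does not come from a (necessarily trivial) $\underline{G(\mathbb{Q}_p)}$-torsor on the point. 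The two errors are ``parallel,'' so your argument can be repaired: on both sides one must restrict to tensor functors that are \emph{pointwise isomorphic to the standard fibre functor}, and one must check that this condition is preserved by the linear equivalence $\mathrm{Bun}^0(X_{FF,T})\simeq\mathrm{Loc}_{\mathbb{Q}_p}(T)$ and that a pointwise-standard tensor functor into $\mathrm{Loc}_{\mathbb{Q}_p}(T)$ is pro-\'etale-locally standard. Alternatively, and closer to the argument in the reference, one avoids the issue by proving directly that a $G$-torsor trivial at every geometric point is trivial pro-\'etale-locally on $T$, and then matching descent data through the identification $\underline{\mathrm{Aut}}(\mathcal{E}_{triv})=G(\pi_*\mathcal{O}_{X_{FF,T}})=\underline{G(\mathbb{Q}_p)}$. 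A second, smaller imprecision: there is no off-the-shelf ``base-change theorem for $R\pi_*$'' here; the computation of $R\pi_*$ of a fibrewise-trivial bundle as a local system in degree $0$ is itself part of the Kedlaya--Liu input you are invoking.
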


Here $\underline{G(\mathbb{Q}_p)}$ denotes the pro-étale sheaf on $\mathrm{Perf}_{\bar{\mathbb{F}}_p}$ attached to the locally profinite group $G(\mathbb{Q}_p)$.

\begin{mydef}[\cite{Berkeley}, Definition 24.1.1]\textnormal{ }
\begin{enumerate}[(i)]
\item A \emph{local Shimura datum} is a triple $(G, b, \mu)$ consisting of a reductive group $G$ over $\mathbb{Q}_p$, the $G(\bar{\mathbb{Q}}_p)$-conjugacy class of a minuscule cocharacter $\mu: \mathbb{G}_{m, \bar{\mathbb{Q}}_p} \to G_{\bar{\mathbb{Q}}_p}$ and $b \in B(G, \mu)$.
\item A \emph{morphism of local Shimura data} from $(G,b,\mu)$ to $(G', b', \mu')$ is a morphism of reductive groups $G \to G'$ over $\mathbb{Q}_p$ that maps the conjugacy class of $\mu$ to the conjugacy class of $\mu'$ and $b$ to $b'$.
\end{enumerate}
\end{mydef}
The reflex field of the local Shimura datum is the field of definition $E$ of the conjugacy class of $\mu$.

There is a natural way to associate a $G$-torsor $\mathcal{E}_b$ over $X_{FF,T}$ to an element $b \in B(G)$, see (\cite{Curve}, Theorem 4.1).

\begin{mydef}[\cite{Berkeley}, Proposition 23.3.1]
Let $(G, b, \mu)$ be a local Shimura datum and $K \subset G(\mathbb{Q}_p)$ a compact open subgroup.
Define the functor $Sht(G, b, \mu)_K$ on $Perf_{\bar{\mathbb{F}}_p}$ by sending $T \in Perf_{\bar{\mathbb{F}}_p}$ to the set of quadruples $(T^{\sharp},\mathcal{E}_1, \alpha, \mathbb{P})$ where
\begin{itemize}
\item $T^{\sharp}$ is an untilt of $T$;
\item
$\mathcal{E}_1$ is a $G$-torsor on $X_{FF, T}$ which is trivial at every geometric point of $T$;
\item
$\alpha$ is an isomorphism of $G$-torsors $$\alpha: \left. \mathcal{E}_1\right|_{X_{FF,T} \setminus T^{\sharp}} \cong \left. \mathcal{E}_b\right|_{X_{FF,T} \setminus T^{\sharp}} $$ which is meromorphic along $T^{\sharp}$ and bounded by $\mu$;
\item
$\mathbb{P}$ is a pro-étale $\underline{K}$-torsor on $T$ such that $\mathbb{P} \times^{\underline{K}} \underline{G(\mathbb{Q}_p)}$ is the pro-étale $\underline{G(\mathbb{Q}_p)}$-torsor corresponding to $\mathcal{E}_1$ under Proposition \ref{torsors}.
\end{itemize}
\end{mydef}
Scholze shows in (\cite{Berkeley}, Remark 23.3.4) that $Sht(G, b, \mu)_K$ is a diamond over $(\mathrm{Spa \,} \breve{E})^{\diamond}$,
where $\breve{E}$ denotes the maximal unramified extension of $E$.

Note that $Sht(G,b,\mu)_K$ has a natural action of the group $$J_b(\mathbb{Q}_p) =\{g \in G(\breve{\mathbb{Q}}_p) \,|\, \sigma(g)^{-1} b g = b \}.$$
Let $\mathcal{F}(G,\mu) = G_{\breve{E}} / P_{\mu}$ be the flag variety associated with the pair $(G, \mu)$, i.e. $$P_{\mu}= \{g \in G_{\breve{E}} \,|\, \lim_{t \to \infty} \mu(t) g \mu(t)^{-1} \,\,\textnormal{exists}\}. $$
In (\cite{Berkeley}, 24.1) one can find the construction of an étale period map $$\pi_{GM}: Sht(G, b, \mu)_K \to \mathcal{F}(G, \mu)^{\diamond},$$ where the target is the diamond corresponding to the smooth rigid space $\mathcal{F}(G, \mu)$ over $\breve{E}$. In view of (\cite{Berkeley}, Theorem 10.4.2), it follows that $Sht(G,b,\mu)_K = \mathcal{M}(G,b,\mu)_K^{\diamond}$ is the diamond of a unique smooth rigid space $\mathcal{M}(G,b,\mu)_K$.
Here we view rigid spaces over $\breve{E}$ as adic spaces which are locally of finite type over $\breve{E}$.

A morphism of local Shimura data from $(G,b, \mu)$ to $(G', b', \mu')$ that maps $K$ to $K'$ induces a morphism of rigid spaces
$ \mathcal{M}(G,b,\mu)_K \to \mathcal{M}(G',b',\mu')_{K'}$.

Now we suppose that the group $K$ is hyperspecial, i.e. that there exists a reductive model $\mathcal{G}$ of $G$ over $\mathbb{Z}_p$ such $K = \mathcal{G}(\mathbb{Z}_p)$. In this case one can show that necessarily $\breve{E} = \breve{\mathbb{Q}}_p$.
The affine Deligne-Lusztig set associated with the local Shimura datum $(G, b, \mu)$ is the set
$$ X_{\mu}^G(b) = \{ g \in G(\breve{\mathbb{Q}}_p) / \mathcal{G}(W) \, |\, g^{-1}b \sigma(g) \in \mathcal{G}(W) \mu(p)\mathcal{G}(W)\}.$$
The work of Zhu \cite{Zhu} and Bhatt-Scholze \cite{BS} shows that $X_{\mu}^G(b)$ can be interpreted as the set of $\bar{\mathbb{F}}_p$-points of a perfect scheme $X_{\mu}^G(b)$ over $\bar{\mathbb{F}}_p$, called an affine Deligne-Lusztig variety, equipped with an action of $J_b(\mathbb{Q}_p)$.
Similarly as above, a morphism of local Shimura data from $(G,b, \mu)$ to $(G', b', \mu')$ induces a morphism $X_{\mu}^G(b) \to X_{\mu'}^{G'}(b')$.
There is a $J_b(\mathbb{Q}_p)$-equivariant continuous specialization map of topological spaces $$sp: \left| \mathcal{M}(G,b,\mu)_K\right| \to \left| X_{\mu}^G(b)\right|.$$

In the case of a local Shimura datum $(G,b,\mu)$ of abelian type and a hyperspecial subgroup $K \subset G(\mathbb{Q}_p)$, Shen shows that there exists a (generalized) Rapoport-Zink space $\breve{\mathcal{M}}$ over $W$ that such that the rigid generic fiber of $\breve{\mathcal{M}}$ is $\mathcal{M}(G, b, \mu)_K$ and the perfection of the special fiber is $X_{\mu}^G(b)$.
\begin{thm}[\cite{Xu}, Theorem 1.1]
Suppose that the local Shimura datum $(G,b,\mu)$ is of abelian type and the subgroup $K \subset G(\mathbb{Q}_p)$ is hyperspecial.
There exists a unique formal scheme $\breve{\mathcal{M}}= \breve{\mathcal{M}}(G,b,\mu)$ which is formally smooth and locally formally of finite type over $\mathrm{Spf \,} W$ such that
\begin{enumerate}[(i)]
\item
$\breve{\mathcal{M}}_{\eta}^{rig} = \mathcal{M}(G,b,\mu)_K$;
\item
$\breve{\mathcal{M}}_{\bar{\mathbb{F}}_p}^{perf} = X^G_{\mu}(b)$;
\end{enumerate}
and the map $sp: \left| \mathcal{M}(G,b,\mu)_K\right| \to \left| X_{\mu}^G(b)\right| $ can be identified with the specialization map for the formal scheme $\breve{\mathcal{M}}$. \end{thm}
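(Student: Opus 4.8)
The plan is to reduce the theorem to the Hodge type case, where a concrete moduli interpretation in terms of $p$-divisible groups with crystalline tensors is available, and then to descend along a central isogeny, in parallel with Deligne's and Kisin's construction of abelian type Shimura varieties. In the situation relevant to this paper one takes for $(G,b,\mu)$ the orthogonal datum attached to $G = SO(\Lambda_{d,\mathbb{Q}_p})$ and for the auxiliary Hodge type datum the spinor datum $(G_1,b_1,\mu_1)$ attached to $G_1 = GSpin(\Lambda_{d,\mathbb{Q}_p})$, with $\mathcal{G}_1/\mathbb{Z}_p$ the reductive model coming from $CL(\Lambda_{d,\hat{\mathbb{Z}}})$; the central isogeny $G_1 \to G$ induces an isomorphism of adjoint groups compatibly with $b$ and $\mu$.

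In the Hodge type case I would fix the symplectic embedding $(G_1,b_1,\mu_1) \hookrightarrow (GSp(CL(\Lambda_{d,\mathbb{Q}_p})),b',\mu')$ recalled in Section~\ref{Shsection}, together with a finite collection of tensors $(s_\alpha)$ cutting out $\mathcal{G}_1$ inside $GL$. Let $\breve{\mathcal{M}}_{GSp}$ be the classical Rapoport-Zink formal scheme of \cite{RZ} parametrizing, over $\mathrm{Spf}\, W$, polarized $p$-divisible groups equipped with a quasi-isogeny to a fixed one $\mathbb{X}_{b'}$; it is formally smooth and locally formally of finite type. One then defines $\breve{\mathcal{M}}(G_1,b_1,\mu_1)$ as the subfunctor where the universal quasi-isogeny carries the crystalline realizations of the $s_\alpha$ on $\mathbb{X}_{b'}$ to the crystalline tensors on the universal $p$-divisible group and where the Hodge filtration is induced by a cocharacter in the conjugacy class of $\mu_1$. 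That this is a closed formal subscheme, again formally smooth and locally formally of finite type over $\mathrm{Spf}\, W$, is the local analogue of Kisin's construction of integral canonical models \cite{Intmod}: by Grothendieck-Messing theory the relevant deformation functor is a torsor under a free module built from $\mathrm{Fil}^1\mathrm{Lie}(\mathcal{G}_1)$, hence smooth; here one uses the $\varphi$-equivariance of the tensors, as in \cite{Wansu}. To identify the fibers: on the generic fiber the Scholze-Weinstein equivalence \cite{Berkeley} between $p$-divisible groups and shtukas identifies $(\breve{\mathcal{M}}_{GSp})_\eta^{rig}$ with the moduli of modifications of $GSp$-bundles on the Fargues-Fontaine curve, and imposing the $s_\alpha$ reduces the structure group, giving $\mathcal{M}(G_1,b_1,\mu_1)_{\mathcal{G}_1(\mathbb{Z}_p)}$; on the special fiber the work of Zhu \cite{Zhu} and Bhatt-Scholze \cite{BS} identifies the perfection of the special fiber of $\breve{\mathcal{M}}_{GSp}$ with $X^{GSp}_{\mu'}(b')$, and Kisin's description of the $\bar{\mathbb{F}}_p$-points of the Shimura variety in terms of $\mathcal{G}_1$-stable lattices in the isocrystal shows that the tensor condition cuts out exactly $X^{G_1}_{\mu_1}(b_1)$. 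Compatibility of the specialization maps is built into the Scholze-Weinstein comparison.

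The abelian type descent is where the main difficulty lies. Following Deligne's recipe, one passes to the connected Rapoport-Zink space $\breve{\mathcal{M}}(G_1,b_1,\mu_1)^+$, the union of connected components over a fixed one, which carries an action of a group extending $G_1(\mathbb{Q}_p)$ by a $\mathcal{G}_1^{ad}(\mathbb{Z}_p)$-type factor, and defines $\breve{\mathcal{M}}(G,b,\mu)$ by the same push-out construction $[\,\breve{\mathcal{M}}(G_1,b_1,\mu_1)^+ \times \mathcal{A}(G)\,]/\mathcal{A}(G_1)^\circ$ used in the global setting. One then has to verify: (a) the result is again a formal scheme, formally smooth and locally formally of finite type over $\mathrm{Spf}\, W$ — this follows once the group acting on connected components is seen to act sufficiently freely; (b) its rigid generic fiber is $\mathcal{M}(G,b,\mu)_K$; and (c) the perfection of its special fiber is $X^G_\mu(b)$. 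Steps (b) and (c) are the crux: one needs that the moduli space of $G$-shtukas and the affine Deligne-Lusztig variety for $G$ both admit the same description as the corresponding $G_1$-objects, twisted by $\pi_1(G)$-data and Frobenius, so that the descent commutes with passing to generic and special fibers. This demands precise control of $\pi_0$ of Rapoport-Zink spaces and of affine Deligne-Lusztig varieties, together with their Galois and Frobenius actions (in the spirit of the connected-component results of Chen-Kisin-Viehmann), and it is this simultaneous combinatorial matching across all three incarnations — formal model, rigid generic fiber, perfect special fiber — that I expect to be the real obstacle.

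Finally, for uniqueness I would prove an extension property for $\breve{\mathcal{M}}(G,b,\mu)$ analogous to Milne's characterization of integral canonical models: any morphism to the generic fiber from a regular, formally smooth $W$-formal scheme extends over the formal scheme. Once existence together with the stated identifications of generic fiber, special fiber and specialization map is in hand, this extension property pins down $\breve{\mathcal{M}}(G,b,\mu)$ uniquely by the standard argument.
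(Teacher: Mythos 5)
This statement is quoted from the literature: the paper offers no proof of it and simply cites \cite{Xu}, Theorem 1.1 (with \cite{Wansu} supplying the Hodge type input). There is therefore no internal argument to compare yours against; the relevant comparison is with the cited proof, and at the level of strategy your outline does track it. One first constructs Rapoport-Zink formal schemes of Hodge type as closed formal subschemes of the classical symplectic ones cut out by crystalline tensors, identifies the rigid generic fiber with the Scholze-Weinstein moduli of shtukas and the perfection of the special fiber with the affine Deligne-Lusztig variety, and then descends to abelian type by a Deligne-style connected-components push-out, exactly parallel to Kisin's construction of integral canonical models.

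As a proof, however, your proposal has a genuine gap, and you have located it yourself: essentially the entire content of the theorem beyond the Hodge type case is the verification of your steps (a), (b), (c) for the push-out $[\,\breve{\mathcal{M}}(G_1,b_1,\mu_1)^+ \times \mathcal{A}(G)\,]/\mathcal{A}(G_1)^{\circ}$, that is, the simultaneous control of $\pi_0$ of the Hodge type Rapoport-Zink space, of the moduli of shtukas, and of the affine Deligne-Lusztig variety, compatibly with the $\mathcal{A}(G_1)$- and Frobenius actions, so that the quotient commutes with passage to generic and special fibers. Declaring this ``the real obstacle'' is a correct diagnosis but not an argument; it is precisely where the connected-component results (Chen-Kisin-Viehmann and their analogues for shtukas) must actually be invoked and matched across the three incarnations. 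A smaller point: for uniqueness you propose a Milne-style extension property, but the standard and simpler route is that a normal formal scheme, flat and locally formally of finite type over $\mathrm{Spf}\, W$, is already determined by its rigid generic fiber together with the specialization map; since $\breve{\mathcal{M}}$ is formally smooth, hence normal, condition (i) and the identification of $sp$ pin it down with no new extension argument needed.
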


Here the perfection of a scheme $Y$ over $\mathbb{F}_p$ is defined as the limit $Y^{perf}:= \varprojlim_{\mathrm{Frob}} Y$ over the Frobenius map.

We now turn to the situation of the orthogonal type Shimura variety considered in Section \ref{Shsection}.
Let $G = SO(\Lambda_{d, \mathbb{Q}_p})$ and
\begin{eqnarray*}\mu: \mathbb{G}_{m, \mathbb{Q}_p} & \to & G, \\
t & \mapsto & \left(\begin{array}{rrrrr}t & & & & \\ & 1 & & & \\ & & \ddots & & \\ & & & 1 & \\ & & & & t^{-1} \end{array} \right) . \end{eqnarray*}
Then the flag variety attached to $(G, \mu)$ is $$\mathcal{F} = \mathcal{F}(G,\mu) = \{x \in \mathbb{P}(\Lambda_d \otimes \mathbb{Q}_p) | \langle x,x \rangle=0 \} \subset \mathbb{P}(\Lambda_d \otimes \mathbb{Q}_p).$$
Note that $\mathcal{F} \subset \mathbb{P}(\Lambda_{d,\mathbb{Q}})$ can in fact be defined over $\mathbb{Q}$.

If $b \in B(G,\mu)$ is the unique basic element, then a result of Chen-Fargues-Shen \cite{CFS} asserts that the image of $\mathcal{M}(G,b,\mu)_K$ in $\mathcal{F}(G,\mu)$ is given by the rigid open subspace $\mathcal{F}(G,b,\mu)^{wa}$ with $\mathbb{C}_p$-points
$$ \mathcal{F}(G,b,\mu)^{wa}(\mathbb{C}_p) = \mathcal{F}(\mathbb{C}_p) \setminus \mathcal{F}(\mathbb{Q}_p),$$
the \emph{weakly admissible locus}.
Note the similarity to the complex period domain $\mathcal{D} \subset \mathcal{F}(\mathbb{C})$, which can be identified with a union of connected components of the analytic open subspace $\mathcal{F}(\mathbb{C}) \setminus \mathcal{F}(\mathbb{R})$.

\subsection{Rapoport-Zink uniformization}

We discuss the link between the local Shimura varieties in the previous section and global Shimura varieties.
The guiding principle is that the completion of the integral canonical model of the Shimura variety along the basic locus will be uniformized by a local Shimura variety. This can be viewed as a $p$-adic analog of complex uniformization.

Let $Sh_K:= Sh(G, \mathcal{D})_K$ be a Shimura variety of abelian type with $K_p$ hyperspecial, and let $\mathscr{S}_K$ be the integral canonical model of $Sh_K$ given by Theorem \ref{integralmodel}. Denote by $\overline{\mathscr{S}}_K= \mathscr{S}_K \otimes \bar{\mathbb{F}}_p$ the special fiber of $\mathscr{S}_K$. Recall (\cite{Xu}, Theorem 6.2) that there exists a Newton stratification
$$ \overline{\mathscr{S}}_K = \coprod_{b \in B(G,\mu)} \overline{\mathscr{S}}_K^b $$
such that the Zariski closure of $\overline{\mathscr{S}}_K^b $ in $\overline{\mathscr{S}}_K$ is $\coprod_{b' \le b} \overline{\mathscr{S}}_K^{b'}$.
In particular, if $b \in B(G, \mu)$ is the basic element, then $\overline{\mathscr{S}}_K^b$ is closed.
We remind the reader that to an $\bar{\mathbb{F}}_p$-point $x \in \overline{\mathscr{S}}_K(\bar{\mathbb{F}}_p)$ one can associate a $G$-isocrystal $\mathcal{I}_x$, and then the point $x$ belongs to $\overline{\mathscr{S}}_K^b$ if and only if the isomorphism class of $\mathcal{I}_x$ corresponds to $b \in B(G,\mu)$.
The Langlands-Rapoport conjecture, for Shimura varieties of abelian type now a theorem of Kisin (\cite{Kisin}, Theorem 4.6.7), asserts that we can decompose the set of $\bar{\mathbb{F}}_p$-points of $\overline{\mathscr{S}}_K^b$ further into a disjoint union $$ \overline{\mathscr{S}}_K^b(\bar{\mathbb{F}}_p) = \coprod_{[\phi], b(\phi)=b} \overline{\mathscr{S}}(\phi).$$
Here $\phi: \mathfrak{Q} \to G$ runs through the set of admissible representations of the Langlands-Rapoport gerbe (cf. \cite{Kisin}, 3.3), $[\phi]$ is the associated equivalence class, and $b(\phi)$ the element of $B(G,\mu)$ attached to $\phi$.
If $\mathscr{S}_K$ is of Hodge type, then the set $\overline{\mathscr{S}}_K(\phi)$ corresponds to the isogeny class of an abelian variety over $\bar{\mathbb{F}}_p$ with certain $\ell$-adic and crystalline tensors defined by the group $G$.

In general, for the basic element $b \in B(G,\mu)$ there is a unique representation $\phi$ (up to equivalence) such that $b(\phi)=b$, and thus $\overline{\mathscr{S}}_K^b = \overline{\mathscr{S}}(\phi)$.
We can now begin to formulate the Rapoport-Zink uniformization for the basic locus. Let $\widehat{ {\mathscr{S}_{K,W}}_{/\overline{\mathscr{S}}_K^b}}$ denote the formal scheme over $\mathrm{Spf \,} W $ that is the completion of $\mathscr{S}_{K}$ over $W$ along the closed subscheme $\overline{\mathscr{S}}_K^b$. Let $Sh_K^b\subset Sh_{\breve{\mathbb{Q}}_p}^{rig}$ be the associated rigid generic fiber, which is the tube over the closed subscheme $\overline{\mathscr{S}}_K^b$.
The global Shimura datum $(G, \mathcal{D})$ and the element $b \in B(G,\mu)$ give a local Shimura datum $(G_{\mathbb{Q}_p}, b, \mu)$, and we denote by $\mathcal{M} := \mathcal{M}(G,b,\mu)_{K_p}$ the associated local Shimura variety. Similarly, we denote by $X:= X_{\mu}^G(b)$ the corresponding affine Deligne-Lusztig variety.

\begin{thm}[\cite{Xu}, Corollary 6.14 ] \label{RZuniformization}
Assume $K_p$ hyperspecial and $b \in B(G, \mu)$ basic.
We have isomorphisms
$$ Sh_K^b \overset{\sim}{\rightarrow} I_{\phi}(\mathbb{Q}) \backslash \mathcal{M} \times G(\mathbb{A}_f^p) /K^p,$$
and $$ \overline{\mathscr{S}}_K^{b,perf} \overset{\sim}{\rightarrow} I_{\phi}(\mathbb{Q})\backslash X \times G(\mathbb{A}_f^p)/K^p.$$
\end{thm}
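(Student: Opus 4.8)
The plan is to prove the uniformization first in the Hodge type case, where it rests on the interpretation of points of $\mathscr{S}_K$ as abelian varieties with tensors, and then to deduce the general abelian type case by Deligne's construction of abelian type Shimura varieties. Assume therefore that $(G,\mathcal{D})$ is of Hodge type, so that (after choosing a suitable auxiliary level) $\mathscr{S}_K$ embeds into a Siegel integral model and carries a universal abelian scheme $\mathcal{A}$ together with families of $\ell$-adic and crystalline tensors $(s_\alpha)$ cut out by idempotents as in Section \ref{Subsectionintcanmodels}; by Kisin's construction these tensors define, at every $x \in \overline{\mathscr{S}}_K(\bar{\mathbb{F}}_p)$, a reduction to $G$ of $\prod_{\ell\neq p} H^1_{\acute{e}t}(\mathcal{A}_x,\mathbb{Z}_\ell)$ and of the $F$-crystal $H^1_{cris}(\mathcal{A}_x/W)$, giving the $G$-isocrystal $\mathcal{I}_x$. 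Fix a base point $x_0 \in \overline{\mathscr{S}}_K^b(\bar{\mathbb{F}}_p)$; since $b$ is basic its Newton stratum is closed, and by definition $\mathcal{I}_{x_0} \cong b$. We let $I_\phi$ be the automorphism group of the corresponding admissible pair in the Langlands--Rapoport gerbe; for basic $b$ it is an inner form of $G$ with $I_\phi(\mathbb{Q}_p) = J_b(\mathbb{Q}_p)$ and $I_\phi(\mathbb{Q}_\ell) \subset G(\mathbb{Q}_\ell)$ for $\ell\neq p$, so $I_\phi(\mathbb{Q})$ embeds diagonally into $J_b(\mathbb{Q}_p)\times G(\mathbb{A}_f^p)$.

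For the isomorphism of special fibers, the crucial input is the Langlands--Rapoport conjecture for $\mathscr{S}_K$, a theorem of Kisin \cite{Kisin}, in its refined form describing isogeny classes. Since $b$ is basic there is, up to equivalence, a unique admissible $\phi$ with $b(\phi)=b$, so that $\overline{\mathscr{S}}_K^b(\bar{\mathbb{F}}_p) = \overline{\mathscr{S}}(\phi)$ is a single isogeny class: a point is an abelian variety isogenous to $\mathcal{A}_{x_0}$ compatibly with all the $s_\alpha$, together with a $K^p$-level structure. Such data are parametrized by the position of the Dieudonn\'e lattice of the $p$-divisible part --- which, once one imposes compatibility with the crystalline tensors and boundedness by $\mu$, is exactly a point of the affine Deligne--Lusztig set $X = X_\mu^G(b)$ --- together with a coset in $G(\mathbb{A}_f^p)/K^p$ coming from the prime-to-$p$ Tate module, all modulo the self-quasi-isogenies $I_\phi(\mathbb{Q})$ of $(\mathcal{A}_{x_0},(s_\alpha))$. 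This gives the bijection $\overline{\mathscr{S}}_K^b(\bar{\mathbb{F}}_p) \cong I_\phi(\mathbb{Q})\backslash X(\bar{\mathbb{F}}_p)\times G(\mathbb{A}_f^p)/K^p$, and it promotes to an isomorphism of perfect schemes because $X_\mu^G(b)$ is representable by a perfect scheme (\cite{Zhu}, \cite{BS}) and the perfection of $\mathscr{S}_K$ is built étale-locally from such affine Deligne--Lusztig varieties.

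For the isomorphism of rigid generic fibers one passes to formal completions along the basic locus. By Serre--Tate theory the formal completion of $\mathscr{S}_{K,W}$ at $x_0$ is the formal deformation space of the $p$-divisible group of $\mathcal{A}_{x_0}$ with its crystalline tensors and $\mu$-structure; by the representability of Hodge type Rapoport--Zink spaces \cite{Wansu} (equivalently, by the shtuka moduli description together with the comparison $\breve{\mathcal{M}}_{\eta}^{rig} = \mathcal{M}(G,b,\mu)_{K_p}$ recalled above), this deformation space, spread over the whole basic locus and over the $K^p$-structures, is precisely $I_\phi(\mathbb{Q})\backslash \breve{\mathcal{M}}(G,b,\mu)\times G(\mathbb{A}_f^p)/K^p$ as a formal scheme over $W$. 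Passing to rigid generic fibers, and recalling that $Sh_K^b$ is the tube over $\overline{\mathscr{S}}_K^b$, yields $Sh_K^b \cong I_\phi(\mathbb{Q})\backslash \mathcal{M}\times G(\mathbb{A}_f^p)/K^p$ with $I_\phi(\mathbb{Q})$ acting through $J_b(\mathbb{Q}_p)\times G(\mathbb{A}_f^p)$; compatibility with the specialization map $sp$ follows from the corresponding compatibility on formal schemes.

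To treat a general abelian type datum $(G,\mathcal{D})$ one reduces to the Hodge type case by Deligne's presentation: there is a Hodge type datum $(G_1,\mathcal{D}_1)$ with $G_1^{ad} = G^{ad}$, and both $Sh_K(G,\mathcal{D})$ and its integral canonical model are obtained from the corresponding Hodge type objects by the standard quotient construction using the action of $G^{ad}(\mathbb{Q})^+$. Since the Newton stratification, the element $b$, the local Shimura variety $\mathcal{M}(G,b,\mu)_{K_p}$, the affine Deligne--Lusztig variety $X_\mu^G(b)$ and the group $I_\phi$ are all functorial for the relevant isogenies of reductive groups, the Hodge type uniformization transports to $\mathscr{S}_K$. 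The main obstacle throughout is the matching of the crystalline tensors on the Rapoport--Zink side with the automorphic/motivic tensors on the Shimura side: one must know that Kisin's tensors $(s_\alpha)$, characterized a priori only in characteristic zero, remain horizontal and Frobenius-invariant along the basic locus, so that the Rapoport--Zink deformation space \emph{with} tensors genuinely computes the formal completion of $\mathscr{S}_K$. This is precisely the hard input furnished by the theory of integral canonical models, and in the orthogonal/GSpin situation relevant to the present paper it can be made explicit via the crystalline comparison results of Madapusi-Pera \cite{MP}, \cite{MP2}.
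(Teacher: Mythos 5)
This theorem is not proved in the paper at all: it is quoted verbatim from Shen (\cite{Xu}, Corollary 6.14) and used as a black box, so there is no internal proof to compare your argument against. What you have written is a reasonable high-level reconstruction of the actual proof strategy in the literature, and it matches the architecture of Kim's and Shen's arguments: (a) in the Hodge type case, use the fact that the basic Newton stratum is a single isogeny class (a consequence of Kisin's work on the Langlands--Rapoport conjecture together with the uniqueness of the admissible $\phi$ with $b(\phi)=b$) to parametrize $\overline{\mathscr{S}}_K^b(\bar{\mathbb{F}}_p)$ by Dieudonn\'e lattices with tensors bounded by $\mu$ (i.e.\ points of $X_{\mu}^G(b)$) and prime-to-$p$ level structures, modulo $I_{\phi}(\mathbb{Q})$; (b) upgrade to formal schemes via Serre--Tate theory and the representability of Hodge type Rapoport--Zink spaces, then pass to rigid generic fibers; (c) descend to general abelian type data via Deligne's $G^{ad}(\mathbb{Q})^+$-quotient construction. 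Be aware, though, that each of these steps is itself a substantial theorem rather than a routine verification: the single-isogeny-class statement, the existence and tensor-compatibility of Hodge type Rapoport--Zink spaces (Kim, Hamacher--Kim), the comparison of these formal schemes with the shtuka-theoretic local Shimura varieties of \cite{Berkeley}, and the connected-component bookkeeping in the abelian type reduction are precisely the hard inputs, and your sketch invokes them rather than proves them. That is entirely appropriate for the role this theorem plays in the paper --- which likewise imports it --- but your text should be read as an outline of \cite{Xu} and \cite{Wansu}, not as a self-contained proof.
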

Here $I_{\phi}$ denotes the group of automorphisms of the representation $\phi: \mathfrak{Q} \to G$, i.e. the algebraic group over $\mathbb{Q}$ with points in a $\mathbb{Q}$-algebra $R$ given by
$$I_{\phi}(R) = \{g \in G(R \otimes_{\mathbb{Q}} \bar{\mathbb{Q}}) \, | \, \textnormal{conj}_g \circ \phi = \phi \}.$$

The right hand side of the first isomorphism in Theorem \ref{RZuniformization} can be written as a finite disjoint union $$ I_{\phi}(\mathbb{Q}) \backslash \mathcal{M} \times G(\mathbb{A}_f^p) /K^p = \coprod_g \Gamma_g \backslash \mathcal{M},$$
where $\Gamma_g = I_{\phi}(\mathbb{Q}) \cap gK^p g^{-1}$ and $g$ runs through a set of representatives of the double coset
$ I_{\phi}(\mathbb{Q}) \backslash G(\mathbb{A}_f^p) /K^p $.
\begin{rem}
For Shimura varieties of PEL type this uniformization was established in the book of Rapoport and Zink (cf. \cite{RZ}, Theorem 6.36), and then generalized to Shimura varieties of Hodge type by Kim \cite{Wansu} and Shen \cite{Xu}.
One can formulate similar uniformization results for suitably defined completions along the other strata $\overline{\mathscr{S}}(\phi)$, and even on the level of formal schemes, see (\cite{Xu}, Theorem 6.7).
\end{rem}

We will be interested in the case where $(G, \mathcal{D})$ is the orthogonal type Shimura datum, and $(\tilde{G}, \mathcal{D})$ is the $GSpin$ cover.
In this case, for $p \nmid 2d$ and $p \nmid n$, by construction of the uniformization in \cite{Xu} we have a commuting diagram

$$\xymatrix{ \widetilde{Sh}_{n}^{\tilde{b}} \ar[r]^-{\cong} \ar[d]& I_{\tilde{\phi}}(\mathbb{Q}) \backslash \tilde{\mathcal{M}} \times \tilde{G}(\mathbb{A}_f^p)/\tilde{K}_n^p \ar[d] \\
Sh_n^{b} \ar[r]^-{\cong} & I_{\phi}(\mathbb{Q}) \backslash \mathcal{M} \times G(\mathbb{A}_f^p)/K_n^p,
}$$
and similarly for the perfection of $ \overline{\mathscr{S}}_n^{b}$.

\section{The $p$-adic period map}

We use Rapoport-Zink uniformization to construct a $p$-adic period map for the moduli space of polarized K3 surfaces with supersingular reduction. For the rest of this paper, we assume $p \nmid 2d$, that $n = q^{r}$ is the power of a prime number $q \not=p$ and $n$ is sufficiently large, see Remark \ref{remarkorientation}.

\subsection{The Kuga-Satake construction in positive characteristic}

As observed by Madapusi-Pera \cite{MP} (see also \cite{RizovCrelle}), the period map $$\varphi: M_{2d,n, \mathbb{Z}_{(p)}} \to \mathscr{S}_{n}$$ over $\mathbb{Z}_{(p)}$
provides a possibility to generalize the classical Kuga-Satake construction to positive characteristic.
Namely, given an algebraically closed field $F$ of characteristic $p \ge 3$ and a point $x \in M_{2d,n, \mathbb{Z}_{(p)}}(F)$ corresponding to a polarized K3 surface $(X, \xi)$ of degree $2d$ over $F$ with a level structure $[\eta]$, we can choose a lift $\widetilde{\varphi(x)} \in \widetilde{\mathscr{S}}_{n}(F)$ of the image $\varphi(x)$.
The moduli interpretation of $\widetilde{\mathscr{S}}_{n}$ provides us with an abelian variety $A$ over $F$ (with an action of the Clifford algebra $CL(\Lambda_d)$) that we will call a Kuga-Satake abelian variety attached to $(X, \xi)$. As described in Section \ref{Subsectionintcanmodels}, it comes with distinguished subspaces $L_{\ell, A} \subset \mathrm{End}(H_{\acute{e}t}^1(A, \mathbb{Z}_{\ell}))$ and $L_{cris, A} \subset \mathrm{End}(H_{cris}^1(A/W(F)))$.
The following properties of the Kuga-Satake abelian variety were established and cru\-cially used in Madapusi-Pera's work on the Tate conjecture for K3 surfaces over finitely generated fields of characteristic $p \ge 3$. We recall them here for the reader's convenience.

\begin{thm}[\cite{MP}, Theorem 4.17]\label{Madapusi-Pera}
Let $(X,\xi)$ be a polarized K3 surface over $F$ of degree $2d$ and $A$ a Kuga-Satake abelian variety attached to $(X,\xi)$.
\begin{enumerate}[(i)]
\item \label{one} There exists an isomorphism of $\mathbb{Z}_{\ell}$-modules $$H_{\acute{e}t}^1(A, \mathbb{Z}_{\ell}) \cong CL(P^2_{\acute{e}t}(X, \mathbb{Z}_{\ell})),$$ as well as an isomorphism of $W(F)$-modules
$$ H_{cris}^1(A/W(F)) \cong CL(P^2_{cris}(X/W(F))).$$
\item \label{naturalsubspaces}
The natural action by left translation on the Clifford algebra in (\ref{one}) induces an embedding $$P^2_{\acute{e}t}(X, \mathbb{Z}_{\ell}) \subset \mathrm{End}(H_{\acute{e}t}^1(A, \mathbb{Z}_{\ell}))$$ which identifies $P^2_{\acute{e}t}(X, \mathbb{Z}_{\ell})$ with the distinguished subspace $$L_{\ell, A} \subset \mathrm{End}(H_{\acute{e}t}^1(A, \mathbb{Z}_{\ell})).$$
Similarly, the $F$-equivariant embedding $$P^2_{cris}(X/W(F))(1) \subset \mathrm{End}(H_{cris}^1(A/W(F)))$$ identifies
$ P^2_{cris}(X/W(F))(1) $ with the distinguished subspace $$L_{cris,A} \subset \mathrm{End}(H_{cris}^1(A/W(F))).$$
\item \label{NSKS}
Let $L(A) \subset \mathrm{End}(A)$ be the subspace of \emph{special endomorphisms}, i.e. endomorphisms whose cohomological realization lies in $P^2_{\acute{e}t}(X, \mathbb{Z}_{\ell})$ under the embedding in (\ref{naturalsubspaces}) for all $\ell \not=p$, as well as in $P^2_{cris}(X/W(F))(1)$.
Then we have a natural identification
$$ NS(X) \supset \langle\xi\rangle^{\perp} \cong L(A).$$
\end{enumerate}
\end{thm}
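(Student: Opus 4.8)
The plan is to deduce everything from the complex-analytic Kuga--Satake construction by transporting it along the period map, and then to pass to characteristic $p$ by smooth specialization (for the $\ell$-adic statements) and by the crystalline--de Rham comparison (for the crystalline statements); this is the strategy of Madapusi-Pera, and we only indicate the main steps.

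First, over $\mathbb{C}$ parts (i) and (ii) are the classical Kuga--Satake construction recalled around the diagram (\ref{KS}): the lift $\tilde\rho_X \colon \mathbb{S}\to GSpin(P^2_B(X,\mathbb{R}))$ realizes $H^1_B(A,\mathbb{Q})$ as $CL(P^2_B(X,\mathbb{Q}))$ with its weight-one Hodge structure, and left multiplication identifies $P^2_B(X,\mathbb{Q})(1)$ with exactly the sub-Hodge structure cut out by the idempotent $\pi$, i.e.\ with $L_B = ad_{\mathbb{C}}^{*}\mathbb{V}$. Since the period map is built so that $\mathcal{P}_\ell = \Phi_{\mathbb{Q}}^{*}\mathbb{V}_\ell$ and $ad^{*}\mathbb{V}_\ell = L_\ell = \mathrm{im}(\pi_\ell) \subset \tilde{\mathbb{V}}_\ell^{\otimes(1,1)} = \mathrm{End}(R^1 f_{\acute{e}t,*}\underline{\mathbb{Z}_\ell})$, and the Betti--\'etale comparison is functorial, this gives (i) and (ii) for the $\ell$-adic realizations at every $\mathbb{C}$-point. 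Because $M_{2d,n,\mathbb{Z}_{(p)}}$ is smooth over $\mathbb{Z}_{(p)}$, the $\ell$-adic local systems are lisse, and $\varphi\colon M_{2d,n,\mathbb{Z}_{(p)}}\hookrightarrow\mathscr{S}_n$ (Proposition \ref{integralext}) is an open immersion compatible with them, these identifications specialize to every point over an algebraically closed field $F$ of characteristic $p$ and all $\ell\neq p$.

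The crystalline statements require more work. One uses that on $\widetilde{\mathscr{S}}_{n,\bar{\mathbb{F}}_p}$ the $F$-crystal $\tilde{\mathbb{V}}_{cris}=R^1\overline{f}_{*,cris}\mathcal{O}_{\overline{\mathcal{A}}/W}$ carries the $F$-invariant idempotent $\pi_{cris}$ with image $L_{cris}$, while on the K3 side the primitive crystal $\mathcal{P}^2_{cris}(1)$ is already defined over $M_{2d,n,\mathbb{Z}_{(p)}}$; one then shows $\varphi^{*}L_{cris}\cong\mathcal{P}^2_{cris}(1)$ as $F$-crystals with their extra structure. Over $M_{2d,n,\mathbb{Z}_{(p)}}$, smoothness lets one recover the crystal with its Frobenius from the associated filtered module with connection via the crystalline--de Rham comparison; over the generic fibre the de Rham realizations agree because $\mathcal{P}_{dR}=\varphi^{*}\mathcal{V}_{dR}$ by construction and $\mathcal{V}_{dR}$ is cut out of the de Rham cohomology of $\mathcal{A}$ by $\pi$, matching the complex picture through the de Rham comparison theorems; this identification then extends over $\mathbb{Z}_{(p)}$ and specializes to characteristic $p$. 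Evaluating at $x\in M_{2d,n,\mathbb{Z}_{(p)}}(F)$ together with its Kuga--Satake lift $\widetilde{\varphi(x)}$ yields (i) and (ii) for crystalline cohomology.

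For (iii), part (ii) shows that each divisor class $D\in\langle\xi\rangle^{\perp}\subset NS(X)$ has cohomology classes lying in $L_{\ell,A}$ for all $\ell\neq p$ and simultaneously in $L_{cris,A}$, so $D$ defines a special endomorphism and we obtain a map $\langle\xi\rangle^{\perp}\to L(A)$; injectivity is immediate from faithfulness of the cohomological realization on $\mathrm{End}(A)_{\mathbb{Q}}$ together with torsion-freeness of $NS(X)$ (Proposition \ref{NSintegral}). Surjectivity is the crux: given a special endomorphism $f$ of $A$, one shows using Serre--Tate theory and Kisin's results on deformations of Tate classes that the locus in the deformation space of $(X,A)$ on which $f$ stays special is cut out by a single equation, hence $f$ lifts together with $(X,A)$ to a polarized K3 surface over a characteristic-zero DVR; over $\mathbb{C}$ the class of $f$ in $P^2_B(1)$ of the generic fibre is then a Hodge class, so algebraic by the Lefschetz $(1,1)$-theorem, and specializing the resulting divisor produces a class in $\langle\xi\rangle^{\perp}$ mapping to $f$. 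The main obstacle is precisely this lifting step in (iii), and, within (i)--(ii), controlling the full $F$-crystal structure in the crystalline case rather than merely the underlying module; the $\ell$-adic parts are essentially formal once the complex Kuga--Satake construction and smooth specialization are in place.
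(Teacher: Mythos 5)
Note first that the paper does not prove this statement: it is imported verbatim from Madapusi-Pera (\cite{MP}, Theorem 4.17) and used as a black box, so your sketch can only be measured against the proof in that reference. At that level your architecture is broadly right for (i)--(ii) (complex Kuga--Satake construction, transport along the period map, spreading out over the integral model) and for the surjectivity half of (iii) (deform the special endomorphism to characteristic zero, apply Lefschetz $(1,1)$, specialize the divisor). But there is a genuine gap in your construction of the forward map in (iii). You claim that a divisor class $D\in\langle\xi\rangle^{\perp}$ ``defines a special endomorphism'' simply because its cohomological realizations land in $L_{\ell,A}$ and $L_{cris,A}$. The target $L(A)$ consists of actual elements of $\mathrm{End}(A)$, and over an algebraically closed field of characteristic $p$ a compatible system of classes in $\mathrm{End}(H^1_{\acute{e}t}(A,\mathbb{Z}_{\ell}))$ and $\mathrm{End}(H^1_{cris}(A/W(F)))$ is not automatically induced by an endomorphism of $A$ --- that would be a Tate-type statement for endomorphisms, unavailable in this generality. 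In \cite{MP} this direction is also nonformal: one lifts the polarized K3 surface together with the line bundle to characteristic zero (Deligne--Ogus), produces the endomorphism there using $\mathrm{End}(A)=\mathrm{End}_{\mathrm{HS}}(H^1_B(A,\mathbb{Z}))$, and then specializes the \emph{endomorphism}. So both directions of (iii), not just surjectivity, rest on the deformation-theoretic input.

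A second, smaller gap sits in the crystalline half of (i)--(ii). You propose to ``recover the crystal with its Frobenius from the associated filtered module with connection via the crystalline--de Rham comparison.'' Over a smooth base a crystal is indeed equivalent to a module with topologically quasi-nilpotent integrable connection, but the Frobenius structure is extra data not determined by the connection or the Hodge filtration; Mazur-type results constrain Frobenius by the filtration but do not reconstruct it. In \cite{MP} and \cite{MP2} the tensor $\pi_{cris}$ and the Frobenius-equivariant identification $L_{cris,A}\cong P^2_{cris}(X/W(F))(1)$ come from Kisin's integral $p$-adic Hodge theory (Breuil--Kisin modules attached to the $p$-adic \'etale realizations at points of the generic fibre), not from the de Rham comparison. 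You correctly flag the $F$-crystal structure as a main obstacle, but the mechanism you propose would not close it.
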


One can extend the Kuga-Satake construction to quasi-polarized K3 surfaces, and the analog of Theorem \ref{Madapusi-Pera} holds, see (\cite{Isogenies}, Proposition 3.11).

\begin{rem}
Note that for $f \in L(A)$ the composition $f \circ f \in \mathrm{End}(A)$ is in fact a scalar, equipping $L(A)$ with a quadratic form. The identification in (\ref{NSKS}) is even an isometry.
\end{rem}

We provide an alternative viewpoint on the Kuga-Satake construction in the case $F = \bar{\mathbb{F}}_p$ . As discussed above, using the Langlands-Rapoport conjecture for Shimura varieties of abelian type (\cite{Kisin}, Theorem 4.6.7) we can attach to the point $\varphi(x) \in \mathscr{S}_{n}(\bar{\mathbb{F}}_p)$ a representation $\phi: \mathfrak{Q} \to G$ of the Langlands-Rapoport gerbe $\mathfrak{Q}$.
Similarly, the lift $\widetilde{\varphi(x)} \in \widetilde{\mathscr{S}}_{n}(\bar{\mathbb{F}}_p) $ gives rise to a representation $\tilde{\phi}: \mathfrak{Q} \to \tilde{G}$, making the diagram
$$\xymatrix{\mathfrak{Q} \ar[r]^{\tilde{\phi}} \ar[rd]_{\phi} & \tilde{G} \ar[d] \\
& G} $$
commute.
The analogy with the diagram (\ref{KS}) justifies calling this lift a Kuga-Satake abelian variety.
Via the conjectural interpretation of the gerbe $\mathfrak{Q}$ as the fundamental gerbe of the category of motives over $\bar{\mathbb{F}}_p$ (cf. \cite{Milnepoints}, Theorem 3.25), the Kuga-Satake construction should correspond to lifting the $G$-motive corresponding to the K3 surface $(X, \xi)$ to a $\tilde{G}$-motive over $\bar{\mathbb{F}}_p$, which via the natural action of $\tilde{G}$ on $CL(\Lambda_{d, \mathbb{Q}})$ gives the motive of the Kuga-Satake abelian variety. The process of attaching a Kuga-Satake abelian variety to a K3 surface should thus be a motivic construction, which justifies the statement on algebraic cycles in Theorem \ref{Madapusi-Pera}(\ref{NSKS}).

\subsection{Isogenies of K3 surfaces}

Let $x \in \overline{M}_{2d,ss,n}(\bar{\mathbb{F}}_p)$ be a point and $(X, \xi)$ the associated supersingular K3 surface with level structure.
Denote by $\phi: \mathfrak{Q} \to G$ the representation of the Langlands-Rapoport gerbe corresponding to the point $\varphi(x) \in \mathscr{S}_{n}(\bar{\mathbb{F}}_p)$.
In this section we compute the automorphism group $I_{\phi}$ of $\phi$ appearing in the Rapoport-Zink uniformization in Theorem \ref{RZuniformization} concretely in terms of the polarized K3 surface $(X, \xi)$.

Recall that for a representation $\phi: \mathfrak{Q} \to G$, we denote by $I_{\phi}$ the algebraic group over $\mathbb{Q}$ with points in a $\mathbb{Q}$-algebra $R$ given by
$$I_{\phi}(R) = \{g \in G(R \otimes_{\mathbb{Q}} \bar{\mathbb{Q}}) \, | \, \textnormal{conj}_g \circ \phi = \phi \}.$$

Now let $\tilde{\phi}: \mathfrak{Q} \to \tilde{G} $ be the representation of the Langlands-Rapoport gerbe attached to a lift $\widetilde{\varphi(x)} \in \widetilde{\mathscr{S}}_{n}(\bar{\mathbb{F}}_p)$, and $A$ the associated Kuga-Satake abelian variety over $\bar{\mathbb{F}}_p$ with distinguished subspaces $L_{cris,A} \subset \mathrm{End}(H^1_{cris}(A/W))$ and $L_{\ell,A} \subset \mathrm{End}(H^1_{\acute{e}t}(A, \mathbb{Z}_{\ell}))$ for $\ell \not=p$.

Fix isometries $\Lambda_{d, \hat{\mathbb{Z}}^p} \cong P^2_{\acute{e}t}(X, \hat{\mathbb{Z}}^p)$ and $\Lambda_{d, \mathbb{Q}_p} \cong (P^2_{cris}(X/W)[\frac{1}{p}])^{\varphi=p}$, inducing iso\-morphisms $H^1_{\acute{e}t}(A, \hat{\mathbb{Z}}^p) \cong CL(\Lambda_{d, \hat{\mathbb{Z}}^p})$ and $H^1_{cris}(A/W)[\frac{1}{p}] \cong CL(\Lambda_{d,\breve{\mathbb{Q}}_p})$ by Theorem \ref{Madapusi-Pera}.
If we denote by $N_{d} = \langle \xi \rangle^{\perp} \subset NS(X)$ the primitive part of the Neron-Severi lattice of $X$, the Chern class induces
isometries \begin{equation}\label{isometries} N_{d, \hat{\mathbb{Z}}^p} \cong \Lambda_{d,\hat{\mathbb{Z}}^p}, \, \, \, \, N_{d, \mathbb{Q}_p} \cong \Lambda_{d,\mathbb{Q}_p}. \end{equation}
As the element $b(\phi) \in B(G, \mu)$ corresponding to $\phi$ is basic, we have $I_{\phi, \mathbb{A}_f^p} = G_{\mathbb{A}_f^p}$ and $I_{\phi,\mathbb{Q}_p} = J_b $.

\begin{prop}\label{isogenygroup}
There is an isomorphism of groups $$I_{\phi} \cong SO(N_{d,\mathbb{Q}})$$
compatible with the isomorphisms $G_{\mathbb{A}_f^p} \cong SO(N_{d, \mathbb{A}_f^p}) $ and $ J_b \cong SO(N_{d, \mathbb{Q}_p})$ provided by the isometries in (\ref{isometries}). \end{prop}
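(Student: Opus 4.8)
The plan is to identify $I_{\phi}$ with a group of automorphisms of the Kuga-Satake abelian variety $A$ respecting the crystalline and $\ell$-adic tensors, and then translate this via Theorem \ref{Madapusi-Pera} into an automorphism group of the Neron-Severi lattice. First I would recall the general description of $I_{\phi}$ for a Shimura variety of Hodge type, as in the Langlands-Rapoport picture used in \cite{Kisin} and \cite{Xu}: for $\tilde{\phi}:\mathfrak{Q}\to\tilde{G}$ attached to the point $\widetilde{\varphi(x)}$, the group $I_{\tilde{\phi}}$ is the algebraic group over $\mathbb{Q}$ whose $\mathbb{Q}$-points are quasi-isogenies of $A$ (with its polarization and the distinguished tensors $\pi_{\ell}$, $\pi_{cris}$) up to the ones coming from the center; equivalently, after passing to the adjoint quotient, $I_{\phi}$ is the group of automorphisms of the $G$-motive of $(X,\xi)$. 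Concretely, $I_{\phi}(\mathbb{Q})$ is the subgroup of $g\in G(\breve{\mathbb{Q}}_p)$ (placed diagonally) preserving simultaneously the $F$-isocrystal structure at $p$ and the lattices $\Lambda_{d,\mathbb{Z}_{\ell}}$ (up to the level) at all $\ell\neq p$, i.e. the $\mathbb{Q}$-form of $J_b$ that is also unramified away from $p$.

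\textbf{Key steps.}
Step 1: Use Theorem \ref{Madapusi-Pera}(\ref{naturalsubspaces}) to identify the distinguished subspace $L_{cris,A}\subset\mathrm{End}(H^1_{cris}(A/W))$ with $P^2_{cris}(X/W)(1)$ and $L_{\ell,A}$ with $P^2_{\acute{e}t}(X,\mathbb{Z}_{\ell})$, so that an automorphism of $A$ respecting the tensors acts on these subspaces as an isometry. Since $G=SO(\Lambda_d)$ acts on $CL(\Lambda_d)$ through the spinor representation, the automorphism group $I_{\phi}$ of $\phi$ is precisely the group of self-isometries of the primitive cohomology of $X$ that preserve the crystalline Frobenius at $p$ and the étale $\mathbb{Z}_{\ell}$-lattices away from $p$ — this is the content of the motivic interpretation sketched after Theorem \ref{Madapusi-Pera}. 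Step 2: Invoke the integral Tate conjecture for supersingular K3 surfaces in the form of Proposition \ref{NSintegral}, together with (\ref{generatescoho}) and (\ref{generatescoho2}): because $NS(X)$ already generates $H^2_{\acute{e}t}(X,\mathbb{Z}_{\ell})$ and the Frobenius-fixed part of crystalline cohomology, an isometry of $P^2_{cris}(X/W)[\frac{1}{p}]$ commuting with $\varphi$ is the same thing as an isometry of $N_{d,\mathbb{Q}_p}=\langle\xi\rangle^{\perp}\otimes\mathbb{Q}_p$, and an isometry of $P^2_{\acute{e}t}(X,\mathbb{Q}_{\ell})$ preserving the lattice is the same as an isometry of $N_{d,\mathbb{Z}_{\ell}}$. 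Step 3: Assemble these local identifications. Since $I_{\phi}$ is a $\mathbb{Q}$-group with $I_{\phi,\mathbb{Q}_p}=J_b\cong SO(N_{d,\mathbb{Q}_p})$ and $I_{\phi,\mathbb{A}_f^p}=G_{\mathbb{A}_f^p}\cong SO(N_{d,\mathbb{A}_f^p})$ via (\ref{isometries}), and since $SO(N_{d,\mathbb{Q}})$ is the unique $\mathbb{Q}$-form of $SO(\Lambda_{d,\mathbb{Q}})$ with these prescribed localizations — here one uses that $N_{d,\mathbb{Q}}$ and $\Lambda_{d,\mathbb{Q}}$ have the same localizations at every finite place (immediate from (\ref{isometries})) and the same signature difference is accounted for by the basic condition making $I_{\phi}$ anisotropic at $\infty$ modulo center — one concludes $I_{\phi}\cong SO(N_{d,\mathbb{Q}})$, compatibly with the given local isomorphisms. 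Finally, trace through the construction of the uniformization in \cite{Xu} to check that the diagonal embedding $I_{\phi}(\mathbb{Q})\hookrightarrow J_b(\mathbb{Q}_p)\times G(\mathbb{A}_f^p)$ matches, under this isomorphism, the natural diagonal embedding $SO(N_{d,\mathbb{Q}})\hookrightarrow SO(N_{d,\mathbb{Q}_p})\times SO(N_{d,\mathbb{A}_f^p})$.

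\textbf{Main obstacle.}
The delicate point is Step 3, the passage from a local description at each place to a genuine isomorphism of $\mathbb{Q}$-algebraic groups: one must verify that $I_{\phi}$, a priori defined abstractly via the Langlands-Rapoport gerbe, is not merely locally isomorphic but \emph{globally} isomorphic to $SO(N_{d,\mathbb{Q}})$, i.e. that the two inner forms of $SO(\Lambda_{d,\mathbb{Q}})$ coincide. This requires knowing that $I_{\phi}$ is an inner form of $G$ (standard, since $b$ is basic), that it has the correct behaviour at the real place (it is the unique form that is compact modulo center at $\infty$, which matches $SO(N_{d,\mathbb{R}})$ because $N_d$ is negative definite for a supersingular K3 — this follows from the signature of $NS(X)$ being $(1,21)$), and that inner forms of $SO$ over $\mathbb{Q}$ are determined by their localizations (Hasse principle for adjoint/semisimple groups, or directly via the classification of quadratic forms). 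The cleanest route is to cite the corresponding computation of $I_{\phi}$ for GSpin Shimura varieties already present in the literature on the Kuga-Satake construction in characteristic $p$ (e.g. \cite{Isogenies}) and then simply pass to the special orthogonal quotient; I expect this is exactly how the proof will proceed, reducing the global comparison to the known Hodge-type case together with Proposition \ref{NSintegral}.
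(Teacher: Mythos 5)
Your closing guess is right about the overall strategy: the paper does reduce to the Hodge-type (GSpin) case and then passes to the special orthogonal quotient. Concretely, it uses the sequence $1 \to \mathbb{G}_m \to I_{\tilde{\phi}} \to I_{\phi} \to 1$ induced by $\tilde{G} \to G$, the description from \cite{Kisin} of $I_{\tilde{\phi}}$ as the largest closed $\mathbb{Q}$-subgroup of $\mathrm{Aut}^0(A)$ landing in $GSpin$ at every place, and Theorem \ref{Madapusi-Pera}(\ref{NSKS}) to identify $L(A)_{\mathbb{Q}}$ with $N_{d,\mathbb{Q}}$. The conjugation action of $I_{\tilde{\phi}}$ on $\mathrm{End}^0(A)$ then preserves $L(A)_{\mathbb{Q}}$ together with its quadratic form, giving a \emph{canonical} homomorphism $I_{\tilde{\phi}} \to SO(N_{d,\mathbb{Q}})$ defined over $\mathbb{Q}$; exactness of $1 \to \mathbb{G}_m \to I_{\tilde{\phi}} \to SO(N_{d,\mathbb{Q}}) \to 1$ is then checked after $\otimes\, \mathbb{Q}_{\ell}$, where it becomes $1 \to \mathbb{G}_m \to \tilde{G}_{\mathbb{Q}_{\ell}} \to G_{\mathbb{Q}_{\ell}} \to 1$. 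Because the map is built directly out of the action on special endomorphisms, the compatibility with the local identifications is automatic. (Note also that Proposition \ref{NSintegral} plays no role here; it enters only later, in the computation of the image of the period map.)

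Your Step 3 instead tries to produce the isomorphism by a local-global argument, and this is where the genuine gap sits. The Hasse principle for forms of the adjoint group $SO_{21}$ would at best give the \emph{existence} of some $\mathbb{Q}$-isomorphism $I_{\phi} \cong SO(N_{d,\mathbb{Q}})$; it does not produce the specific isomorphism whose localizations agree with the prescribed identifications $J_b \cong SO(N_{d,\mathbb{Q}_p})$ and $G_{\mathbb{A}_f^p} \cong SO(N_{d,\mathbb{A}_f^p})$, and that compatibility is part of the statement (and is exactly what the uniformization theorems later use). ``Trace through the construction in \cite{Xu}'' is precisely the step that requires an argument, and the argument is the direct construction above. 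A second, smaller but real error: you repeatedly describe $I_{\phi}(\mathbb{Q})$ as consisting of isometries \emph{preserving the lattices} $\Lambda_{d,\mathbb{Z}_{\ell}}$ (resp.\ $N_{d,\mathbb{Z}_{\ell}}$). That is false: $I_{\phi}(\mathbb{Q})$ consists of quasi-isogenies and equals all of $SO(N_{d,\mathbb{Q}})$; only the intersections $\Gamma_g = I_{\phi}(\mathbb{Q}) \cap gK_n^p g^{-1}$ preserve integral structures. With your description you would land on an arithmetic subgroup rather than the $\mathbb{Q}$-points of the algebraic group, which is not what the Proposition asserts.
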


\begin{proof}
It follows from the definition of the automorphism group that the short exact sequence
$$ 1 \to \mathbb{G}_{m,\mathbb{Q}} \to \tilde{G} \overset{ad}{\to} G \to 1 $$
induces a short exact sequence
$$ 1 \to \mathbb{G}_{m, \mathbb{Q}} \to I_{\tilde{\phi}} \to I_{\phi} \to 1.$$
We first construct a morphism $I_{\tilde{\phi}} \to SO(N_{d,\mathbb{Q}})$.

Denoting by $\mathrm{Aut}^0(A)$ the $\mathbb{Q}$-group scheme of units in $\mathrm{End}^0(A):=\mathrm{End}(A) \otimes \mathbb{Q}$, it is shown in \cite{Kisin} that $I_{\tilde{\phi}}$ is the largest closed $\mathbb{Q}$-subgroup of $\mathrm{Aut}^0(A)$ which maps to $\tilde{G}_{\mathbb{Q}_{\ell}}= GSpin(P_{\acute{e}t}^2(X, \mathbb{Q}_{\ell}))$ for all $\ell \not=p$, and to $\tilde{G}_{\breve{\mathbb{Q}}_p}=GSpin(P^2_{cris}(X/W)[\frac{1}{p}])$.
In particular, the action of $I_{\tilde{\phi}}$ by conjugation preserves the subspaces $L_{\ell,A}\otimes_{\mathbb{Z}_{\ell}} \mathbb{Q}_{\ell} \subset \mathrm{End}(H_{\acute{e}t}^1(A, \mathbb{Q}_{\ell}))$ for all $\ell \not= p$ and the subspace $L_{cris,A}[\frac{1}{p}] \subset \mathrm{End}(H^1_{cris}(A/W)[\frac{1}{p}])$.
By Theorem \ref{Madapusi-Pera}(\ref{NSKS}) we have an identification $SO(N_{d,\mathbb{Q}}) = SO(L(A)_{\mathbb{Q}})$, where $L(A) \subset \mathrm{End}(A)$ denotes the set of special endomorphisms.
There is a natural map $$\mathrm{Aut}^0(A) \to \mathrm{Aut}(\mathrm{End}^0(A)), g \mapsto \mathrm{conj}_g$$ and we need to show that the restriction to the subgroup $I_{\tilde{\phi}} \subset \mathrm{Aut}^0(A)$ maps to $SO(L(A)_{\mathbb{Q}})$.
For this, recall that $L(A) \subset \mathrm{End}(A)$ is the subset of endomorphisms whose $\ell$-adic realizations lie in $L_{\ell, A}$ for all $\ell \not= p$ and whose crystalline realization lies in $L_{cris, A}$.
Since $I_{\tilde{\phi}}$ preserves the subspaces $L_{\ell,A}\otimes_{\mathbb{Z}_{\ell}} \mathbb{Q}_{\ell}$ and $L_{cris,A}[\frac{1}{p}]$ we get the desired map to $\mathrm{Aut}(L(A)_{\mathbb{Q}})$.
We see that it in fact maps to $SO(L(A)_{\mathbb{Q}})$ since $$\mathrm{conj}_g(f)\circ \mathrm{conj}_g(f) = \mathrm{conj}_g(f\circ f) =f \circ f,$$ where the last equality holds since $ f\circ f$ is a scalar.
In particular we get a sequence
\begin{equation} \label{ses} 1 \to \mathbb{G}_{m, \mathbb{Q}} \to I_{\tilde{\phi}} \to SO(N_{d,\mathbb{Q}}) \to 1, \end{equation}
which after tensoring with $\mathbb{Q}_{\ell}$ for any $\ell \not= p$ becomes the short exact sequence
\begin{equation} \label{compatibility} 1 \to \mathbb{G}_{m, \mathbb{Q}_{\ell}} \to \tilde{G}_{\mathbb{Q}_{\ell}} \to G_{\mathbb{Q}_{\ell}} \to 1. \end{equation}
We conclude that (\ref{ses}) is a short exact sequence, hence we get an isomorphism $I_{\phi} \cong SO(N_{d,\mathbb{Q}})$.
The compatibility assertions follow from the sequence (\ref{compatibility}) for all $\ell \not=p$ and the analogous sequence at $p$.
\end{proof}

Since $N_{d, \mathbb{R}}$ is negative definite, the real group $SO(N_{d, \mathbb{R}})$ is compact. Therefore the image $I_{\phi}(\mathbb{Q}) \subset J_b(\mathbb{Q}_p) \times G(\mathbb{A}_f^p)$ is a discrete subgroup.
As a consequence, for a compact open $K^p \subset G(\mathbb{A}_f^p)$, the intersection $I_{\phi}(\mathbb{Q}) \cap K^p \subset J_b(\mathbb{Q}_p)$ is a discrete subgroup.
We emphasize that by (\cite{K3crystals}, Theorem 7.4.1) the isomorphism class of the lattice $NS(X)_{\mathbb{Q}}$ does not depend on the choice of the supersingular K3 surface $X$.

For a general $x \in M_{2d,n, \mathbb{Z}_{(p)}}(\bar{\mathbb{F}}_p)$ with associated Langlands-Rapoport parameter $\phi$, one can use the results of \cite{Isogenies} to show that under the assumption that $p > 18d+4$, the group $I_{\phi}$ is the group of self-isogenies of the corresponding polarized K3 surface $(X, \xi)$ in the sense of (\cite{Isogenies}, Definition 1.1).

\subsection{The $p$-adic period map}\label{defpadicper}

In this section we define a $p$-adic period map for the completion of $M_{2d,n,W}$ along the supersingular locus. The starting point of the construction is the integral period map
$ \varphi: M_{2d,n, W} \hookrightarrow \mathscr{S}_{n, W}$ defined in Proposition \ref{integralext}.
Under this period map, the basic locus $\overline{\mathscr{S}}_{n}^b \subset \overline{\mathscr{S}}_{n}$ pulls back to the supersingular locus
$$\overline{M}_{2d,ss,n} \subset \overline{M}_{2d,n} := M_{2d,n, \bar{\mathbb{F}}_p}.$$
To see this, note that for $y \in \overline{M}_{2d,n}(\bar{\mathbb{F}}_p)$ corresponding to a polarized K3 surface $(X, \xi)$ over $\bar{\mathbb{F}}_p$, the $G$-isocrystal attached to the point $\varphi(y) \in \overline{\mathscr{S}}_n(\bar{\mathbb{F}}_p)$ is precisely the $G$-isocrystal $P_{cris}^2(X/W)[\frac{1}{p}](1)$ as studied in Section \ref{K3pos}. We refer to (\cite{MP}, Lemma 4.9) for a proof of this fact.

Denote by $\breve{M}_{2d,ss,n,W} := \widehat{{M_{2d,n,W}}}_{/\overline{M}_{2d,ss,n}} $ the formal scheme over $\mathrm{Spf \,} W$ which is the completion of $M_{2d,n,W}$ along the supersingular locus. We denote by $M_{2d,n, \breve{\mathbb{Q}}_{p}}^{ss}$ the rigid analytic generic fiber of this formal scheme.
The completion of $\varphi$ induces a period map
$$ \breve{\varphi}: \breve{M}_{2d,ss,n,W} \hookrightarrow \widehat{ {\mathscr{S}_{n,W}}_{/\overline{\mathscr{S}}_{n}^b}}$$ of formal schemes over $W$, which is still an open immersion.
Together with the Rapoport-Zink uniformization of the Shimura variety $\mathscr{S}_n$ this gives rise to a rigid analytic period map
$$ \varphi^{rig}: (\breve{M}_{2d,ss,n,W} )_{\eta}^{rig} \hookrightarrow Sh_n^b \overset{\sim}{\rightarrow} I_{\phi}(\mathbb{Q}) \backslash \mathcal{M} \times G(\mathbb{A}_f^p) / K_n^p$$
as well as to a period map of perfect schemes
\begin{equation} \label{perconstr} \overline{\varphi}: \overline{M}_{2d,ss,n}^{perf} \hookrightarrow \overline{\mathscr{S}}_n^{b,perf} \overset{\sim}{\rightarrow} I_{\phi}(\mathbb{Q})\backslash X_{\mu}^G(b) \times G(\mathbb{A}_f^p)/K_n^p \end{equation}
over $\bar{\mathbb{F}}_p$.

We now provide a direct description of the period map
\begin{equation}\label{periodmap} \overline{\varphi}: \overline{M}_{2d,ss,n}^{perf} \hookrightarrow I_{\phi}(\mathbb{Q}) \backslash X_{\mu}^G(b) \times G(\mathbb{A}_f^p)/K_n^p \end{equation}
on the level of $\bar{\mathbb{F}}_p$-points.
We first fix an auxiliary supersingular polarized K3 surface $(\mathbb{X}, \mathbb{L})$ and set $N:= NS(\mathbb{X})$. Denoting by $\lambda \in N$ the class of the line bundle $\mathbb{L}$, set $N_d := \langle \lambda \rangle^{\perp} \subset N$. The choice of isometries $\Lambda_{\hat{\mathbb{Z}}^p} \cong H^2_{\acute{e}t}(\mathbb{X}, \hat{\mathbb{Z}}^p)$ and $\Lambda_{\mathbb{Q}_p} \cong (H^2_{cris}(\mathbb{X}/W)[\frac{1}{p}])^{\varphi=p}$ mapping $\lambda$ to $c_{\hat{\mathbb{Z}}^p}(\mathbb{L})$ (respectively $c_{cris}(\mathbb{L})$) allows us to identify \begin{equation}\label{padicidentification} N_{\hat{\mathbb{Z}}^p} \cong \Lambda_{\hat{\mathbb{Z}}^p}, \, \, \, \, N_{\mathbb{Q}_p} \cong \Lambda_{\mathbb{Q}_p} \end{equation}
in a way which preserves $\lambda$.
For $(X, \xi, \eta \mathcal{O}_n(\hat{\mathbb{Z}}^p) )$ a polarized supersingular K3 surface of degree $2d$ over $\bar{\mathbb{F}}_p$ with a level structure $\eta: \Lambda_{\hat{\mathbb{Z}}^p} \overset{\sim}{\to} H_{\acute{e}t}^2(X, \hat{\mathbb{Z}}^p)$ mapping $\lambda$ to $c_{\hat{\mathbb{Z}}^p}(\xi)$, we define its image under the $p$-adic period map as follows.
Choose an isometry $\rho: NS(X)_{\mathbb{Q}} \overset{\sim}{\to} N_{\mathbb{Q}}$ mapping $\xi$ to $\lambda$, as well as an isometry $h: \Lambda_W \overset{\sim}{\to} H^2_{cris}(X/W)$ mapping $\lambda$ to $c_{cris}(\xi)$.
The composition $\rho_{\breve{\mathbb{Q}}_p} \circ h_{\breve{\mathbb{Q}}_p}$ defines an isometry
$$ \Lambda_{\breve{\mathbb{Q}}_p} \overset{h_{\breve{\mathbb{Q}}_p}}{\longrightarrow} H^2_{cris}(X/W)[\frac{1}{p}]\cong NS(X)_{\breve{\mathbb{Q}}_p} \overset{\rho_{\breve{\mathbb{Q}}_p}}{\longrightarrow} N_{\breve{\mathbb{Q}}_p} \cong \Lambda_{\breve{\mathbb{Q}}_p}$$
that preserves $\lambda$.
Since $\mathcal{O}(W)$ contains a reflection of determinant $-1$, one can choose $h$ in such a way that $\rho_{\breve{\mathbb{Q}}_p} \circ h_{\breve{\mathbb{Q}}_p} \in G(\breve{\mathbb{Q}}_p)$. A different choice of $h$ amounts to changing the element in $G(\breve{\mathbb{Q}}_p)$ by an element of $\mathcal{G}(W)$, and therefore the corresponding element in $X_{\mu}^G(b)$ is independent of the choice of $h$.
Similarly, the composition $\rho_{\mathbb{A}_f^p} \circ \eta_{\mathbb{A}_f^p}$ gives an isometry
$$ \Lambda_{\mathbb{A}_f^p} \overset{\eta_{\mathbb{A}_f^p}}{\longrightarrow} H_{\acute{e}t}^2(X, \mathbb{A}_f^p) \cong NS(X)_{\mathbb{A}_f^p} \overset{\rho_{\mathbb{A}_f^p}}{\longrightarrow} N_{\mathbb{A}_f^p} \cong \Lambda_{\mathbb{A}_f^p} $$
preserving $\lambda$.
By our assumption that $n$ is the power of a prime number, the group $\mathcal{O}_n(\mathbb{Z}_{\ell})$ contains a reflection of determinant $-1$ for all but one $\ell$. Replacing $\eta$ with a different element of the coset $\eta \mathcal{O}_n(\hat{\mathbb{Z}}^p)$ therefore allows us to change the determinant at all places except for the prime dividing $n$. This shows that we may choose $\eta$ and $\rho$ in such a way that $\rho_{\mathbb{A}_f^p} \circ \eta_{\mathbb{A}_f^p} \in G(\mathbb{A}_f^p)$.
We define the image of $(X, \xi, \eta \mathcal{O}_n(\hat{\mathbb{Z}}^p))$ to be the class of the pair $(\rho_{\breve{\mathbb{Q}}_p} \circ h_{\breve{\mathbb{Q}}_p}, \rho_{\mathbb{A}_f^p} \circ \eta_{\mathbb{A}_f^p})$ in the double quotient $$I_{\phi}(\mathbb{Q}) \backslash X_{\mu}^G(b) \times G(\mathbb{A}_f^p)/K_n^p.$$

We claim that this construction is independent of all choices. Namely, a different isogeny $\rho': NS(X)_{\mathbb{Q}} \to N_{\mathbb{Q}}$ is of the form $\rho' = q\circ\rho$ for $q \in O(N_{d,\mathbb{Q}})$. Similarly, a different choice of $\eta$ amounts to replacing it by $\eta' = \eta \circ k$ for some $k \in \mathcal{O}_n(\hat{\mathbb{Z}}^p)$.
Recall that we restricted ourselves to choices such that $\rho'_{\mathbb{A}_f^p} \circ \eta'_{\mathbb{A}_f^p} \in G(\mathbb{A}_f^p)$.
As a consequence, $\mathrm{det}(q) \cdot \mathrm{det}(k)=1$. Since $n \ge 3$, the element $k \in \mathcal{O}_n(\hat{\mathbb{Z}}^p)$ has a component $k_{\ell} \in \mathcal{O}_n(\mathbb{Z}_{\ell})$ of determinant $1$. It follows that $\mathrm{det}(q) = \mathrm{det}(k)=1$.
We conclude that $q \in I_{\phi}(\mathbb{Q}) = SO(N_{d, \mathbb{Q}})$ and $k \in K_n^p$, and hence the corresponding element in the double quotient $$I_{\phi}(\mathbb{Q}) \backslash X_{\mu}^G(b) \times G(\mathbb{A}_f^p)/K_n^p$$ is independent of all choices.

\begin{rem} \textnormal{ }
\begin{enumerate}[(i)]
\item
We only study the period map on the perfection of the reduction, since in this case the target of the period map is a Deligne-Lusztig variety, which has a nice group-theoretic description.
\item
Strictly speaking, we did not show that the class of $\rho_{\breve{\mathbb{Q}}_p} \circ h_{\breve{\mathbb{Q}}_p} \in G(\breve{\mathbb{Q}}_p)$ lies in $X_{\mu}^G(b)$. This can be seen either from the classical fact that the Hodge polygon of the crystal $P^2_{cris}(X/W)(1)$ coincides with the Hodge polygon of $\mu$ (Mazur, Ogus, Nygaard; cf. the survey \cite{Liedtke}, Theorem 3.8), or from the description below which shows that $\rho_{\breve{\mathbb{Q}}_p} \circ h_{\breve{\mathbb{Q}}_p}$ arises from an element in $X_{\tilde{\mu}}^{\tilde{G}}(\tilde{b})$ via the map $\tilde{G}(\breve{\mathbb{Q}}_p) \to G(\breve{\mathbb{Q}}_p)$.
\end{enumerate}
\end{rem}

We sketch how to justify that the morphism defined in this direct manner coincides with the construction in (\ref{perconstr}) using the integral period map and Rapoport-Zink uniformi\-zation.
\begin{claim} The period map constructed in (\ref{perconstr}) coincides with the above direct description.
\end{claim}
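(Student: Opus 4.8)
The plan is to reduce the statement to the $GSpin$ cover, where $\widetilde{\mathscr{S}}_n$ is of Hodge type and Shen's uniformization (building on Kim, cf. \cite{Xu}, \cite{Wansu}) is an explicit statement about isogeny classes of Kuga-Satake abelian varieties, and then to transport everything back to K3 surfaces via the dictionary of Theorem \ref{Madapusi-Pera}. Concretely, I would first use the commuting diagram relating the Rapoport-Zink uniformizations of $\widetilde{\mathscr{S}}_n$ and $\mathscr{S}_n$ via the $ad$-morphism, together with the fact that the integral period map admits, point by point over $\bar{\mathbb{F}}_p$, a Kuga-Satake lift $\tilde\varphi(x)\in\widetilde{\mathscr{S}}_n(\bar{\mathbb{F}}_p)$. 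It then suffices to show that the Hodge-type uniformization $\overline{\widetilde{\mathscr{S}}}_n^{\tilde b,perf}\cong I_{\tilde\phi}(\mathbb{Q})\backslash X_{\tilde\mu}^{\tilde G}(\tilde b)\times\tilde G(\mathbb{A}_f^p)/\tilde K_n^p$ sends $\tilde\varphi(x)$ to the class of the pair obtained by letting $\rho_{\breve{\mathbb{Q}}_p}\circ h_{\breve{\mathbb{Q}}_p}$ and $\rho_{\mathbb{A}_f^p}\circ\eta_{\mathbb{A}_f^p}$ act through the Clifford algebra $CL(\Lambda_{d})$, since applying $\tilde G\to G=\tilde G^{ad}$ and Proposition \ref{isogenygroup} then yields exactly the class described in the text.

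Second, I would recall how the Hodge-type uniformization is defined on $\bar{\mathbb{F}}_p$-points: the basic stratum $\overline{\widetilde{\mathscr{S}}}_n^{\tilde b}$ is the isogeny class of the Kuga-Satake abelian variety $\mathbb{A}$ of the base point $(\mathbb{X},\mathbb{L})$, equipped with its crystalline and prime-to-$p$ \'etale Hodge tensors; a point $\tilde\varphi(x)$ of this isogeny class, corresponding to the Kuga-Satake abelian variety $A$ of $(X,\xi)$, determines a quasi-isogeny $\mathbb{A}\dashrightarrow A$ compatible with tensors up to the ambiguity of $I_{\tilde\phi}(\mathbb{Q})$, which produces (i) an element of $\tilde G(\breve{\mathbb{Q}}_p)$ well-defined modulo $\tilde{\mathcal{G}}(W)$, namely the comparison of the Dieudonn\'e modules $H^1_{cris}(A/W)$ and $H^1_{cris}(\mathbb{A}/W)$, landing in $X_{\tilde\mu}^{\tilde G}(\tilde b)$ because the crystal has the expected Hodge polygon, and (ii) an element of $\tilde G(\mathbb{A}_f^p)$ well-defined modulo $\tilde K_n^p$, namely the comparison of $H^1_{\acute et}(A,\hat{\mathbb{Z}}^p)$ with $H^1_{\acute et}(\mathbb{A},\hat{\mathbb{Z}}^p)$ through the level structure.

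Third, I would invoke Theorem \ref{Madapusi-Pera}: the identifications $H^1_{cris}(A/W)\cong CL(P^2_{cris}(X/W))$ and $H^1_{\acute et}(A,\hat{\mathbb{Z}}^p)\cong CL(P^2_{\acute et}(X,\hat{\mathbb{Z}}^p))$ are $GSpin$-equivariant and identify the special endomorphisms $L(A)_{\mathbb{Q}}$ with $\langle\xi\rangle^{\perp}\subset NS(X)_{\mathbb{Q}}$, and likewise $L(\mathbb{A})_{\mathbb{Q}}=N_{d,\mathbb{Q}}$. Under these, the comparison quasi-isogeny $\mathbb{A}\dashrightarrow A$ is exactly the Kuga-Satake image of a quasi-isogeny of K3 surfaces (this is the content of \cite{Isogenies}: the Kuga-Satake functor carries quasi-isogenies of supersingular K3 surfaces to quasi-isogenies of Kuga-Satake abelian varieties matching the special-endomorphism subspaces), so at $p$ it corresponds to $h_{\breve{\mathbb{Q}}_p}$ followed by the Tate-conjecture isomorphism $H^2_{cris}(X/W)[\tfrac1p]\cong NS(X)_{\breve{\mathbb{Q}}_p}$ and by $\rho_{\breve{\mathbb{Q}}_p}$, and away from $p$ to $\rho_{\mathbb{A}_f^p}\circ\eta_{\mathbb{A}_f^p}$; passing to the adjoint group and to the double quotient, using the compatibility of $I_\phi\cong SO(N_{d,\mathbb{Q}})$ with $G_{\mathbb{A}_f^p}\cong SO(N_{d,\mathbb{A}_f^p})$ and $J_b\cong SO(N_{d,\mathbb{Q}_p})$ from Proposition \ref{isogenygroup}, gives precisely the class of $(\rho_{\breve{\mathbb{Q}}_p}\circ h_{\breve{\mathbb{Q}}_p},\,\rho_{\mathbb{A}_f^p}\circ\eta_{\mathbb{A}_f^p})$ in $I_\phi(\mathbb{Q})\backslash X_\mu^G(b)\times G(\mathbb{A}_f^p)/K_n^p$.

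The main obstacle is matching the abstract recipe defining the uniformization isomorphism in \cite{Xu} and \cite{Wansu} --- which is phrased through the Langlands-Rapoport parametrization of the basic stratum as a single isogeny class $\overline{\mathscr{S}}(\tilde\phi)$ together with the crystalline and Betti realizations of the universal abelian scheme --- with the realization-theoretic bookkeeping above; in particular one must verify that the identification of $\overline{\widetilde{\mathscr{S}}}_n^{\tilde b}$ with the isogeny class sends $\tilde\varphi(x)$ to the Kuga-Satake abelian variety $A$ carrying the distinguished tensors $L_{cris,A}$ and $L_{\ell,A}$, so that the comparison quasi-isogeny is literally the Kuga-Satake image of an isogeny of K3 surfaces. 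A secondary, purely combinatorial point is to track the determinant ambiguities --- the distinction between the group schemes $\mathcal{G}$, $\mathcal{O}$ and $SO(\Lambda_d)$, and the fact that exactly for $n=q^r$ one can absorb the sign ambiguity at all but one place into the level structure --- in the same way as in the definition of the direct map, so as to land in $SO$ and not merely $O$.
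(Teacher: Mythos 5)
Your proposal follows essentially the same route as the paper's proof: pass to the $GSpin$ cover via the commuting diagram, describe the Hodge-type uniformization on $\bar{\mathbb{F}}_p$-points through a $GSpin$-isogeny $\mathbb{A}\dashrightarrow A$ matching the distinguished tensors, transport this to an isometry $\rho: NS(X)_{\mathbb{Q}}\overset{\sim}{\to}N_{\mathbb{Q}}$ via Theorem \ref{Madapusi-Pera}, and descend along $\tilde{G}\to G$. The steps, including the acknowledged need to match the abstract recipe of \cite{Xu} with the realization-theoretic bookkeeping, mirror the paper's argument.
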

\begin{proof}
We recall that we have a commutative diagram
$$\xymatrix{ & \widetilde{\mathscr{S}_{n,} }_{\bar{\mathbb{F}}_p}^{\tilde{b}, perf} \ar[r]^-{\sim}\ar[d] & I_{\tilde{\phi}}(\mathbb{Q})\backslash X_{\tilde{\mu}}^{\tilde{G}}(\tilde{b}) \times \tilde{G}(\mathbb{A}_f^p)/\tilde{K}_n^p \ar[d] \\
\overline{M}_{2d,ss,n}^{perf} \ar@{^{(}->}[r] & \overline{\mathscr{S}}_n^{b,perf} \ar[r]^-{\sim}& I_{\phi}(\mathbb{Q})\backslash X_{\mu}^G(b) \times G(\mathbb{A}_f^p)/K_n^p.
} $$

Lift the $\bar{\mathbb{F}}_p$-point of $\overline{\mathscr{S}}_n^{b}$ corresponding to the auxiliary polarized K3 surface $(\mathbb{X}, \mathbb{L})$ to an $\bar{\mathbb{F}}_p$-point of $\widetilde{\mathscr{S}_{n,} }_{\bar{\mathbb{F}}_p}^{\tilde{b}} $, and denote by $\mathbb{A}$ the associated abelian variety with distinguished subspaces $L_{\ell, \mathbb{A}} \subset \mathrm{End}(H_{\acute{e}t}^1(\mathbb{A},\mathbb{Z}_{\ell}))$ and $L_{cris, \mathbb{A}} \subset \mathrm{End}(H_{cris}^1(\mathbb{A}/W))$.
Let $y \in \overline{M}_{2d,ss,n}(\bar{\mathbb{F}}_p)$ correspond to a polarized K3 surface $ (X, \xi, \eta \mathcal{O}_n(\hat{\mathbb{Z}}^p) )$ with level structure. A lift $\widetilde{\varphi(y)} \in \widetilde{\mathscr{S}_{n,} }_{\bar{\mathbb{F}}_p}^{\tilde{b}}(\bar{\mathbb{F}}_p)$ of the image $\varphi(y) \in \overline{\mathscr{S}}_n^{b}(\bar{\mathbb{F}}_p)$ gives a Kuga-Satake abelian variety $A$ with level structure $\tilde{\eta}\tilde{K}_n^p$ and distinguished subspaces $L_{\ell, A} \subset \mathrm{End}(H_{\acute{e}t}^1(A,\mathbb{Z}_{\ell})) $ and $L_{cris, A} \subset \mathrm{End}(H_{cris}^1(A/W))$.

From the description of the Rapoport-Zink uniformization in the Hodge type case, the image of $\widetilde{\varphi(y)}$ in $$I_{\tilde{\phi}}(\mathbb{Q})\backslash X_{\tilde{\mu}}^{\tilde{G}}(\tilde{b}) \times \tilde{G}(\mathbb{A}_f^p)/\tilde{K}_n^p$$ is constructed as follows:
one first chooses a $GSpin$-isogeny $\tilde{\rho} \in \mathrm{Hom}(\mathbb{A},A) \otimes \mathbb{Q}$ in the sense of (\cite{Isogenies}, Definition 3.3). By the definition of a $GSpin$-isogeny, the realizations $\tilde{\rho}_{\mathbb{Q}_{\ell}} \in \mathrm{Isom}(H_{\acute{e}t}^1(A,\mathbb{Q}_{\ell}), H_{\acute{e}t}^1(\mathbb{A},\mathbb{Q}_{\ell}))$ and $\tilde{\rho}_{\breve{\mathbb{Q}}_p} \in \mathrm{Isom}(H_{cris}^1(A/W)[\frac{1}{p}], H_{cris}^1(\mathbb{A}/W)[\frac{1}{p}])$ match up the subspaces $L_{\ell, A}\otimes_{\mathbb{Z}_{\ell}} \mathbb{Q}_{\ell}$ and $L_{\ell, \mathbb{A}} \otimes_{\mathbb{Z}_{\ell}} \mathbb{Q}_{\ell}$, respectively $L_{cris, A}[\frac{1}{p}]$ and $L_{cris, \mathbb{A}}[\frac{1}{p}]$.
It follows from Theorem \ref{Madapusi-Pera}(\ref{NSKS}) that $\tilde{\rho}$ induces an isometry $\rho: NS(X)_{\mathbb{Q}} \overset{\sim}{\to} N_{\mathbb{Q}}$ mapping $\xi$ to $\lambda$ (compare also \cite{Isogenies}, Proposition 5.2).

We obtain an isometry
$$CL(\Lambda_{d,\mathbb{A}_f^p}) \overset{\tilde{\eta}_{\mathbb{A}_f^p}}{\longrightarrow} H_{\acute{e}t}^1(A, \mathbb{A}_f^p) \overset{\tilde{\rho}_{\mathbb{A}_f^p}}{\longrightarrow} H_{\acute{e}t}^1(\mathbb{A}, \mathbb{A}_f^p) \cong CL(\Lambda_{d,\mathbb{A}_f^p}) $$
which is an element of $\tilde{G}(\mathbb{A}_f^p)$ and in particular preserves the subspace $\Lambda_{d, \mathbb{A}_f^p}$.
Similarly, after choosing an appropriate $\tilde{h}: CL(\Lambda_{d,W}) \cong H^1_{cris}(A/W)$ matching up the subspaces $\Lambda_{d,W}$ and $L_{cris, A}$, we get
$$CL(\Lambda_{d,\breve{\mathbb{Q}}_p}) \overset{\tilde{h}_{\breve{\mathbb{Q}}_p}}{\longrightarrow} H^1_{cris}(A/W)[\frac{1}{p}] \overset{\tilde{\rho}_{\breve{\mathbb{Q}}_p}}{\longrightarrow} H^1_{cris}(\mathbb{A}/W)[\frac{1}{p}] \cong CL(\Lambda_{d,\breve{\mathbb{Q}}_p})$$
which we can view as an element of $\tilde{G}(\breve{\mathbb{Q}}_p)$.
Then $\tilde{h}$ induces an isometry $h:\Lambda_W \overset{\sim}{\to} H^2_{cris}(X/W)$ mapping $\lambda$ to $c_{cris}(\xi)$ using Theorem \ref{Madapusi-Pera}(\ref{naturalsubspaces}).
The image of $\widetilde{\varphi(y)} $ in $$I_{\tilde{\phi}}(\mathbb{Q})\backslash X_{\tilde{\mu}}^{\tilde{G}}(\tilde{b}) \times \tilde{G}(\mathbb{A}_f^p)/\tilde{K}_n^p$$ is then the class of the pair $(\tilde{\rho}_{\breve{\mathbb{Q}}_p} \circ \tilde{h}_{\breve{\mathbb{Q}}_p}, \tilde{\rho}_{\mathbb{A}_f^p} \circ \tilde{\eta}_{\mathbb{A}_f^p})$.
Via the natural map $\tilde{G} \to G$ and using Theorem \ref{Madapusi-Pera} it follows that the image in $I_{\phi}(\mathbb{Q})\backslash X_{\mu}^G(b) \times G(\mathbb{A}_f^p)/K_n^p$ is precisely the class of the pair $(\rho_{\breve{\mathbb{Q}}_p} \circ h_{\breve{\mathbb{Q}}_p}, \rho_{\mathbb{A}_f^p} \circ \eta_{\mathbb{A}_f^p})$, as desired.
\end{proof}

The description on $\bar{\mathbb{F}}_p$-points translates the injectivity of the period map into the following statement:

\begin{thm}[\cite{Ogus}, Theorem II]
Let $(X, \xi)$ and $(X', \xi')$ be two polarized K3 surfaces over $\bar{\mathbb{F}}_p$ and $\iota: NS(X') \overset{\sim}{\rightarrow} NS(X)$ an isometry mapping $\xi'$ to $\xi$, which extends to a diagram
$$ \xymatrix{ NS(X') \ar[r]^{\iota}\ar[d] & NS(X) \ar[d] \\
H^2_{cris}(X'/W) \ar[r]^{\sim} & H^2_{cris}(X/W), }$$
where the lower map is an isomorphism of $F$-crystals.
Then $\iota $ is induced by a unique isomorphism of K3 surfaces $f: X \overset{\sim}{\to} X'$.
\end{thm}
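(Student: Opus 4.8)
The plan is to deduce this statement — Ogus's crystalline Torelli theorem — from the injectivity of the integral period map $\varphi\colon M_{2d,n,W}\hookrightarrow\mathscr{S}_{n,W}$ of Proposition \ref{integralext}, together with the explicit description of the reduced $p$-adic period map $\overline{\varphi}$ on $\bar{\mathbb{F}}_p$-points given in Section \ref{defpadicper}. We may and do assume that $X$ is supersingular, equivalently that $NS(X)$ attains the maximal rank $22$, which is the setting Ogus's theorem concerns; then $X'$ is supersingular as well, since $NS(X')\cong NS(X)$ also has rank $22$. Fix a level $n=q^{r}\geq 3$, a power of a prime $q\neq p$, large enough that $M_{2d,n}$ is a fine moduli scheme.

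First I would exploit that supersingularity makes the $\ell$-adic side automatic: by the integral Tate conjecture (\ref{generatescoho}) one has $NS(X)\otimes\mathbb{Z}_{\ell}\cong H^{2}_{\acute{e}t}(X,\mathbb{Z}_{\ell})$ for every $\ell\neq p$, so $\iota$ induces isometries $\iota_{\ell}\colon H^{2}_{\acute{e}t}(X',\mathbb{Z}_{\ell})\overset{\sim}{\to}H^{2}_{\acute{e}t}(X,\mathbb{Z}_{\ell})$ respecting the Chern classes of the polarizations, hence an isometry $\iota^{p}$ of $\hat{\mathbb{Z}}^{p}$-cohomology. Choosing any level-$n$ structure $\eta$ on $(X,\xi)$ and setting $\eta':=(\iota^{p})^{-1}\circ\eta$, one obtains $\bar{\mathbb{F}}_p$-points $y,y'$ of $\overline{M}_{2d,ss,n}$ attached to $(X,\xi,\eta)$ and $(X',\xi',\eta')$. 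The crux is that $\overline{\varphi}(y)=\overline{\varphi}(y')$. Indeed, the recipe of Section \ref{defpadicper} sends $(X,\xi,\eta)$ to the class of a pair $(\rho_{\breve{\mathbb{Q}}_p}\circ h_{\breve{\mathbb{Q}}_p},\,\rho_{\mathbb{A}_f^{p}}\circ\eta_{\mathbb{A}_f^{p}})$ built from an auxiliary isometry $\rho\colon NS(X)_{\mathbb{Q}}\to N_{\mathbb{Q}}$ and a crystalline marking $h\colon\Lambda_{W}\to H^{2}_{cris}(X/W)$; for $(X',\xi',\eta')$ one is free to take $\rho':=\rho\circ\iota$, $h':=\psi^{-1}\circ h$ (with $\psi$ the given isomorphism of $F$-crystals) and $\eta'$ as above, and the integrality conditions $\rho'h'\in G(\breve{\mathbb{Q}}_p)$, $\rho'\eta'\in G(\mathbb{A}_f^{p})$ hold because $\rho'h'=\rho h$ and $\rho'\eta'=\rho\eta$. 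Here one uses that $\psi$ restricted to $NS(X')\otimes\mathbb{Z}_p=H^{2}_{cris}(X'/W)^{\varphi=p}$ is $\iota\otimes\mathbb{Z}_p$ (eq.\ \ref{generatescoho2}), that $\iota_{\ell}$ is $\iota\otimes\mathbb{Z}_{\ell}$, and that the crystal of a supersingular K3 is isoclinic — so that under the identifications of Section \ref{defpadicper} the transporting maps $\iota_{\breve{\mathbb{Q}}_p}\circ\psi^{-1}$ and $\iota_{\mathbb{A}_f^{p}}\circ(\iota^{p})^{-1}$ are the identity and the two pairs literally coincide. Since $\overline{\varphi}$ is injective — it is the perfection of the reduction of the open immersion $\varphi$ followed by the uniformization isomorphism of Theorem \ref{RZuniformization} — we conclude $y=y'$; that is, there is an isomorphism $f\colon X\overset{\sim}{\to}X'$ with $f^{*}\xi'=\xi$ and $f^{*}\eta'\equiv\eta\pmod{\mathcal{O}_n(\hat{\mathbb{Z}}^{p})}$.

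It then remains to upgrade $f$ so that it induces the prescribed $\iota$, and to see that $f$ is unique. Set $\gamma:=f^{*}\circ\iota^{-1}$, an auto-isometry of $NS(X)$ fixing $\xi$; translating the congruence $f^{*}\eta'\equiv\eta$ through $NS(X)\otimes\hat{\mathbb{Z}}^{p}\cong H^{2}_{\acute{e}t}(X,\hat{\mathbb{Z}}^{p})$ gives $\gamma\equiv1\pmod n$. The key geometric input is that $\langle\xi\rangle^{\perp}\subset NS(X)$ is negative definite of rank $21$, since $NS(X)$ has signature $(1,21)$ and $\xi$ is ample; hence $O(\langle\xi\rangle^{\perp})$ is finite, and as $\gamma$ fixes $\xi$ and preserves $\langle\xi\rangle^{\perp}$ it has finite order, so by Minkowski's lemma a finite-order element of $GL(NS(X))$ congruent to $1$ modulo $n\geq3$ is trivial. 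Thus $\gamma=1$ and $f^{*}=\iota$. For uniqueness, if $f,f'\colon X\overset{\sim}{\to}X'$ both induce $\iota$ then $f^{-1}\circ f'\in\mathrm{Aut}(X)$ acts trivially on $NS(X)$, hence on $H^{2}_{\acute{e}t}(X,\hat{\mathbb{Z}}^{p})=NS(X)\otimes\hat{\mathbb{Z}}^{p}$, hence fixes the level structure, and since $M_{2d,n}$ is a fine moduli scheme this forces $f^{-1}\circ f'=\mathrm{id}$.

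The main obstacle is this last step: extracting from the abstract equality of moduli points an isomorphism realising \emph{exactly} the prescribed $\iota$, rather than one of the isometries in its $\mathrm{Aut}(X)$-orbit. All the genuinely deep ingredients are imported — injectivity of $\varphi$ (Proposition \ref{integralext}), Madapusi-Pera's Kuga--Satake package in characteristic $p$ (Theorem \ref{Madapusi-Pera}), and the integral Tate conjecture — so the only non-formal point in the reduction is the rigidity coming from the finiteness of $O(\langle\xi\rangle^{\perp})$, which itself rests solely on $\xi$ being a polarization.
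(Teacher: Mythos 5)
This theorem is not proved in the paper at all: it is quoted from Ogus (\cite{Ogus}, Theorem II), and the sentences framing it make the intended logical relationship explicit --- the explicit description of $\overline{\varphi}$ on $\bar{\mathbb{F}}_p$-points \emph{translates} the injectivity of the period map into Ogus's statement, i.e.\ the two are reformulations of one another, with Ogus's original work supplying the actual proof. Your argument runs this translation in the opposite direction, deducing the theorem from the injectivity of the integral period map. That is where the genuine gap lies: the injectivity of $\varphi$ on supersingular $\bar{\mathbb{F}}_p$-points is not an independent input. The extension property of $\mathscr{S}_n$ only produces the morphism $\varphi$ and its \'etaleness; the upgrade to an open immersion in (\cite{MP}, Corollary 4.15) requires injectivity on geometric points in characteristic $p$, and on the supersingular locus this is obtained in the literature precisely by invoking Ogus's crystalline Torelli theorem (lifting to characteristic $0$ and using the classical Torelli theorem disposes of the finite-height points, but has no analogue over the supersingular locus, where the statement you are proving \emph{is} the required injectivity). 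So the proposal is circular: the single deep ingredient it imports already presupposes the conclusion. At a minimum you would need to supply, or cite, a proof of the injectivity of $\varphi$ over $\overline{M}_{2d,ss,n}$ that does not pass through the crystalline Torelli theorem; none is given, and the paper's own phrasing indicates that the author does not believe one is being used.

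The formal content of your reduction is nevertheless correct and is essentially the ``translation'' the paper alludes to, made precise. In particular: you rightly restrict to supersingular $X$ (as in Ogus's Theorem II --- for finite height the $F$-crystal is too coarse an invariant and the statement as printed would fail, e.g.\ for two generic ordinary K3 surfaces); the verification that $(X,\xi,\eta)$ and $(X',\xi',(\iota^p)^{-1}\eta)$ have the same image under $\overline{\varphi}$, via the choices $\rho'=\rho\circ\iota$ and $h'=\psi^{-1}\circ h$ in the recipe of Section \ref{defpadicper}, is correct; and the rigidity step --- pinning down $f^{*}=\iota$ exactly, rather than some element of its $\mathrm{Aut}(X)$-orbit, using the finiteness of $O(\langle\xi\rangle^{\perp})$ together with Minkowski's lemma, and uniqueness via the fineness of $M_{2d,n}$ --- is a genuine refinement that mere equality of moduli points does not give. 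But as a proof of the theorem the argument cannot stand, because its key input is itself a consequence of that theorem.
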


The injectivity of the $p$-adic period map can thus be interpreted as a geometric version of the crystalline Torelli theorem of Ogus.

\section{The image of the period map}

We recall the computation of the image of the complex period map for polarized K3 surfaces as in (\cite{K3global}, Remark 6.3.7) and compute the image of the $p$-adic period map in a similar fashion.

\subsection{The image of the complex period map}

We remind the reader of the computation of the image of the complex period map for polarized K3 surfaces.
Relevant references are \cite{Kulikov} and \cite{K3global}.
Recall that in Section \ref{modspace} we defined a moduli space $M^*_{2d,n,\mathbb{C}}$ of quasi-polarized K3 surfaces of degree $2d$ with level-$n$ structure.
There is a natural extension $$\Phi^*: M^*_{2d,n,\mathbb{C}} \to \mathcal{G}_n(\mathbb{Z}) \backslash \mathcal{D}$$ of the complex period map $\Phi$ to $M^*_{2d,n,\mathbb{C}}$, which is shown to be surjective in \cite{Kulikov}. Note however that contrary to $\Phi$, the extended period map $\Phi^*$ is not an open immersion anymore.
Kulikov`s result shows that in order to compute the image of $M_{2d,n,\mathbb{C}}$
it is enough to establish a Hodge theoretic criterion to distinguish between polarizations and quasi-polarizations of complex K3 surfaces, and use it to exhibit which points of $\mathcal{G}_n(\mathbb{Z})\backslash \mathcal{D}$ correspond to polarized as opposed to merely quasi-polarized K3 surfaces.

The starting point will be the following Proposition from \cite{K3global}.
\begin{prop}\label{criterionampleness}
Let $(X, \xi)$ be a quasi-polarized K3 surface over an algebraically closed field $F$. Then $\xi$ is ample (and hence a polarization) if and only if $\langle \xi, \gamma\rangle \not=0$ for any class $\gamma \in NS(X)$ with $\gamma^2=-2$.
\end{prop}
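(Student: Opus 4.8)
The plan is to prove the criterion by reducing everything to the Riemann--Roch theorem and the structure of the nef and ample cones of a K3 surface. The key point is that a big and nef line bundle $\xi$ on a K3 surface fails to be ample precisely when it lies on the boundary of the ample cone, which in turn is cut out by the $(-2)$-curves.

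First I would establish the easy direction. Suppose $\xi$ is ample. If there were a class $\gamma \in NS(X)$ with $\gamma^2 = -2$ and $\langle \xi, \gamma \rangle = 0$, then by Riemann--Roch on the K3 surface $X$ we have $\chi(\mathcal{O}_X(\gamma)) = 2 + \tfrac12 \gamma^2 = 1$, so either $\gamma$ or $-\gamma$ is effective; say $\gamma$ is represented by an effective divisor $D$ (after possibly replacing $\gamma$ by $-\gamma$). Then $\langle \xi, D \rangle > 0$ since $\xi$ is ample and $D$ is a nonzero effective divisor, contradicting $\langle \xi, \gamma \rangle = 0$. This gives the ``only if'' part.

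For the converse, assume $\xi$ is a quasi-polarization (big and nef) and $\langle \xi, \gamma \rangle \neq 0$ for all $\gamma$ with $\gamma^2 = -2$. I would invoke the Nakai--Moishezon criterion: to show $\xi$ is ample it suffices to check $\langle \xi, C \rangle > 0$ for every irreducible curve $C \subset X$ and $\xi^2 > 0$; the latter holds since $\xi$ is big. Now let $C$ be an irreducible curve. By adjunction on a K3 surface, $C^2 = 2p_a(C) - 2 \geq -2$. If $C^2 \geq 0$, then since $\xi$ is nef and big the Hodge index theorem (or a direct positivity argument) forces $\langle \xi, C \rangle > 0$; the only way $\langle \xi, C \rangle = 0$ with $\xi$ nef and big and $C$ effective is if $C^2 < 0$. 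So the only possible obstruction comes from curves with $C^2 = -2$, i.e.\ smooth rational curves. But for such a $C$ the class $\gamma = [C]$ satisfies $\gamma^2 = -2$, hence by hypothesis $\langle \xi, C \rangle \neq 0$, and since $\xi$ is nef this means $\langle \xi, C \rangle > 0$. Thus $\langle \xi, C \rangle > 0$ for all irreducible curves $C$, and Nakai--Moishezon gives that $\xi$ is ample.

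The main obstacle, and the step requiring the most care, is the dichotomy for irreducible curves $C$ with $C^2 \geq 0$: one must argue that a nef and big $\xi$ cannot have $\langle \xi, C \rangle = 0$. I would handle this via the Hodge index theorem: the intersection form on $NS(X)$ has signature $(1, \rho-1)$, and $\xi$ spans a positive line since $\xi^2 > 0$; if $\langle \xi, C\rangle = 0$ then $C$ lies in the negative definite orthogonal complement $\langle \xi \rangle^\perp$, forcing $C^2 < 0$, contradicting $C^2 \geq 0$ unless $C = 0$. One should also double-check the edge case where $C^2 = 0$ but $C$ is a multiple fiber or similar; here nefness plus bigness still rules out $\langle \xi, C\rangle = 0$ by the same Hodge index argument. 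Since this Proposition is quoted from \cite{K3global}, in the write-up I would simply cite the relevant statement there (Proposition 8.1.3 or thereabouts) rather than reproduce the full Nakai--Moishezon argument, but the sketch above is the structure of the proof.
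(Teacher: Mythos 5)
Your proposal is correct and follows essentially the same route as the paper: the ``only if'' direction via Riemann--Roch producing an effective $(-2)$-class orthogonal to $\xi$, and the converse via the fact that a big and nef class on a K3 surface is ample unless it is orthogonal to a $(-2)$-curve, which the paper simply cites as (\cite{K3global}, Corollary 8.1.7) and which you unpack into the standard Nakai--Moishezon plus Hodge-index argument. All the ingredients you use (Riemann--Roch, adjunction, Hodge index, Nakai--Moishezon) are valid over an algebraically closed field of any characteristic, so the argument goes through as written.
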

\begin{proof}

Suppose that $\langle \xi, \gamma \rangle=0$ for a class with $\gamma^2=-2$. By Riemann-Roch, $\gamma$ is of the form $\pm [C]$ for a $(-2)$-curve $C$ on $X$. Since $\langle [C], \xi \rangle = 0$, this shows that $\xi$ cannot be ample.
Conversely, if $\xi$ is a quasi-polarization, i.e. big and nef, then $\xi$ is ample unless there exists a $(-2)$-curve $C$ on $X$ with $\langle [C], \xi \rangle =0$, see (\cite{K3global}, Corollary 8.1.7).
\end{proof}

For a complex K3 surface $X$ the Lefschetz (1,1)-theorem asserts that $$NS(X) = H_B^2(X,\mathbb{Z}) \cap H^{1,1}(X) = H_B^2(X,\mathbb{Z}) \cap H^{2,0}(X)^{\perp}. $$
In view of Proposition \ref{criterionampleness} this gives a purely Hodge theoretic way to distinguish between polarized and quasi-polarized K3 surfaces: a quasi-polarization $\xi$ is a polarization if and only if there does not exist a $\gamma \in H^2(X, \mathbb{Z})$ of square $-2$ such that $\langle \gamma, c_B(\xi) \rangle=0$ and $ H^{2,0}(X) \perp \gamma$.
Given an isometry $\iota: \Lambda \overset{\sim}{\rightarrow} H_B^2(X, \mathbb{Z})$ mapping $\lambda$ to $c_B(\xi)$, it follows that $\xi$ is a polarization if and only if the corresponding point in $\mathcal{D}$ does not lie in $\delta^{\perp} \subset \mathbb{P}(\Lambda_{d, \mathbb{C}})$ for any $\delta \in \Delta(\Lambda_d)$,
where $\Delta(\Lambda_d) = \{ \delta \in \Lambda_d \, | \, \delta^2=-2 \}$ denotes the set of roots of the lattice $\Lambda_d$.

Define $\mathcal{D}^{\circ} \subset \mathcal{D}$ as the complement of $\bigcup_{\delta \in \Delta(\Lambda_d)} \delta^{\perp}$.
The above discussion shows the following uniformization of the moduli space of polarized K3 surfaces.
\begin{thm}[\cite{K3global}, Remark 6.3.7] \label{imagecomplex}
The period map $\Phi$ gives a uniformization $$ M_{2d,n,\mathbb{C}} \overset{\sim}{\rightarrow} \mathcal{G}_n(\mathbb{Z}) \backslash \mathcal{D}^{\circ}.$$
\end{thm}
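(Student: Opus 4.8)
The plan is to identify the image of the open immersion $\Phi$ on $\mathbb{C}$-points with the open subset $\mathcal{G}_n(\mathbb{Z})\backslash\mathcal{D}^{\circ}$, using three ingredients already available: the ampleness criterion of Proposition \ref{criterionampleness}, the Lefschetz $(1,1)$-theorem $NS(X)=H^2_B(X,\mathbb{Z})\cap H^{2,0}(X)^{\perp}$, and the surjectivity of the extended period map $\Phi^{*}\colon M^{*}_{2d,n,\mathbb{C}}\to\mathcal{G}_n(\mathbb{Z})\backslash\mathcal{D}$ on the moduli space of quasi-polarized K3 surfaces (Kulikov). First I would record that $\mathcal{D}^{\circ}$ is stable under $\mathcal{G}_n(\mathbb{Z})$, indeed under all of $\mathcal{G}(\mathbb{Z})$: an isometry of $\Lambda$ fixing $\lambda$ preserves $\Lambda_d$, hence permutes the set of roots $\Delta(\Lambda_d)$ and the hyperplanes $\delta^{\perp}$, hence preserves their union and its complement in $\mathcal{D}$. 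Consequently $\mathcal{G}_n(\mathbb{Z})\backslash\mathcal{D}^{\circ}$ is a well-defined open subset of $Sh_{n,\mathbb{C}}=\mathcal{G}_n(\mathbb{Z})\backslash\mathcal{D}$, and for a point of $M^{*}_{2d,n,\mathbb{C}}$ the condition that its period lies over $\mathcal{D}^{\circ}$ is independent of the choice of marking.

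The heart of the argument is the pointwise equivalence: for $x\in M^{*}_{2d,n,\mathbb{C}}(\mathbb{C})$ corresponding to a quasi-polarized K3 surface $(X,\xi)$, one has $\Phi^{*}(x)\in\mathcal{G}_n(\mathbb{Z})\backslash\mathcal{D}^{\circ}$ if and only if $\xi$ is ample. To see this I would choose a marking $\iota\colon\Lambda\xrightarrow{\sim}H^2_B(X,\mathbb{Z})$ with $\iota(\lambda)=c_B(\xi)$, so that $\Phi^{*}(x)$ is represented by the line $\iota^{-1}(H^{2,0}(X))\subset\Lambda_d\otimes\mathbb{C}$; unwinding definitions, this line lies in some $\delta^{\perp}$ with $\delta\in\Delta(\Lambda_d)$ exactly when the class $\gamma:=\iota(\delta)\in H^2_B(X,\mathbb{Z})$ satisfies $\gamma^2=-2$, $\langle\gamma,c_B(\xi)\rangle=0$, and $\gamma\perp H^{2,0}(X)$. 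By the Lefschetz $(1,1)$-theorem the last two conditions say precisely that $\gamma$ is a $(-2)$-class in $NS(X)$ with $\langle\gamma,\xi\rangle=0$, and by Proposition \ref{criterionampleness} the existence of such a class is exactly the obstruction to $\xi$ being ample.

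With this equivalence in hand, and recalling that $M_{2d,n,\mathbb{C}}\subset M^{*}_{2d,n,\mathbb{C}}$ is the open sublocus where the quasi-polarization is ample (equivalently, a polarization), I would assemble the two inclusions. If $x\in M_{2d,n,\mathbb{C}}(\mathbb{C})$, then $\xi$ is ample, so $\Phi(x)=\Phi^{*}(x)\in\mathcal{G}_n(\mathbb{Z})\backslash\mathcal{D}^{\circ}$; hence $\Phi$ maps $M_{2d,n,\mathbb{C}}$ into $\mathcal{G}_n(\mathbb{Z})\backslash\mathcal{D}^{\circ}$. Conversely, given $\bar p\in\mathcal{G}_n(\mathbb{Z})\backslash\mathcal{D}^{\circ}$, Kulikov's surjectivity of $\Phi^{*}$ produces $x\in M^{*}_{2d,n,\mathbb{C}}(\mathbb{C})$ with $\Phi^{*}(x)=\bar p$, whose quasi-polarization is then forced by the equivalence to be ample, so that $x\in M_{2d,n,\mathbb{C}}(\mathbb{C})$ and $\Phi(x)=\bar p$. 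Thus $\Phi$ is bijective on $\mathbb{C}$-points onto $\mathcal{G}_n(\mathbb{Z})\backslash\mathcal{D}^{\circ}$; since $\Phi$ is an open immersion of complex varieties (the complex Torelli theorem together with Borel's algebraicity result), it therefore restricts to an isomorphism of $M_{2d,n,\mathbb{C}}$ onto the open subvariety of $Sh_{n,\mathbb{C}}$ whose analytification is $\mathcal{G}_n(\mathbb{Z})\backslash\mathcal{D}^{\circ}$.

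The one genuinely heavy external input is Kulikov's surjectivity of $\Phi^{*}$, which rests on the theory of degenerations of K3 surfaces; I would take it as given. Within the argument itself the only delicate point is the interaction of markings with the level-$n$ structure, i.e. checking that ``$\Phi^{*}(x)$ lies over $\mathcal{D}^{\circ}$'' is a well-posed condition on the double quotient, and this is precisely what the $\mathcal{G}(\mathbb{Z})$-invariance of $\mathcal{D}^{\circ}$ noted at the outset takes care of. Everything else is a routine translation through Proposition \ref{criterionampleness} and the Lefschetz $(1,1)$-theorem.
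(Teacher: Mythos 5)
Your proposal is correct and follows essentially the same route as the paper: Kulikov's surjectivity of the extended period map $\Phi^{*}$ on the quasi-polarized moduli space, combined with Proposition \ref{criterionampleness} and the Lefschetz $(1,1)$-theorem to translate ampleness of the quasi-polarization into the condition that the period point avoids the hyperplanes $\delta^{\perp}$ for $\delta\in\Delta(\Lambda_d)$. The only addition is your explicit check that $\mathcal{D}^{\circ}$ is $\mathcal{G}(\mathbb{Z})$-stable, which the paper leaves implicit.
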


We remark that this gives an explicit description of the moduli space $M_{2d,n,\mathbb{C}} $ of polarized complex K3 surfaces of degree $2d$ purely in terms of linear algebra.
Note that $\mathcal{D}^{\circ}$ can be viewed as a moduli space of polarized K3 surfaces $(X, \xi)$ together with a marking $\iota: \Lambda \overset{\sim}{\rightarrow} H_B^2(X, \mathbb{Z})$ mapping $\lambda$ to $c_B(\xi)$.

\subsection{The image of the $p$-adic period map}

In this section we compute the image of the $p$-adic period map

\begin{equation}\label{periodmap2} \overline{\varphi}: \overline{M}_{2d,ss,n}^{perf} \hookrightarrow I_{\phi}(\mathbb{Q}) \backslash X_{\mu}^G(b) \times G(\mathbb{A}_f^p)/K_n^p. \end{equation}

\begin{mydef} The set of roots of $N_{d, \mathbb{Q}}$ is the set $\Delta(N_{d, \mathbb{Q}}):= \{\delta \in N_{d, \mathbb{Q}} | \delta^2 = -2 \}$.
\end{mydef}
The action of $I_{\phi}(\mathbb{Q})$ on $N_{d, \mathbb{Q}}$ preserves the subset $\Delta(N_{d, \mathbb{Q}})$.

For every $\delta \in \Delta(N_{d, \mathbb{Q}})$ we
define a subset
$C_{\delta}$ of $X_{\mu}^G(b)(\bar{\mathbb{F}}_p) \times G(\mathbb{A}_f^p)$ as the set of pairs $(x,g)$ such that $x^{-1}(\delta) \in \Lambda_{d,W}$ and $g^{-1}(\delta) \in \Lambda_{d, \hat{\mathbb{Z}}^p}$.
Here we view $\delta \in N_{d, \mathbb{Q}}$ as an element of $\Lambda_{d, \mathbb{A}_f^p}$ and $\Lambda_{\breve{\mathbb{Q}}_p}$ via the identifications in (\ref{padicidentification}).

Clearly, this set is stable under the action of $K_n^p$.
Under the left action of $q \in I_{\phi}(\mathbb{Q})$, the subset $C_{\delta}$ is mapped to $C_{q^{-1}\delta}$.
Hence we can look at the double quotient \begin{equation} \label{C} C:= I_{\phi}(\mathbb{Q}) \backslash \bigcup_{\delta \in \Delta(N_{d, \mathbb{Q}})} C_{\delta} /K_n^p, \end{equation} which as we will now see is the set of $\bar{\mathbb{F}}_p$-points of a closed subscheme of $$I_{\phi}(\mathbb{Q}) \backslash X_{\mu}^G(b) \times G(\mathbb{A}_f^p)/K_n^p.$$

\begin{thm}\label{impadicper}
Suppose $p > 18d+4$.
The set of $\bar{\mathbb{F}}_p$-points of the image of the period map (\ref{periodmap2}) is the complement of the subset $C$.
\end{thm}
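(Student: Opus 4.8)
The strategy mirrors the computation of the image of the complex period map recalled before Theorem \ref{imagecomplex}: one combines the Hodge-theoretic ampleness criterion of Proposition \ref{criterionampleness} with an appropriate surjectivity statement, after translating the condition ``$\xi$ is ample'' into the double coset $I_\phi(\mathbb{Q})\backslash X_\mu^G(b)\times G(\mathbb{A}_f^p)/K_n^p$ by means of the integrality criterion of Proposition \ref{NSintegral}. Thus the three ingredients are: (a) Proposition \ref{criterionampleness}; (b) Proposition \ref{NSintegral}; and (c) the fact that every $\bar{\mathbb{F}}_p$-point of $\overline{\mathscr{S}}_n^{b,perf}=I_\phi(\mathbb{Q})\backslash X_\mu^G(b)\times G(\mathbb{A}_f^p)/K_n^p$ is the image of some quasi-polarized (not necessarily polarized) supersingular K3 surface of degree $2d$ with level-$n$ structure, which is the positive-characteristic analogue of the surjectivity of the complex period map on quasi-polarized surfaces (\cite{Kulikov}).

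The key is the following dictionary. Let $(X,\xi,\eta\mathcal{O}_n(\hat{\mathbb{Z}}^p))$ be a quasi-polarized supersingular K3 surface with level structure, and let $(x,g)\in X_\mu^G(b)(\bar{\mathbb{F}}_p)\times G(\mathbb{A}_f^p)$ represent its image under the period map, built exactly as in Section \ref{defpadicper} (which extends verbatim to quasi-polarized surfaces) from an isometry $\rho\colon NS(X)_{\mathbb{Q}}\overset{\sim}{\to}N_{\mathbb{Q}}$ sending $\xi$ to $\lambda$, an isometry $h\colon\Lambda_W\overset{\sim}{\to}H^2_{cris}(X/W)$ sending $\lambda$ to $c_{cris}(\xi)$, and $\eta$. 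For $\delta\in\Delta(N_{d,\mathbb{Q}})$ put $\gamma:=\rho^{-1}(\delta)\in N_{d,\mathbb{Q}}\subset NS(X)_{\mathbb{Q}}$; then $\gamma^2=\delta^2=-2$ and $\langle\gamma,\xi\rangle=\langle\delta,\lambda\rangle=0$. Unwinding the definitions of $x$ and $g$, the condition $x^{-1}(\delta)\in\Lambda_{d,W}$ says, via $h$, that the crystalline realization of $\gamma$ is integral, while $g^{-1}(\delta)\in\Lambda_{d,\hat{\mathbb{Z}}^p}$ says, via $\eta$, that the $\ell$-adic realizations of $\gamma$ are integral; by Proposition \ref{NSintegral} this is equivalent to $\gamma\in NS(X)$. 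Conversely, any class in $NS(X)$ of square $-2$ orthogonal to $\xi$ has the form $\rho^{-1}(\delta)$ for a unique $\delta\in\Delta(N_{d,\mathbb{Q}})$ with $(x,g)\in C_\delta$. Hence $(x,g)$ lies in $C$ if and only if $NS(X)$ contains a class of square $-2$ orthogonal to $\xi$; since $C$ and the period point both descend to the double coset, this does not depend on the auxiliary choices.

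Granting (a)--(c), the theorem follows. If $(x,g)\in\mathrm{im}(\overline{\varphi})$, it is the image of a polarized surface $(X,\xi)$; as $\xi$ is ample, Proposition \ref{criterionampleness} forbids a class of square $-2$ in $NS(X)$ orthogonal to $\xi$, so $(x,g)\notin C$ by the dictionary. Conversely, if $(x,g)\notin C$, then by (c) it is the image of a quasi-polarized supersingular K3 surface $(X,\xi)$; the dictionary shows $NS(X)$ has no class of square $-2$ orthogonal to $\xi$, so $\xi$ is ample by Proposition \ref{criterionampleness}, i.e. $(X,\xi)$ is polarized and $(x,g)\in\mathrm{im}(\overline{\varphi})$. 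Therefore the set of $\bar{\mathbb{F}}_p$-points of $\mathrm{im}(\overline{\varphi})$ is exactly the complement of $C$.

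The real work is in (c). One obtains it from the surjectivity part of Ogus's crystalline Torelli theory for supersingular K3 surfaces \cite{Ogus}: every $\bar{\mathbb{F}}_p$-point of $\overline{\mathscr{S}}_n^{b,perf}$ comes from a supersingular K3 surface $X$ carrying a numerical class $e\in NS(X)$ with $e^2=2d$ together with the prescribed crystalline and prime-to-$p$ data. Replacing $e$ by $-e$ if needed we may assume $e$ lies in the connected component of the positive cone containing the ample classes, and then (when the point is not in $C$, so that $e$ lies off every $(-2)$-wall by the dictionary) we may move $e$ by an element $w$ of the Weyl group of $NS(X)$ into the interior of the nef cone, i.e. make $w(e)$ ample. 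A direct check --- a Weyl group element is an isometry of $NS(X)$ and, by (\ref{generatescoho}), (\ref{generatescoho2}), preserves the integral lattices $H^2_{\acute{e}t}(X,\mathbb{Z}_\ell)$ for $\ell\neq p$ and $H^2_{cris}(X/W)$, so it can be absorbed into the choices of $\rho$, $h$, $\eta$ --- shows that passing from $e$ to $w(e)$ leaves the point of the double coset unchanged, so that $(X,w(e))$ is a polarized surface still mapping to the given $(x,g)$. Making this rigorous requires identifying the basic locus of the $\mathrm{GSpin}$ Shimura variety with the appropriate level-$n$ cover of Ogus's period space; this is where the Kuga--Satake constructions of Theorem \ref{Madapusi-Pera} and the identification $I_\phi\cong SO(N_{d,\mathbb{Q}})$ of Proposition \ref{isogenygroup} enter, and where the hypothesis $p>18d+4$ is used --- via the results of \cite{Isogenies} --- to guarantee that these constructions behave as stated for every point of the basic locus. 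The determinant bookkeeping for $\rho$, $h$ and $\eta$ is handled as in Section \ref{defpadicper}, using that $n$ is a prime power so that the relevant orthogonal groups contain reflections of determinant $-1$.
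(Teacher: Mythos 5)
Your proof follows the paper's argument exactly: surjectivity of the period map on the quasi-polarized moduli space, the dictionary $(x,g)\in C_{\delta}\Leftrightarrow \rho^{-1}(\delta)\in NS(X)$ established via Proposition \ref{NSintegral}, and the ampleness criterion of Proposition \ref{criterionampleness}. The only divergence is ingredient (c): the paper simply cites (\cite{Matsumoto}, Theorem 4.1) for the surjectivity onto quasi-polarized surfaces --- which is precisely where the hypothesis $p>18d+4$ enters, rather than through \cite{Isogenies} as you suggest --- whereas you sketch a re-derivation from Ogus's crystalline Torelli theory, which is essentially how that cited result is proved.
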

\begin{proof}
By (\cite{Matsumoto}, Theorem 4.1) the period map
$$ \overline{\mathcal{M}}^{*, perf}_{2d,ss,n} \to I_{\phi}(\mathbb{Q}) \backslash X_{\mu}^G(b) \times G(\mathbb{A}_f^p)/K_n^p $$ is surjective provided that $p >18d+4$.
Consequently, given a point $[x,g]$ of the right hand side, there exists a quasi-polarized K3 surface $(X, \xi, \eta \mathcal{O}_n(\hat{\mathbb{Z}}^p))$ over $\bar{\mathbb{F}}_p$ mapping to $[x,g]$. We need to show that $\xi$ is in fact a polarization if and only if $[x,g] \notin C$.

By construction, after choosing suitable isometries $\rho: NS(X)_{\mathbb{Q}} \overset{\sim}{\to} N_{\mathbb{Q}}$ and $h: \Lambda_W \overset{\sim}{\to} H^2_{cris}(X/W)$, the pair $[x,g]$ is given by $x= \rho_{\breve{\mathbb{Q}}_p} \circ h_{\breve{\mathbb{Q}}_p}$ and $g=\rho_{\mathbb{A}_f^p} \circ \eta_{\mathbb{A}_f^p} $.
\begin{claim}
With this choice of $\rho$, the pair $(x,g) \in X_{\mu}^G(b)(\bar{\mathbb{F}}_p) \times G(\mathbb{A}_f^p) $ lies in $C_{\delta}$ if and only if $\rho^{-1}(\delta) \in NS(X)$.
\end{claim}
\begin{proof}[Proof of Claim]
It is obvious that $\rho^{-1}(\delta) \in NS(X)_{\mathbb{Q}}$.
The pair $(x,g)$ belongs to $C_{\delta}$ if and only if the defining conditions $x^{-1}(\delta) \in \Lambda_{d,W}$ and $g^{-1}(\delta) \in \Lambda_{d,\hat{\mathbb{Z}}^p}$ hold, which translate to
$$ \rho_{\breve{\mathbb{Q}}_p}^{-1}(\delta) \in P_{cris}^2(X/W) \subset P_{cris}^2(X/W)[\frac{1}{p}],$$
$$ \rho_{\mathbb{A}_f^p}^{-1}(\delta) \in P_{\acute{e}t}^2(X, \hat{\mathbb{Z}}^p) \subset P_{\acute{e}t}^2(X, \mathbb{A}_f^p).$$
It follows from Proposition \ref{NSintegral} that this is the case if and only if $\rho^{-1}(\delta) \in NS(X)$.
\end{proof}
Since $\gamma= \rho^{-1}(\delta) \in NS(X)_{\mathbb{Q}}$ is a class with $\gamma^2=-2$ and $\langle \gamma, \xi \rangle =0$, the Theorem follows from Proposition \ref{criterionampleness}.
\end{proof}

Recall that $$I_{\phi}(\mathbb{Q}) \backslash X_{\mu}^G(b) \times G(\mathbb{A}_f^p)/K_n^p = \coprod_g \Gamma_g \backslash X_{\mu}^G(b),$$
where $g$ runs through a set of representatives of the double quotient $I_{\phi}(\mathbb{Q}) \backslash G(\mathbb{A}_f^p)/K_n^p$.

We reformulate the description of the image of the $p$-adic period map as an open subset of this disjoint union.

\begin{mydef}\label{defZdelta}
For $\delta \in \Delta(N_{d, \mathbb{Q}})$, let $Z_{\delta} \subset X_{\mu}^G(b)(\bar{\mathbb{F}}_p)$ be the subset consisting of those $ x \in G(\breve{\mathbb{Q}}_p)$ such that $x^{-1}(\delta) \in \Lambda_{d,W}$.
\end{mydef}

Denote by $$N_{d,g} := N_{d, \mathbb{Q}} \cap g(\Lambda_{d,\hat{\mathbb{Z}}^p}),$$ a $\mathbb{Z}[\frac{1}{p}]$-lattice which does not depend on the choice of the representative $g$ of an element of $I_{\phi}(\mathbb{Q}) \backslash G(\mathbb{A}_f^p)/K_n^p$.
Define $X^{\circ}_g(\bar{\mathbb{F}}_p)$ to be the subset $$X^{\circ}_g(\bar{\mathbb{F}}_p) = X_{\mu}^G(b)(\bar{\mathbb{F}}_p) \setminus \bigcup_{\delta \in \Delta(N_{d,g})} Z_{\delta},$$
where $\Delta(N_{d,g}) := \Delta(N_{d, \mathbb{Q}}) \cap N_{d,g}$.

We can identify the complement of $C$ with the disjoint union $$ \coprod_g \Gamma_g \backslash X^{\circ}_g(\bar{\mathbb{F}}_p).$$
Note that Theorem \ref{impadicper} shows that each term in this disjoint union is the set of $\bar{\mathbb{F}}_p$-points of an open subscheme $\Gamma_g \backslash X^{\circ}_g \subset \Gamma_g \backslash X_{\mu}^G(b) $, and therefore also $X^{\circ}_g \subset X_{\mu}^G(b) $ is an open subscheme.

\begin{cor}\label{unifordisj}
Suppose $p > 18d+4$.
The period map induces an isomorphism
$$\overline{M}_{2d,ss,n}^{perf} \cong \coprod_g \Gamma_g \backslash X^{\circ}_g.$$
\end{cor}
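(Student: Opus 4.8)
The plan is to upgrade the point-set statement of Theorem \ref{impadicper} to an isomorphism of perfect schemes, using that the $p$-adic period map on the reduction is an open immersion. First I would record that $\overline{\varphi}$ is indeed an open immersion. By Proposition \ref{integralext} the period map $\varphi: M_{2d,n, \mathbb{Z}_{(p)}} \hookrightarrow \mathscr{S}_n$ is an open immersion; base-changing to $\bar{\mathbb{F}}_p$ and restricting to the preimage of the basic locus $\overline{\mathscr{S}}_n^b$, which is the supersingular locus $\overline{M}_{2d,ss,n}$ as recalled in Section \ref{defpadicper}, yields an open immersion $\overline{M}_{2d,ss,n} \hookrightarrow \overline{\mathscr{S}}_n^b$. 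Since the perfection functor $Y \mapsto Y^{perf} = \varprojlim_{\mathrm{Frob}} Y$ is a limit over Frobenius and Frobenius is the identity on underlying topological spaces, $Y^{perf} \to Y$ is a homeomorphism and the perfection functor carries open immersions to open immersions; composing the resulting open immersion $\overline{M}_{2d,ss,n}^{perf} \hookrightarrow \overline{\mathscr{S}}_n^{b,perf}$ with the Rapoport-Zink uniformization isomorphism of Theorem \ref{RZuniformization} shows that $$\overline{\varphi}: \overline{M}_{2d,ss,n}^{perf} \hookrightarrow \coprod_g \Gamma_g \backslash X_{\mu}^G(b)$$ is an open immersion.

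An open immersion identifies its source with the open subscheme of its target whose underlying space is the image, so $\overline{\varphi}$ gives an isomorphism of $\overline{M}_{2d,ss,n}^{perf}$ onto an open subscheme $U \subseteq \coprod_g \Gamma_g \backslash X_{\mu}^G(b)$. The next step is to pin down $U$. By Theorem \ref{impadicper} the set of $\bar{\mathbb{F}}_p$-points of the image of $\overline{\varphi}$ is the complement of $C$, and this complement was identified in the discussion preceding the corollary with $\coprod_g \Gamma_g \backslash X^{\circ}_g(\bar{\mathbb{F}}_p)$, which is precisely the set of $\bar{\mathbb{F}}_p$-points of the open subscheme $\coprod_g \Gamma_g \backslash X^{\circ}_g$. (Here one uses that $\Gamma_g$ acts on $X^{\circ}_g$, since it permutes the $Z_{\delta}$ for $\delta \in \Delta(N_{d,g})$, so that this quotient open subscheme makes sense.) Thus $U$ and $\coprod_g \Gamma_g \backslash X^{\circ}_g$ are two open subschemes of $\coprod_g \Gamma_g \backslash X_{\mu}^G(b)$ with the same set of $\bar{\mathbb{F}}_p$-points.

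Finally I would conclude that two such open subschemes coincide. The underlying topological space of $\coprod_g \Gamma_g \backslash X_{\mu}^G(b)$ agrees with that of the scheme $\overline{\mathscr{S}}_n^b$, which is of finite type over $\bar{\mathbb{F}}_p$; hence it is Jacobson, its closed points are exactly its $\bar{\mathbb{F}}_p$-points, and an open subset is determined by the set of closed points it contains. Therefore $U = \coprod_g \Gamma_g \backslash X^{\circ}_g$ as open subschemes, and $\overline{\varphi}$ induces the desired isomorphism $\overline{M}_{2d,ss,n}^{perf} \cong \coprod_g \Gamma_g \backslash X^{\circ}_g$. The only substantive input is Theorem \ref{impadicper}, already established; the residual obstacle is the bookkeeping above, whose one delicate point is checking that the perfect schemes in play are topologically of finite type over $\bar{\mathbb{F}}_p$, so that agreement on $\bar{\mathbb{F}}_p$-points forces agreement as open subschemes.
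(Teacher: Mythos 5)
Your proposal is correct and follows essentially the same route as the paper, which states the corollary as an immediate consequence of Theorem \ref{impadicper} together with the observation that $\overline{\varphi}$ is an open immersion (so that the complement of $C$ is the set of $\bar{\mathbb{F}}_p$-points of an open subscheme). The details you supply — perfection preserving open immersions and being a homeomorphism on underlying spaces, and the Jacobson-ness argument identifying an open subscheme from its closed points — are exactly the standard bookkeeping the paper leaves implicit.
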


\begin{rem}
The above discussion shows that $X^{\circ}_g(\bar{\mathbb{F}}_p)$ can be naturally identified with the set of polarized K3 surfaces $(X, \xi)$ of degree $2d$ over $\bar{\mathbb{F}}_p$ together with an isometry $\iota: NS(X)[\frac{1}{p}] \overset{\sim}{\rightarrow} N_{g}$ mapping $\xi$ to $\lambda$, where $N_g = N_{\mathbb{Q}} \cap g(\Lambda_{\hat{\mathbb{Z}}^p})$.
\end{rem}

\subsection{$p$-adic uniformization of the K3 moduli space}

In this section we come to the rigid analytic uniformization of the generic fiber of $\breve{M}_{2d,ss,n,W}$.
This amounts to computing the image of the period map \begin{equation}\label{padicperiodmap} M_{2d,n, \breve{\mathbb{Q}}_p}^{ss}:=(\breve{M}_{2d,ss,n,W})_{\eta}^{rig} \hookrightarrow \coprod_g \Gamma_g \backslash \mathcal{M}. \end{equation}

Recall that there is a continuous specialization map
$sp: \left|\mathcal{M}\right| \to \left|X_{\mu}^G(b)\right|$ from the local Shimura variety to the affine Deligne-Lusztig variety.
We define $\mathcal{M}_g^{\circ} := sp^{-1}(X^{\circ}_g)$ to be the tube over the open subset $X^{\circ}_g$, which is a quasi-compact open rigid subvariety of $\mathcal{M}$.

\begin{thm}\label{padicuniformization}
Suppose $p > 18d+4$.
The period map (\ref{padicperiodmap}) induces a uniformization
\begin{equation}\label{equationpadicuniformization} M_{2d,n, \breve{\mathbb{Q}}_p}^{ss} \cong \coprod_g \Gamma_g \backslash \mathcal{M}^{\circ}_g \end{equation}
of rigid spaces over $\breve{\mathbb{Q}}_p$.
\end{thm}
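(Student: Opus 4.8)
The plan is to deduce the rigid-analytic uniformization directly from the statement on perfect schemes in Corollary \ref{unifordisj}, using that $M_{2d,n,\breve{\mathbb{Q}}_p}^{ss}$ is by construction the tube over the supersingular locus and that the specialization map is compatible with the Rapoport--Zink uniformization.

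First I would pin down the relation between $\breve{M}_{2d,ss,n,W}$ and the completion $\widehat{{\mathscr{S}_{n,W}}_{/\overline{\mathscr{S}}_{n}^b}}$ of the integral canonical model along its basic locus. Since $\varphi\colon M_{2d,n,W}\hookrightarrow\mathscr{S}_{n,W}$ is an open immersion and $\overline{M}_{2d,ss,n}=\varphi^{-1}(\overline{\mathscr{S}}_n^b)$, formal completion commutes with restriction to this open subscheme, so the open immersion of formal schemes $\breve{\varphi}\colon\breve{M}_{2d,ss,n,W}\hookrightarrow\widehat{{\mathscr{S}_{n,W}}_{/\overline{\mathscr{S}}_{n}^b}}$ identifies the source with the open formal subscheme supported on the open subset $\overline{M}_{2d,ss,n}$ of the special fibre $\overline{\mathscr{S}}_n^b$. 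Both formal schemes are locally formally of finite type over $\mathrm{Spf\,}W$, so passing to rigid generic fibres and using the standard fact that the generic fibre of an open formal subscheme is the preimage under specialization of the corresponding open subset of the special fibre, I obtain that the open immersion $\varphi^{rig}\colon M_{2d,n,\breve{\mathbb{Q}}_p}^{ss}\hookrightarrow Sh_n^b$ identifies $M_{2d,n,\breve{\mathbb{Q}}_p}^{ss}$ with $sp^{-1}(|\overline{M}_{2d,ss,n}|)$, where $sp\colon|Sh_n^b|\to|\overline{\mathscr{S}}_n^b|$ is the specialization map of $\widehat{{\mathscr{S}_{n,W}}_{/\overline{\mathscr{S}}_{n}^b}}$.

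Next I would transport this through the Rapoport--Zink uniformization of Theorem \ref{RZuniformization}. The uniformization is constructed in \cite{Xu} at the level of Shen's formal model $\breve{\mathcal{M}}$, which simultaneously has rigid generic fibre $\mathcal{M}$ and perfected special fibre $X_{\mu}^G(b)$, with $sp\colon|\mathcal{M}|\to|X_{\mu}^G(b)|$ the associated specialization map. Consequently the isomorphisms $Sh_n^b\cong\coprod_g\Gamma_g\backslash\mathcal{M}$ and $\overline{\mathscr{S}}_n^{b,perf}\cong\coprod_g\Gamma_g\backslash X_{\mu}^G(b)$ are compatible with the respective specialization maps, the latter obtained by descending the $J_b(\mathbb{Q}_p)$-equivariant map $sp$ along the discrete action of $\Gamma_g$. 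Under the identification of the special fibre, Corollary \ref{unifordisj} says that the open subset $\overline{M}_{2d,ss,n}$ goes to $\coprod_g\Gamma_g\backslash X^{\circ}_g$, so combining with the previous paragraph, $M_{2d,n,\breve{\mathbb{Q}}_p}^{ss}$ corresponds inside $\coprod_g\Gamma_g\backslash\mathcal{M}$ to
$$sp^{-1}\Bigl(\coprod_g\Gamma_g\backslash X^{\circ}_g\Bigr)=\coprod_g\Gamma_g\backslash sp^{-1}(X^{\circ}_g)=\coprod_g\Gamma_g\backslash\mathcal{M}^{\circ}_g,$$
using that $sp$ is $\Gamma_g$-equivariant so that taking preimages commutes with the quotient, together with the definition $\mathcal{M}^{\circ}_g=sp^{-1}(X^{\circ}_g)$. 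This is the asserted uniformization.

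The step I expect to be the main point is the compatibility, asserted in the third paragraph, of the two Rapoport--Zink uniformization isomorphisms (on the rigid generic fibre and on the perfection of the special fibre) with the specialization maps; but this is inherent to the construction in \cite{Xu}, where the uniformization is obtained at the level of the formal scheme $\widehat{{\mathscr{S}_{n,W}}_{/\overline{\mathscr{S}}_{n}^b}}$ via $\breve{\mathcal{M}}$ (compare the commuting diagram relating the GSpin and orthogonal uniformizations). The remaining inputs, that formal completion commutes with passage to an open subscheme and that rigid generic fibres of open formal subschemes are tubes, are standard, so I do not expect any serious obstacle beyond bookkeeping.
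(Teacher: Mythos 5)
Your proposal is correct and follows essentially the same route as the paper: identify $M_{2d,n,\breve{\mathbb{Q}}_p}^{ss}$ with $sp^{-1}(|\overline{M}_{2d,ss,n}|)$ since $\breve{M}_{2d,ss,n,W}$ is an open formal subscheme of the completion along the basic locus, invoke the compatibility of the Rapoport--Zink uniformization with the specialization maps, and then apply Corollary \ref{unifordisj} to pass to the open subsets $X^{\circ}_g$ and their tubes $\mathcal{M}^{\circ}_g$. The only difference is that you spell out the compatibility via Shen's formal model $\breve{\mathcal{M}}$ in more detail than the paper, which simply asserts the identification of topological spaces.
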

\begin{proof}
We also denote by $sp: \left| Sh^b\right| \to \left| \overline{\mathscr{S}}_n^b\right|$ be the specialization map for the formal scheme $\widehat{ {\mathscr{S}_{n,W}}_{/\overline{\mathscr{S}}_n^b}}$ over $\mathrm{Spf \,} W$.
Since $\breve{M}_{2d,ss,n,W}$ is a formal open subscheme, we have
$$ (\breve{M}_{2d,ss,n,W})_{\eta}^{rig} = sp^{-1}(\left|\overline{M}_{2d,ss,n}\right|).$$
The Rapoport-Zink uniformization of the basic stratum in the special fiber of the Shimura variety $\mathscr{S}_{n,W}$ induces an identification of topological spaces
$$ \left| \overline{\mathscr{S}}_n^b\right| = \left| \coprod_g \Gamma_g \backslash X_{\mu}^G(b) \right|$$
and Corollary \ref{unifordisj} shows that the period map identifies the open subspaces $$\left| \overline{M}_{2d,ss,n}\right| = \left| \coprod_g \Gamma_g \backslash X^{\circ}_g \right|.$$
We conclude that
$$ (\breve{M}_{2d,ss,n,W})_{\eta}^{rig} = sp^{-1}(\left| \coprod_g \Gamma_g \backslash X^{\circ}_g \right|) = \coprod_g \Gamma_g \backslash \mathcal{M}^{\circ}_g.$$
\end{proof}

\begin{rem}
Without giving an explicit description of the uniformizing space $\mathcal{N}$, it is claimed in (\cite{Xu}, Corollary 8.12) that there is a uniformization
$$ M_{2d,n, \breve{\mathbb{Q}}_p}^{ss} \cong \coprod_g \Gamma_g \backslash \mathcal{N}.$$
While one could expect the existence of such a uniformization from the case of Shimura varieties, it turns out to be the case that for the moduli space of polarized K3 surfaces, in the disjoint union (\ref{equationpadicuniformization}) the uniformizing domain $\mathcal{M}^{\circ}_g$ varies with $g$.
Furthermore, it is claimed in (\cite{Xu}, 8.5) that $\mathcal{N}$ is stable under the action of the $p$-adic Lie group $J_b(\mathbb{Q}_p)$. Instead, we actually see that the spaces $\mathcal{M}_{g}^{\circ}$ are only stable under the action of the discrete subgroup $\Gamma_g \subset J_b(\mathbb{Q}_p)$.
This is analogous to the observation that the complex uniformizing domain $\mathcal{D}^{\circ}$ is only stable under the discrete group $\mathcal{G}(\mathbb{Z}) \subset G(\mathbb{R})$, and not under the full real Lie group $G(\mathbb{R})$.
\end{rem}

\subsection{Special subvarieties and the image of the period map}\label{spimage}

We give a slightly different viewpoint on the results of the previous sections by interpre\-ting the image of the K3 period map as the complement of a union of special subvarieties. This is in line with (\cite{Achter}, Remark 5.4).

For $\delta \in \Delta(\Lambda_d)$, denote by $G_{\delta} = \{g \in G \, | \, g \delta = \delta\}$ the subgroup of $G$ fixing $\delta$.
Then $(G_{\delta}, \delta^{\perp}\cap \mathcal{D})$ is a sub-Shimura datum and we denote by \begin{equation}\label{ci} Sh_{\delta,n, \mathbb{Q}} \to Sh_{n, \mathbb{Q}}\end{equation} the induced morphism of Shimura varieties. For $n$ large enough, this morphism is a closed immersion, see (\cite{DeligneShimura}, Proposition 1.15), and thus we can interpret $Sh_{\delta,n, \mathbb{Q}}$ as a closed subvariety of $Sh_{n ,\mathbb{Q}}$ which only depends on the class of $\delta$ in $\Delta(\Lambda_d) / \mathcal{G}_n(\mathbb{Z})$.
Theorem \ref{imagecomplex} shows that the image of the K3 period map is precisely the complement of the union of the closed subvarieties $Sh_{\delta,n,\mathbb{Q}}$ as $\delta$ runs through a set of representatives for the (finite) set $\Delta(\Lambda_d) / \mathcal{G}_n(\mathbb{Z})$.

We want to use the theory of integral canonical models of Shimura varieties to extend this picture over $\mathbb{Z}[\frac{1}{N}]$ for $N$ large enough.
Namely, for almost all primes $p$, the group $K_{\delta,p}:= \mathcal{G}(\mathbb{Z}_p) \cap G_{\delta}(\mathbb{Q}_p)$ is a hyperspecial subgroup, so that there is an integral canonical model $\mathscr{S}_{\delta,n,\mathbb{Z}[\frac{1}{N}]}$ such that the closed immersion (\ref{ci}) extends to a closed immersion
$$\mathscr{S}_{\delta,n,\mathbb{Z}[\frac{1}{N}]} \hookrightarrow \mathscr{S}_{n, \mathbb{Z}[\frac{1}{N}]}$$ for $N$ sufficiently large.
\begin{prop}
For sufficiently large $N$, the image of the open immersion $\varphi: M_{2d,n, \mathbb{Z}[\frac{1}{N}]} \hookrightarrow \mathscr{S}_{n, \mathbb{Z}[\frac{1}{N}]}$ is the complement of the union of the closed subschemes $\mathscr{S}_{\delta,n,\mathbb{Z}[\frac{1}{N}]}$ as $\delta $ runs through the finite set $\Delta(\Lambda_d) / \mathcal{G}_{n}(\mathbb{Z})$.
\end{prop}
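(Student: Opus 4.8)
The plan is to compare the two open subschemes $V:=\varphi\bigl(M_{2d,n,\mathbb{Z}[\frac{1}{N}]}\bigr)$ and $U:=\mathscr{S}_{n,\mathbb{Z}[\frac{1}{N}]}\setminus\bigcup_{\delta}\mathscr{S}_{\delta,n,\mathbb{Z}[\frac{1}{N}]}$ of the smooth $\mathbb{Z}[\frac{1}{N}]$-scheme $\mathscr{S}_{n,\mathbb{Z}[\frac{1}{N}]}$, where the union runs over the finite set $\Delta(\Lambda_d)/\mathcal{G}_n(\mathbb{Z})$, so that $U$ is open. Since $\mathscr{S}_{n,\mathbb{Z}[\frac{1}{N}]}$ is of finite type over $\mathbb{Z}[\frac{1}{N}]$, hence Jacobson, it is enough to show that $U$ and $V$ have the same closed points; for this I would compare geometric points over $\bar{\mathbb{Q}}$ and over $\bar{\mathbb{F}}_\ell$ for the finitely many primes $\ell$ (all coprime to $N$) that arise. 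Over $\bar{\mathbb{Q}}$ the equality $U_{\bar{\mathbb{Q}}}=V_{\bar{\mathbb{Q}}}$ is Theorem \ref{imagecomplex} together with the descent of $\varphi$ to $\mathbb{Q}$ and the fact that, complex-analytically, $Sh_{\delta,n}$ is $\mathcal{G}_n(\mathbb{Z})\backslash(\delta^{\perp}\cap\mathcal{D})$, so only the fibres over the closed points of $\mathrm{Spec}\,\mathbb{Z}[\frac{1}{N}]$ remain.

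For the inclusion $V\subseteq U$ I would observe that, for each $\delta$, the open subscheme $V\cap\mathscr{S}_{\delta,n,\mathbb{Z}[\frac{1}{N}]}$ of $\mathscr{S}_{\delta,n,\mathbb{Z}[\frac{1}{N}]}$ has empty generic fibre by the case over $\mathbb{Q}$, and is therefore empty: after enlarging $N$, $\mathscr{S}_{\delta,n,\mathbb{Z}[\frac{1}{N}]}$ is the integral canonical model of the orthogonal Shimura datum $(G_\delta,\delta^{\perp}\cap\mathcal{D})$, which has reflex field $\mathbb{Q}$ and is hence smooth over $\mathbb{Z}[\frac{1}{N}]$; being regular, each of its irreducible components is open and, being flat over the base, dominates $\mathrm{Spec}\,\mathbb{Z}[\frac{1}{N}]$, so every nonempty open subscheme contains the generic point of a component and meets the generic fibre. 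Hence $V\cap\mathscr{S}_{\delta,n,\mathbb{Z}[\frac{1}{N}]}=\emptyset$ for all $\delta$, i.e. $V\subseteq U$.

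The substantive part is the reverse inclusion in positive characteristic. Fixing $\ell\nmid N$ and enlarging $N$ if needed, I would use surjectivity of the quasi-polarized period map $\varphi^{*}\colon M^{*}_{2d,n,\bar{\mathbb{F}}_\ell}\to\mathscr{S}_{n,\bar{\mathbb{F}}_\ell}$, which holds for all $\ell$ outside a finite set (cf. \cite{Matsumoto}, Theorem 4.1, for the supersingular stratum, and \cite{MP} for the remaining strata). Given $s\in U(\bar{\mathbb{F}}_\ell)$, pick a quasi-polarized K3 surface $(X,\xi)$ with level structure over $\bar{\mathbb{F}}_\ell$ mapping to $s$, with associated Kuga-Satake abelian variety $A$ and its space of special endomorphisms $L(A)\subset\mathrm{End}(A)$. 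The crucial point will be that, away from a finite set of primes, $\mathscr{S}_{\delta,n}$ is exactly the locus whose Kuga-Satake abelian variety admits a special endomorphism with cohomological realizations in the $\mathcal{G}_n(\mathbb{Z})$-orbit of $\delta$ — the theory of special cycles on $\mathrm{GSpin}$ Shimura varieties in the integral form due to Madapusi-Pera. Via the isometry $\langle\xi\rangle^{\perp}\subset NS(X)\cong L(A)$ of Theorem \ref{Madapusi-Pera}(\ref{NSKS}), the hypothesis $s\notin\bigcup_\delta\mathscr{S}_{\delta,n}$ then says that $NS(X)$ contains no class $\gamma$ with $\gamma^2=-2$ and $\langle\gamma,\xi\rangle=0$; by Proposition \ref{criterionampleness} the quasi-polarization $\xi$ is ample, so $(X,\xi)$ defines a point of $M_{2d,n,\bar{\mathbb{F}}_\ell}$ over $s$ and $s\in V$. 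With $V\subseteq U$ this gives $U=V$.

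I expect the main obstacle to be exactly the integral description of the special subvarieties used above: one must know that $\mathscr{S}_{\delta,n}$ is cut out, away from finitely many primes, by the existence of the corresponding $(-2)$ special endomorphism of the Kuga-Satake abelian variety, and that $\varphi^{*}$ is surjective in every residue characteristic $\ell\nmid N$. Both are available after inverting a suitable finite $N$, which can be taken large enough simultaneously to kill the finitely many primes that are bad for hyperspeciality of the $K_{\delta,p}$, for smoothness of the $\mathscr{S}_{\delta,n}$, and for the extension of the closed immersion (\ref{ci}); here it is essential that $\Delta(\Lambda_d)/\mathcal{G}_n(\mathbb{Z})$ is finite.
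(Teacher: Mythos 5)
Your argument reaches the correct conclusion, but by a substantially longer route than the paper. The paper's proof is a two-line flatness argument: for $N$ large, both $\mathscr{S}_{n, \mathbb{Z}[\frac{1}{N}]} \setminus M_{2d,n, \mathbb{Z}[\frac{1}{N}]}$ and $\bigcup_{\delta} \mathscr{S}_{\delta,n,\mathbb{Z}[\frac{1}{N}]}$ are closed subschemes that are \emph{flat} over $\mathbb{Z}[\frac{1}{N}]$, and a flat closed subscheme over a Dedekind base is the closure of its generic fibre, so identifying the two generic fibres via Theorem \ref{imagecomplex} already forces the two closed subschemes to coincide. Your inclusion $V\subseteq U$ is exactly one half of this flatness argument (applied to $\bigcup_{\delta}\mathscr{S}_{\delta,n}$ only); for the reverse inclusion you replace the flatness of $\mathscr{S}_{n}\setminus M_{2d,n}$ by a pointwise argument in each residue characteristic, using surjectivity of the quasi-polarized period map mod $\ell$, the identification $\langle\xi\rangle^{\perp}\cong L(A)$ from Theorem \ref{Madapusi-Pera}, the integral theory of special cycles (in particular the lifting of a $(-2)$ special endomorphism to characteristic zero via \cite{MP2}, Corollary 8.15), and Proposition \ref{criterionampleness}. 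This is essentially the content of the Claim the paper proves immediately after the proposition for the basic stratum, transported to all Newton strata. What your route buys is an explicit identification of the geometric inputs that the paper's flatness assertion leaves implicit; what it costs is that you need surjectivity of $\varphi^{*}$ on the \emph{entire} special fibre and not just on the supersingular stratum covered by (\cite{Matsumoto}, Theorem 4.1) --- for the non-basic strata this rests on a potential-good-reduction argument and deserves a more precise citation than the one you give. One small slip: there are infinitely many primes $\ell\nmid N$, not finitely many as you write; this does not affect the argument, which is uniform in $\ell$ once $N$ is chosen large enough.
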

\begin{proof}
For $N$ sufficiently large, both $\mathscr{S}_{n, \mathbb{Z}[\frac{1}{N}]} \setminus M_{2d,n, \mathbb{Z}[\frac{1}{N}]}$ and $\bigcup_{\delta} \mathscr{S}_{\delta,n,\mathbb{Z}[\frac{1}{N}]} $ are closed subschemes which are flat over $\mathbb{Z}[\frac{1}{N}]$.
The proposition then follows from the fact that their generic fibers can be identified using Theorem \ref{imagecomplex}.
\end{proof}

Rapoport-Zink uniformization for the Shimura variety $\mathscr{S}_{\delta,n,\mathbb{Z}[\frac{1}{N}]}$ gives rise to a commutative diagram
\begin{equation} \label{RZdiagram} \xymatrix{ \overline{\mathscr{S}}_{\delta,n}^{b,perf} \ar@{^{(}->}[r] \ar[d]^{\cong} & \overline{\mathscr{S}}_{n}^{b,perf} \ar[d]^{\cong}\\
I_{\phi_{\delta}}(\mathbb{Q})\backslash X_{\delta} \times G_{\delta}(\mathbb{A}_f^p)/K_{\delta}^p \ar@{^{(}->}[r] & I_{\phi}(\mathbb{Q})\backslash X \times G(\mathbb{A}_f^p)/K_n^p.
}\end{equation}

\begin{claim}
The union of the images of the lower map in (\ref{RZdiagram}) as $\delta$ runs through $\Delta(\Lambda_d) / \mathcal{G}_n(\mathbb{Z})$ has as $\bar{\mathbb{F}}_p$-points precisely the closed subset $$ C:= I_{\phi}(\mathbb{Q}) \backslash \bigcup_{\overline{\delta} \in \Delta(N_{d, \mathbb{Q}})} C_{\overline{\delta}} /K_n^p$$
as defined in (\ref{C}).
\end{claim}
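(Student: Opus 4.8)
The plan is to deduce the claim from the previous Proposition — which identifies the complement of $M_{2d,n}$ in $\mathscr{S}_n$ with $\bigcup_\delta \mathscr{S}_{\delta,n}$ over $\mathbb{Z}[\frac{1}{N}]$ — combined with the Rapoport--Zink uniformisation encoded in the diagram (\ref{RZdiagram}) and with Theorem \ref{impadicper}. Throughout we keep the running hypothesis $p > 18d+4$ and take $N$ large enough (in particular $p \nmid N$) so that the integral canonical models $\mathscr{S}_{\delta,n,\mathbb{Z}[\frac{1}{N}]}$ and the flatness statement of the previous Proposition are available.

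First I would restrict the equality of subsets $\bigcup_\delta \mathscr{S}_{\delta,n,\mathbb{Z}[\frac{1}{N}]} = \mathscr{S}_{n,\mathbb{Z}[\frac{1}{N}]} \setminus M_{2d,n,\mathbb{Z}[\frac{1}{N}]}$ to the special fibre at $p$ and intersect it with the basic locus $\overline{\mathscr{S}}_n^b$. Using that the integral period map $\varphi$ pulls $\overline{\mathscr{S}}_n^b$ back to the supersingular locus $\overline{M}_{2d,ss,n}$ (cf. \cite{MP}, Lemma 4.9, recalled in Section \ref{defpadicper}), this yields
$$ \bigcup_\delta \bigl( \overline{\mathscr{S}}_{\delta,n} \cap \overline{\mathscr{S}}_n^b \bigr) \;=\; \overline{\mathscr{S}}_n^b \setminus \overline{M}_{2d,ss,n} $$
as subsets of $\overline{\mathscr{S}}_n^b(\bar{\mathbb{F}}_p)$. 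One then has to check that $\overline{\mathscr{S}}_{\delta,n} \cap \overline{\mathscr{S}}_n^b = \overline{\mathscr{S}}_{\delta,n}^b$, i.e. that a point of $\overline{\mathscr{S}}_{\delta,n}$ is basic for the sub-datum $(G_\delta, \delta^\perp \cap \mathcal{D})$ precisely when it is basic for $(G, \mathcal{D})$. This is built into the top arrow of (\ref{RZdiagram}), the restriction of the closed immersion $\overline{\mathscr{S}}_{\delta,n} \hookrightarrow \overline{\mathscr{S}}_n$ to the two basic loci; it can also be seen directly, since on $\overline{\mathscr{S}}_{\delta,n}$ the class $\delta$ is algebraic, so $H^2_{cris}(X/W)(1)$ is, as an isocrystal, the orthogonal direct sum of the span of the two slope-zero classes $\lambda$ and $\delta$ with $\delta^\perp \cap P^2_{cris}(X/W)(1)$ (the $G_\delta$-isocrystal), and the latter is isoclinic of slope $0$ if and only if all of $H^2_{cris}(X/W)(1)$ is.

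Finally I would pass to perfections and transport this identity through the Rapoport--Zink isomorphisms. Perfection does not change the underlying set of $\bar{\mathbb{F}}_p$-points, and by commutativity of (\ref{RZdiagram}) the image of the lower arrow for a fixed $\delta$ is the image of $\overline{\mathscr{S}}_{\delta,n}^{b,perf}$ under the isomorphism $\overline{\mathscr{S}}_n^{b,perf} \overset{\sim}{\rightarrow} I_\phi(\mathbb{Q}) \backslash X \times G(\mathbb{A}_f^p)/K_n^p$. Taking the union over $\delta$, this is the image of $\overline{\mathscr{S}}_n^{b,perf} \setminus \overline{M}_{2d,ss,n}^{perf}$, hence the complement of the image of the period map $\overline{\varphi}$, which by Theorem \ref{impadicper} is exactly the subset $C$ of (\ref{C}); this proves the claim. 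I expect the only genuine content here — the rest being bookkeeping, given the previous Proposition and (\ref{RZdiagram}) — to be the compatibility of the basic loci just discussed, together with the verification (already packaged into Theorem \ref{impadicper}, via the flatness of the cycles $\mathscr{S}_{\delta,n}$ and Proposition \ref{NSintegral}) that a supersingular point of $\overline{\mathscr{S}}_{\delta,n}$ corresponds to a quasi-polarised K3 surface carrying the extra $(-2)$-class in $\langle \xi \rangle^\perp \cap NS(X)$ prescribed by $\delta$. Alternatively, one can avoid invoking the flatness Proposition: for a point of $\overline{\mathscr{S}}_n^b$ represented by $(X, \xi, \eta)$ with an isometry $\rho : NS(X)_{\mathbb{Q}} \overset{\sim}{\rightarrow} N_{\mathbb{Q}}$ as in Section \ref{defpadicper}, lying on $\overline{\mathscr{S}}_{\delta,n}^{b,perf}$ is equivalent, by Theorem \ref{Madapusi-Pera}(\ref{NSKS}) and Proposition \ref{NSintegral}, to $(x,g) \in C_{\overline{\delta}}$ with $\overline{\delta} := \rho(\gamma) \in \Delta(N_{d,\mathbb{Q}})$ for the class $\gamma \in \langle\xi\rangle^\perp \cap NS(X)$ realising $\delta$ — this is the inner Claim in the proof of Theorem \ref{impadicper} — so that running $\delta$ over $\Delta(\Lambda_d)/\mathcal{G}_n(\mathbb{Z})$ and $\overline{\delta}$ over $\Delta(N_{d,\mathbb{Q}})$ matches the two unions directly.
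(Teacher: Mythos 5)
Your first argument is correct, and it reaches the Claim by a genuinely different route from the paper. The paper proves the Claim directly: for each $\delta$ it traces the generic Hodge cycle on $Sh_{\delta,n}$ through the Kuga--Satake construction to a special endomorphism of the abelian variety $\mathcal{A}_{\tilde{y}}$, transports it via a $GSpin$-isogeny to a root $\overline{\delta} \in \Delta(N_{d,\mathbb{Q}})$ to get the inclusion of the image in $C_{\overline{\delta}}$, and for the converse invokes the deformation-theoretic lifting result (\cite{MP2}, Corollary 8.15) to lift a point of $C_{\overline{\delta}}$ to characteristic $0$ and land it on $Sh_{\delta,n,\mathbb{Q}}$. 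You instead deduce the set-theoretic identity formally from results already in hand: the preceding Proposition (over $\mathbb{Z}[\frac{1}{N}]$ the union $\bigcup_{\delta}\mathscr{S}_{\delta,n}$ is the complement of $M_{2d,n}$), Theorem \ref{impadicper} (the image of $\overline{\varphi}$ is the complement of $C$), the fact that $\varphi$ pulls the basic locus back to the supersingular locus, and the compatibility $\overline{\mathscr{S}}_{\delta,n}\cap\overline{\mathscr{S}}_n^{b}=\overline{\mathscr{S}}_{\delta,n}^{b}$, which you correctly reduce to the observation that splitting off the slope-zero line spanned by (the crystalline realization of) $\delta$ does not affect whether the isocrystal is isoclinic of slope zero, and that for both $G$ and $G_{\delta}$ basic means Newton point zero. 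This is shorter and non-circular, since both inputs are established before the Claim. What it costs relative to the paper's proof: (i) it inherits the hypothesis $p>18d+4$ from Theorem \ref{impadicper} via Matsumoto's surjectivity, which the direct argument does not need; (ii) it only identifies the total unions and says nothing about which $C_{\overline{\delta}}$'s constitute the image of a fixed $Sh_{\delta,n}$-uniformization; and (iii) it does not transfer to the cubic fourfold case, where the proof of Theorem \ref{cfuniformization} relies precisely on the direct special-subvariety argument because no independent analogue of Theorem \ref{impadicper} is available there. Two small imprecisions worth fixing: on $\overline{\mathscr{S}}_{\delta,n}$ the class $\delta$ is not ``algebraic'' on a K3 surface (points of $\mathscr{S}_{\delta,n}$ need not lie in $M_{2d,n}$); it is a special endomorphism of the Kuga--Satake abelian scheme, whence its crystalline realization is Frobenius-invariant, which is all your slope argument needs. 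And your second, ``alternative'' sketch is essentially the paper's argument in disguise: the asserted equivalence between lying on $\overline{\mathscr{S}}_{\delta,n}^{b,perf}$ and membership in $C_{\overline{\delta}}$ hides the nontrivial converse direction, which is exactly where the paper needs (\cite{MP2}, Corollary 8.15).
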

\begin{proof}[Proof of Claim]
The element $\delta \in \Lambda_d$ gives rise to a generic Hodge cycle over $Sh_{\delta,n,\mathbb{C}}$, i.e. a global section $\delta \in H^0(Sh_{\delta,n, \mathbb{C}},\mathbb{V}_{| Sh_{\delta,n, \mathbb{C}}})$ which is a Hodge cycle at every point.
Denoting by $\widetilde{Sh}_{\delta,n, \mathbb{C}}$ the preimage of $Sh_{\delta,n, \mathbb{C}}$ in $\widetilde{Sh}_{n, \mathbb{C}}$,
the element $\delta$ corresponds to a generic Hodge cycle in $\tilde{\mathbb{V}}^{\otimes (1,1)}$, and thus to a section $\delta \in \underline{\mathrm{End}}(\mathfrak{A})(\widetilde{Sh}_{\delta,n, \mathbb{Q}})$ over $\widetilde{Sh}_{\delta,n, \mathbb{Q}} $ of the endomorphisms of the universal abelian scheme $f: \mathfrak{A} \to Sh_{n,\mathbb{Q}}$.
By a Néron model property, it extends to a section $\delta \in \underline{\mathrm{End}}(\mathcal{A})(\widetilde{\mathscr{S}}_{\delta,n})$ over $\widetilde{\mathscr{S}}_{\delta,n} $ of the endomorphisms of the universal abelian scheme $\mathcal{A} \to \widetilde{\mathscr{S}}_{n}$.
Here $\widetilde{\mathscr{S}}_{\delta,n}$ denotes the preimage of $\mathscr{S}_{\delta,n}$ in $\widetilde{\mathscr{S}}_{n}$.
Let $y \in \overline{\mathscr{S}}_{\delta,n}^{b}(\bar{\mathbb{F}}_p)$ with a lift $\tilde{y} \in \widetilde{\mathscr{S}_{n,} }_{\bar{\mathbb{F}}_p}^{\tilde{b}}(\bar{\mathbb{F}}_p)$ and denote by $\mathcal{A}_{\tilde{y}}$ the abelian variety over $\bar{\mathbb{F}}_p$ which is the fiber of the universal abelian scheme $\mathcal{A}$ over $\tilde{y}$. Then $\delta_{\tilde{y}} \in \mathrm{End}(\mathcal{A}_{\tilde{y}})$ is a special endomorphism, i.e. its cohomological realizations lie in the distinguished subspaces $L_{\ell,\tilde{y}} \subset \mathrm{End}(H_{\acute{e}t}^1(\mathcal{A}_{\tilde{y}}, \mathbb{Z}_{\ell}))$ respectively $L_{cris,\tilde{y}} \subset \mathrm{End}(H_{cris}^1(\mathcal{A}_{\tilde{y}}/W))$. Denoting by $\mathbb{A}$ a Kuga-Satake abelian variety of the auxiliary polarized supersingular K3 surface $(\mathbb{X}, \mathbb{L})$ as in Section \ref{defpadicper}, we can choose a $GSpin$-isogeny between $\mathbb{A}$ and $\mathcal{A}_{\tilde{y}}$.
Under the $GSpin$-isogeny, $\delta_{\tilde{y}}$ corresponds to an element $\bar{\delta} \in L(\mathbb{A})_{\mathbb{Q}} = N_{d, \mathbb{Q}}$.
The image of $X_{\delta} \times G_{\delta}(\mathbb{A}_f^p) \to X \times G(\mathbb{A}_f^p)$ is given by the set of pairs $(x,g)$ such that $x^{-1} (\overline{\delta}) \in \mathcal{G}(W).\delta \subset \Lambda_{d,W}$ and $g^{-1}(\overline{\delta}) = \delta \in \Lambda_{d, \hat{\mathbb{Z}}^p}$.
We conclude that this image lies in $C_{\overline{\delta}}$, as claimed.

Conversely, if $y \in \overline{\mathscr{S}}_{n}^{b}(\bar{\mathbb{F}}_p)$ is a point that lies in the image of $C_{\overline{\delta}}$ in $$I_{\phi}(\mathbb{Q}) \backslash X \times G(\mathbb{A}_f^p)/K_n^p,$$
there is a lift $\tilde{y} \in \widetilde{\mathscr{S}_{n,} }_{\bar{\mathbb{F}}_p}^{\tilde{b}}(\bar{\mathbb{F}}_p)$ and a $GSpin$-isogeny $\rho$ between $\mathcal{A}_{\tilde{y}}$ and $\mathbb{A}$ such that $\rho^{-1}(\overline{\delta})$ lies in $L(\mathcal{A}_{\tilde{y}}) \subset \mathrm{End}(\mathcal{A}_{\tilde{y}})$. It is shown in (\cite{MP2}, Corollary 8.15) that the deformation space of $y$ together with the special endomorphism $\rho^{-1}(\overline{\delta})$ admits a flat component, and thus there is a field $F$ of characteristic $0$ and a point $Y \in Sh_{n, \mathbb{Q}}(F)$ lifting $y$ such that the special endomorphism $\rho^{-1}(\overline{\delta})$ lifts to $\delta \in L(\mathcal{A}_{Y})$, where $\mathcal{A}_Y$ is the Kuga-Satake abelian variety at $Y$.
For any embedding $\sigma: F \hookrightarrow \mathbb{C}$, the special endomorphism $\delta$ gives rise to a Hodge class $\delta \in P_B^2(\mathcal{A}_{\sigma(Y)}, \mathbb{Z}) \cong \Lambda_{d}$ with $\delta^2=-2$, and by construction $\sigma(Y) \in Sh_{\delta, n, \mathbb{Q}}(\mathbb{C})$.
As $Sh_{\delta,n,\mathbb{Q}}$ is defined over $\mathbb{Q}$, we conclude that $Y \in Sh_{\delta,n,\mathbb{Q}}(F)$, and thus $y \in \overline{\mathscr{S}}_{\delta, n}^{b}(\bar{\mathbb{F}}_p)$, since $ \mathscr{S}_{\delta, n}$ is the closure of $Sh_{\delta,n,\mathbb{Q}}$ in $\mathscr{S}_{n}$ (for $N$ large enough).
\end{proof}

\section{Cubic fourfolds}

We briefly sketch how the same ideas can be applied to obtain a Rapoport-Zink type uniformization for the moduli space of smooth cubic fourfolds with supersingular reduction.

Similarly as above, one can introduce a moduli space $M_{cf, n, \mathbb{Z}[\frac{1}{2}]}$ with level structure, cf. (\cite{MP}, 5.13).
The lattice $\Lambda$ is now the integral cohomology lattice of a cubic fourfold, which is a self-dual lattice of signature $(21,2)$. Let $\lambda \in \Lambda$ denote the square of the class of a hyperplane section, so that $\lambda^2=3$, and define $\Lambda_0 := \langle \lambda \rangle^{\perp} \subset \Lambda$ to be the orthogonal complement. For a precise description of these lattices, we refer the reader to (\cite{Hassett}, Proposition 2.1.2).
For $n \ge 3$ there is a complex analytic period map
$$\Phi: M_{cf,n, \mathbb{C}} \to \mathcal{G}_n(\mathbb{Z})\backslash \mathcal{D},$$
where
$$\mathcal{D} = \{x \in \mathbb{P}(\Lambda_0 \otimes \mathbb{C}) | \langle x,x \rangle =0, \langle x,\overline{x}\rangle < 0 \} \subset \mathbb{P}(\Lambda_0 \otimes \mathbb{C})$$ and $\mathcal{G}_n(\mathbb{Z})$ is the subgroup of $\mathcal{G}(\mathbb{Z}) := \{ g \in SO(\Lambda) \, | \, g \lambda = \lambda\} $ of elements that reduce to the identity modulo $n$.
By Voisin's Torelli theorem for cubic fourfolds \cite{VoisinTorelli}, this period map is an open immersion.
Again, for $n\ge 3$ the target of the period map is a Shimura variety $Sh_{n, \mathbb{C}}$ attached to the group $G:= SO(\Lambda_{0, \mathbb{Q}}) $ of signature $(20,2)$. By an argument of André (cf. \cite{AndreHK}, §6) the period map descents to a morphism of algebraic varieties $\Phi_{\mathbb{Q}}: M_{cf,n,\mathbb{Q}} \to Sh_{n, \mathbb{Q}}$.

Furthermore, assuming that $n$ is a power of a prime number and using again the theory of integral canonical models, we get an integral extension
$$\varphi: \mathcal{M}_{cf,n,\mathbb{Z}_{(p)}} \hookrightarrow \mathscr{S}_{n, \mathbb{Z}_{(p)}}$$
which is an open immersion for $n$ large enough, cf. (\cite{MP}, 5.13). We assume everywhere that $p >3$ and $p \nmid n$.
Rapoport-Zink uniformization provides us with a $p$-adic period map for the tube over the supersingular locus
$$ M_{cf, n, \breve{\mathbb{Q}}_p}^{ss}:= (\breve{M}_{cf, ss,n,W})_{\eta}^{rig} \hookrightarrow I_{\phi}(\mathbb{Q})\backslash \mathcal{M} \times G(\mathbb{A}_f^p)/K_n^p $$
The arguments in the proof of Proposition \ref{isogenygroup} can be adapted, replacing Theorem \ref{Madapusi-Pera} by its corresponding analog (\cite{MP}, Theorem 5.14(2)), to show that $I_{\phi} = SO(CH^2_{0,\mathbb{Q}})$.
Here $\mathbb{X}$ is a supersingular cubic fourfold over $\bar{\mathbb{F}}_p$, and $$CH^2_{0, \mathbb{Q}} := \langle h^2\rangle^{\perp} \subset CH^2_{\mathbb{Q}}:=CH^2(\mathbb{X})_{\mathbb{Q}}$$ is the complement of the square of a hyperplane section.

\subsection{The image of the period map: special cubic fourfolds}

As conjectured by Hassett in (\cite{Hassett}, §4.3) and proved by Laza and Loojenga in \cite{Laza} and \cite{Looijenga}, the image of the period map can be explicitly described as the complement of an arithmetic arrangement of hyperplanes, corresponding to the Hodge structures of so-called special cubic fourfolds of discriminant $2$ and $6$.

To define special cubic fourfolds, we start with a saturated, positive definite sub-lattice $K \subset \Lambda$ of rank two containing $\lambda$. We will write $K_0:= K\cap \Lambda_0$.
We let $$\mathcal{D}_K = \{ x \in \mathcal{D} \, | \, K_0 \subset x^{\perp}\} \subset \mathcal{D},$$
this is a hyperplane in $\mathcal{D}$, cf. (\cite{Hassett}, §3.1).
Denote by $\mathcal{C}_K$ the image of $\mathcal{D}_K$ in $\mathcal{G}_n(\mathbb{Z}) \backslash \mathcal{D}$, which clearly only depends on the orbit of $K$ under $\mathcal{G}_n(\mathbb{Z})$. Hassett shows in (\cite{Hassett}, Proposition 3.2.4) that the orbit of $K$ (and thus also $\mathcal{C}_K$) only depends on the discriminant of the lattice $K$.
We may thus write $\mathcal{C}_d$ for the divisor $\mathcal{C}_K$ with $K \subset \Lambda$ any saturated positive definite rank two lattice of discriminant $d$.
We denote by $\mathcal{D}_d$ the union of all $\mathcal{D}_K$, where $K$ runs through the saturated positive definite rank two lattices of discriminant $d$, and thus $\mathcal{C}_d = \mathcal{G}_n(\mathbb{Z}) \backslash \mathcal{D}_d $.

Set $\mathcal{D}^{\circ}:= \mathcal{D} \setminus (\mathcal{D}_2 \cup \mathcal{D}_6)$.
Then the result on the image of the period map for cubic fourfolds reads as follows:

\begin{thm}[\cite{Laza}, Theorem 1.1; \cite{Looijenga}]
The period map induces an isomorphism $$M_{cf, n, \mathbb{C}} \overset{\sim}{\to} \mathcal{G}_n(\mathbb{Z}) \backslash \mathcal{D}^{\circ}.$$
\end{thm}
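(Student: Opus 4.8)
The plan is to combine Voisin's global Torelli theorem with a surjectivity statement for a suitably enlarged moduli problem, in close analogy with the treatment of the complex period map for polarized K3 surfaces recalled above. By \cite{VoisinTorelli} the period map $\Phi$ is an open immersion, so $M_{cf,n,\mathbb{C}}$ is identified with an algebraic open subvariety of the Shimura variety $\mathcal{G}_n(\mathbb{Z})\backslash\mathcal{D}$, and the whole content is to decide which $\mathcal{G}_n(\mathbb{Z})$-orbits in $\mathcal{D}$ are hit; equivalently, to show that a Hodge structure parametrized by $\mathcal{D}$ comes from a \emph{smooth} cubic fourfold precisely when it avoids $\mathcal{D}_2\cup\mathcal{D}_6$.

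First I would enlarge the moduli problem so that the period map becomes surjective over the non-cuspidal part of a compactification of $\mathcal{G}_n(\mathbb{Z})\backslash\mathcal{D}$. On the geometric side one passes from smooth cubic fourfolds to the GIT quotient $\mathbb{P}(\mathrm{Sym}^3(\mathbb{C}^6)^{\vee})/\!/SL_6$ --- equivalently one allows cubic fourfolds with at worst isolated rational double points together with the strictly semistable degenerations, the deepest of which is the chordal cubic, i.e.\ the secant variety $\mathrm{Sec}(v_2(\mathbb{P}^2))$ (the $3\times 3$ symmetric determinantal cubic). Here I would invoke Laza's analysis of this GIT quotient and its boundary \cite{Laza}, together with Looijenga's theory of modifications of Baily--Borel compactifications of locally symmetric varieties \cite{Looijenga}: the period map extends to a morphism from the GIT quotient to such a modification of the Baily--Borel compactification of $\mathcal{G}_n(\mathbb{Z})\backslash\mathcal{D}$, and over the non-cuspidal part this extension is surjective onto $\mathcal{G}_n(\mathbb{Z})\backslash\mathcal{D}$. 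In other words, every point of $\mathcal{D}$ arises as the (limiting) Hodge structure either of a smooth cubic fourfold, or of a cubic fourfold with a single node, or of a one-parameter smoothing of the chordal cubic; this surjectivity is the analog of Kulikov's theorem in the K3 case.

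It then remains to extract the Hodge-theoretic criterion that singles out the smooth locus, in the same spirit as Proposition \ref{criterionampleness} and the discussion following it. A cubic fourfold acquiring a single node has a vanishing cycle $\delta\in\Lambda_0$ with $\delta^2=2$; after a finite base change the limiting Hodge structure lies on $\delta^{\perp}$, the rank-two overlattice $\langle\lambda,\delta\rangle$ has discriminant $6$, and conversely a Hodge structure on $\Lambda_0$ lying on such a hyperplane deforms to a nodal but not to a smooth cubic. As $\delta$ runs over the roots of $\Lambda_0$ and one applies $\mathcal{G}_n(\mathbb{Z})$, these hyperplanes sweep out exactly $\mathcal{D}_6$. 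The chordal cubic contributes the remaining locus: its one-parameter smoothings carry limiting Hodge structures of degree-two K3 type, arising from the double cover of $\mathbb{P}^2$ branched along a plane sextic, and matching this family with the arrangement attached to the long roots of $\Lambda_0$ identifies its $\mathcal{G}_n(\mathbb{Z})$-sweep with $\mathcal{D}_2$. Combining the two descriptions, a point of $\mathcal{D}$ is the period point of a smooth cubic fourfold exactly when it lies in $\mathcal{D}^{\circ}=\mathcal{D}\setminus(\mathcal{D}_2\cup\mathcal{D}_6)$; since $\Phi$ is an open immersion, passing to the $\mathcal{G}_n(\mathbb{Z})$-quotient yields the isomorphism $M_{cf,n,\mathbb{C}}\xrightarrow{\sim}\mathcal{G}_n(\mathbb{Z})\backslash\mathcal{D}^{\circ}$.

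The main obstacle is exactly the material being imported: the surjectivity of the extended period map together with the precise identification of the two excluded divisors is the heart of the Laza--Looijenga theorem, and unlike the K3 case there is no short elementary surjectivity statement to fall back on. It rests on a delicate GIT stability analysis --- classifying the stable, the semistable, and the closed semistable orbits of cubic forms in six variables --- and on a careful study of the limiting mixed Hodge structures of the nodal and of the chordal degenerations, including matching the vanishing cohomology and the monodromy with the root and long-root systems of $\Lambda_0$ in the sense of \cite{Laza}. Once these inputs are granted, the reduction to ``avoid $\mathcal{D}_2\cup\mathcal{D}_6$'' is formally parallel to the argument for polarized K3 surfaces.
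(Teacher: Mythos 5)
This theorem is stated in the paper purely as a citation ([\cite{Laza}, Theorem 1.1; \cite{Looijenga}]) with no proof given, so there is no internal argument to compare against; your proposal is a faithful high-level account of how the cited result is actually established. The outline is accurate: Voisin's Torelli theorem reduces everything to identifying the image, the surjectivity comes from Laza's GIT analysis combined with Looijenga's modified Baily--Borel compactifications, the discriminant-$6$ divisor $\mathcal{D}_6$ is swept out by the $\delta^{\perp}$ for vanishing cycles of nodal degenerations (roots $\delta^2=2$, with $\langle\lambda,\delta\rangle$ of discriminant $6$ since $\lambda^2=3$), and $\mathcal{D}_2$ comes from the chordal (secant-of-Veronese) cubic and its degree-two K3 limits. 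You are also right, and commendably explicit, that the entire substance lives in the imported GIT stability classification and limiting mixed Hodge structure computations; nothing in your reduction to ``avoid $\mathcal{D}_2\cup\mathcal{D}_6$'' fills that in, but the paper treats the result as a black box in exactly the same way.
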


In other words, as predicted by Hassett, the period map misses precisely the hyperplane arrangements $\mathcal{D}_2$ and $\mathcal{D}_6$.

\begin{mydef}[\cite{Laza}, Definition 2.16] \textnormal{ }
\begin{enumerate}[(i)]
\item
An element $\delta \in \Lambda_0$ such that $\delta^2=2$ is called a \emph{root}. We denote by $\Delta(\Lambda_0)$ the set of roots of the lattice $\Lambda_0$.
\item
An element $\delta \in \Lambda_0$ such that $\delta^2 = 6$ and $\langle \delta, \Lambda_0 \rangle \equiv 0 \mod 3$ is called a \emph{long root}. Denote by $\Delta_{long}(\Lambda_0)$ the set of longs roots of $\Lambda_0$.
\end{enumerate}
\end{mydef}

By (\cite{Laza}, Proposition 2.15), we have
$$\mathcal{D}_6 = \bigcup_{\delta \in \Delta(\Lambda_0)} \delta^{\perp} \subset \mathcal{D},$$
and
$$ \mathcal{D}_2 = \bigcup_{\delta \in \Delta_{long}(\Lambda_0)} \delta^{\perp} \subset \mathcal{D}.$$

Note that again $\mathcal{G}_n(\mathbb{Z}) \backslash \mathcal{D}^{\circ}$ is the complement of a union of special subvarieties of $Sh_{n,\mathbb{C}}$ corresponding to the sub-Shimura data attached to the group $$G_{\delta} := \{ g \in G \, | \, g\delta=\delta\}$$ for an element $ \delta \in \Delta(\Lambda_0) $ or $\delta \in \Delta_{long}(\Lambda_0)$.

\subsection{The image of the $p$-adic period map}

As before, let $\mathbb{X}$ denote a supersingular smooth cubic fourfold over $\bar{\mathbb{F}}_p$, and choose isometries $H_{\acute{e}t}^4(\mathbb{X}, \hat{\mathbb{Z}}^p) \cong \Lambda_{\hat{\mathbb{Z}}^p} $ and $(H^4_{cris}(\mathbb{X}/W)[\frac{1}{p}])^{\varphi=p^2} \cong \Lambda_{\mathbb{Q}_p} $ mapping the Chern class of $h^2$ to $\lambda$.
For $g \in G(\mathbb{A}_f^p)$ we introduce the lattice $$CH^2_{0,g} := CH^2_{0,\mathbb{Q}} \cap g\Lambda_{0,\hat{\mathbb{Z}}^p},$$ a $\mathbb{Z}[\frac{1}{p}]$-lattice which depends only on the class of $g$ in the double coset $I_{\phi}(\mathbb{Q}) \backslash G(\mathbb{A}_f^p) / K_n^p$.

\begin{mydef} \textnormal{ }
\begin{enumerate}[(i)]
\item
An element $\delta \in CH^2_{0,g}$ such that $\delta^2=2$ is called a \emph{root}. We denote by $\Delta(CH^2_{0,g})$ the set of roots of the lattice $CH^2_{0,g}$.
\item
An element $\delta \in CH^2_{0,g}$ such that $\delta^2 = 6$ and $\langle g^{-1} \delta, \Lambda_{0, \hat{\mathbb{Z}}^p} \rangle \equiv 0 \mod 3$ is called a \emph{long root}. Denote by $\Delta_{long}(CH^2_{0,g})$ the set of long roots of $CH^2_{0,g}$.
\end{enumerate}
\end{mydef}

Note that the expression $\langle g^{-1} \delta, \Lambda_{0, \hat{\mathbb{Z}}^p} \rangle \equiv 0 \mod 3$ makes sense as $g^{-1} \delta \in \Lambda_{0, \hat{\mathbb{Z}}^p}$.

\begin{mydef}
For $\delta \in \Delta(CH^2_{0,\mathbb{Q}})$, let $Z_{\delta} \subset X_{\mu}^G(b)(\bar{\mathbb{F}}_p)$ be the subset of those $ x \in G(\breve{\mathbb{Q}}_p)$ such that $x^{-1}(\delta) \in \Lambda_{0,W}$.
\end{mydef}

In analogy to the complex case we define
\begin{eqnarray*} Z_{6,g} := \bigcup_{\delta \in \Delta(CH^2_{0,g})} Z_{\delta}
& \textnormal{and} & Z_{2,g} := \bigcup_{\delta \in \Delta_{long}(CH^2_{0,g})} Z_{\delta}. \end{eqnarray*}
As before, the subset
$$ X^{\circ}_g(\bar{\mathbb{F}}_p) := X_{\mu}^G(b)(\bar{\mathbb{F}}_p) \setminus ( Z_{6,g} \cup Z_{2,g}) $$
can be seen as the set of $\bar{\mathbb{F}}_p $-points of an open subscheme $X_g^{\circ}$ of $X_{\mu}^G(b)$, and we set $\mathcal{M}^{\circ}_g := sp^{-1}(X^{\circ}_g)$.

\begin{thm}\label{cfuniformization}
For almost all $p$, the moduli space of smooth cubic fourfolds with supersingular reduction admits a uniformization
$$M_{cf, n, \breve{\mathbb{Q}}_p}^{ss} \cong \coprod_g \Gamma_g \backslash \mathcal{M}^{\circ}_g$$
of rigid spaces over $\breve{\mathbb{Q}}_p$, as well as of perfect schemes
$$M_{cf,ss,n, \bar{\mathbb{F}}_p}^{perf} \cong \coprod_g \Gamma_g \backslash X^{\circ}_g $$
over $\bar{\mathbb{F}}_p$.
\end{thm}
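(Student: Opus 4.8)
The plan is to follow the exact same strategy as in the K3 case, simply transporting each ingredient to the cubic-fourfold setting. First I would establish the analog of Theorem \ref{impadicper}: for almost all $p$, the $\bar{\mathbb{F}}_p$-points of the image of the period map $\overline{\varphi}: M_{cf,ss,n,\bar{\mathbb{F}}_p}^{perf} \hookrightarrow I_{\phi}(\mathbb{Q}) \backslash X_{\mu}^G(b) \times G(\mathbb{A}_f^p)/K_n^p$ are precisely the complement of the set $C := I_{\phi}(\mathbb{Q}) \backslash (\bigcup_{\delta} C_{\delta}) / K_n^p$, where $C_{\delta}$ ranges over $\delta$ a root or a long root of $CH^2_{0,\mathbb{Q}}$ and $C_{\delta}$ is defined (as in the K3 case) by the conditions $x^{-1}(\delta) \in \Lambda_{0,W}$ and $g^{-1}(\delta) \in \Lambda_{0,\hat{\mathbb{Z}}^p}$. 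To do this I need three facts: (a) surjectivity of a period map from some moduli space of "weakly polarized" cubic fourfolds onto the whole target for almost all $p$; (b) a positive-characteristic integrality statement playing the role of Proposition \ref{NSintegral}, saying that a class in $CH^2_{0,\mathbb{Q}}$ lies in the integral Chow group $CH^2_0$ if and only if its $\ell$-adic and crystalline realizations are integral; and (c) a Hodge/numerical criterion, analogous to Proposition \ref{criterionampleness}, distinguishing smooth cubic fourfolds from degenerate ones in terms of the existence of a class $\delta$ in $CH^2_0$ of square $2$ (discriminant-$6$ locus) or a long root of square $6$ (discriminant-$2$ locus) orthogonal to $h^2$ in the appropriate sense. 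Granting these, the argument is verbatim that of the proof of Theorem \ref{impadicper}: lift a point $[x,g]$ to a (possibly degenerate) cubic fourfold using (a), translate the conditions defining $C_\delta$ into integrality of $\rho^{-1}(\delta)$ in $CH^2_0$ via (b), and conclude via (c).

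Next I would reformulate $C$ as an open condition on the disjoint union. Exactly as in Definition \ref{defZdelta} and the discussion following Theorem \ref{impadicper}, write $I_{\phi}(\mathbb{Q}) \backslash X_{\mu}^G(b) \times G(\mathbb{A}_f^p)/K_n^p = \coprod_g \Gamma_g \backslash X_{\mu}^G(b)$, observe that the lattice $CH^2_{0,g} = CH^2_{0,\mathbb{Q}} \cap g\Lambda_{0,\hat{\mathbb{Z}}^p}$ depends only on the double coset, and check that for fixed $g$ the set $C_\delta$ (intersected with the $g$-component) is nonempty precisely when $\delta \in CH^2_{0,g}$; moreover the long-root condition $\langle g^{-1}\delta, \Lambda_{0,\hat{\mathbb{Z}}^p}\rangle \equiv 0 \bmod 3$ is exactly the integral translate of the long-root condition defining $\Delta_{long}(\Lambda_0)$. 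Hence the complement of $C$ in the $g$-component is $\Gamma_g \backslash (X_{\mu}^G(b)(\bar{\mathbb{F}}_p) \setminus (Z_{6,g} \cup Z_{2,g})) = \Gamma_g \backslash X^{\circ}_g(\bar{\mathbb{F}}_p)$. Since the image of $\overline{\varphi}$ is open (the period map is an open immersion), each $X_g^{\circ} \subset X_{\mu}^G(b)$ is an open subscheme, and we obtain the isomorphism of perfect schemes $M_{cf,ss,n,\bar{\mathbb{F}}_p}^{perf} \cong \coprod_g \Gamma_g \backslash X^{\circ}_g$.

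Finally, the passage from perfect schemes to rigid spaces is identical to the proof of Theorem \ref{padicuniformization}. One has the specialization map $sp: |\mathcal{M}| \to |X_{\mu}^G(b)|$, one knows $(\breve{M}_{cf,ss,n,W})_{\eta}^{rig} = sp^{-1}(|M_{cf,ss,n,\bar{\mathbb{F}}_p}|)$ because $\breve{M}_{cf,ss,n,W}$ is a formal open subscheme of the completion of the integral canonical model along the basic locus, and the Rapoport-Zink uniformization of $\mathscr{S}_{n,W}$ identifies $|\overline{\mathscr{S}}_n^b| = |\coprod_g \Gamma_g \backslash X_{\mu}^G(b)|$. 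Combining with the isomorphism on perfect schemes just established, $sp^{-1}$ of the open subset $\coprod_g \Gamma_g \backslash X^{\circ}_g$ is exactly $\coprod_g \Gamma_g \backslash \mathcal{M}^{\circ}_g$, which gives the rigid uniformization.

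The main obstacle is step (a), the surjectivity onto the full affine Deligne-Lusztig variety for almost all $p$: in the K3 case this is the input of Matsumoto (\cite{Matsumoto}, Theorem 4.1) on good reduction / surjectivity of the period map for quasi-polarized supersingular K3s, and one needs the analogous statement for cubic fourfolds, presumably via the associated K3-type motive and a corresponding moduli space of cubic fourfolds with merely "big and nef" polarization class, together with a degeneration argument; absent a ready reference this is the part requiring the "almost all $p$" qualifier and the most genuine work. Secondary care is needed in step (c), since the classification of the degenerate locus into discriminant $2$ and $6$ components (and the long-root condition) is where the cubic-fourfold geometry genuinely differs from the K3 case, and one must be sure that the characteristic-$p$ Chow-group version of the Laza–Looijenga description still holds for almost all $p$ — this should follow from the characteristic-$0$ statement together with the integral canonical model and a flatness argument exactly as in Section \ref{spimage}, but it should be spelled out.
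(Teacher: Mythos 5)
Your primary plan transplants the proof of Theorem \ref{impadicper} to cubic fourfolds, and its two key inputs are not available; the paper's actual proof takes the other route you only gesture at in your closing paragraph, namely the special-subvariety mechanism of Section \ref{spimage}. Concretely: your step (a) asks for a Matsumoto-type surjectivity of a period map from an enlarged moduli space of ``weakly polarized'' cubic fourfolds onto the full double quotient. No such moduli space or theorem exists here --- the complex loci $\mathcal{C}_2$ and $\mathcal{C}_6$ do not parametrize mildly degenerate cubics in any way analogous to quasi-polarized K3 surfaces (e.g.\ $\mathcal{C}_2$ corresponds to objects that are not cubic fourfolds at all), so there is nothing to degenerate to. Your step (c) likewise asks for a direct characteristic-$p$ geometric criterion (an analog of Proposition \ref{criterionampleness}) characterizing smoothness in terms of roots in $CH^2_0$; the only known statement of this shape is the characteristic-$0$ theorem of Laza--Looijenga, which is not a pointwise numerical criterion one can simply evaluate on a supersingular Chow group. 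Your step (b) would moreover require an integral Tate conjecture for codimension-two cycles on supersingular cubic fourfolds to get the analog of Proposition \ref{NSintegral}; the paper sidesteps this by \emph{defining} $CH^2_{0,g}$ as $CH^2_{0,\mathbb{Q}} \cap g\Lambda_{0,\hat{\mathbb{Z}}^p}$ inside cohomology rather than via an integral Chow group.

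The paper's proof avoids all three inputs at once. It spreads out the Laza--Looijenga description over $\mathbb{Z}[\frac{1}{N}]$ by the flatness argument of Section \ref{spimage}, so that for almost all $p$ the moduli space is the complement in $\mathscr{S}_{n,\mathbb{Z}_{(p)}}$ of the integral sub-Shimura varieties $\mathscr{S}_2$ and $\mathscr{S}_6$. Surjectivity onto the target is then automatic from the Rapoport--Zink uniformization of the \emph{ambient} Shimura variety $\mathscr{S}_n$ (Theorem \ref{RZuniformization}), which already identifies every point of the double quotient with a point of $\overline{\mathscr{S}}_n^b$; the only remaining task is to decide which of these points land in $\mathscr{S}_2 \cup \mathscr{S}_6$. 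That is done by applying Rapoport--Zink uniformization to $\mathscr{S}_2$ and $\mathscr{S}_6$ themselves and tracing the generic Hodge cycles $\delta$ through the Kuga--Satake construction to special endomorphisms (via the cubic-fourfold analog of Theorem \ref{Madapusi-Pera}, i.e.\ \cite{MP}, Theorem 5.14(2)), which shows the images are exactly the sets $C_{\delta}$ for roots and long roots; the reverse inclusion is the deformation-theoretic lifting of special endomorphisms to characteristic $0$ (\cite{MP2}, Corollary 8.15). Your final paragraph correctly identifies this mechanism as the fix for step (c), but you should recognize that it also replaces steps (a) and (b) entirely --- the lifting of special endomorphisms is precisely what substitutes for the missing surjectivity and integrality statements. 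Once the reduction $\overline{M}_{cf,ss,n}^{perf}\cong\coprod_g\Gamma_g\backslash X_g^{\circ}$ is established this way, your passage to rigid spaces via the specialization map is correct and matches the paper's proof of Theorem \ref{padicuniformization}.
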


\begin{proof}[Sketch of proof]
All arguments given in Section \ref{spimage} can be applied accordingly:
for almost all primes $p$, the moduli space $M_{cf,ss,n, \mathbb{Z}_{(p)}}$ will coincide with the complement in $\mathscr{S}_{n, \mathbb{Z}_{(p)}}$ of a union of sub-Shimura varieties $\mathscr{S}_2 $ and $\mathscr{S}_6$. Applying Rapoport-Zink uniformization to $\mathscr{S}_2$ and $\mathscr{S}_6$ and tracing through the Hodge cycles defined by $\delta \in \Delta(\Lambda_0)$ or $\delta \in \Delta_{long}(\Lambda_0)$, we find that the corresponding crystalline and étale elements come from a root or a long root in $CH^2_{0,\mathbb{Q}}$, by identifying them with special endomor\-phisms and using (\cite{MP}, Theorem 5.14(2)). Conversely, any element $\delta \in \Delta(CH_{0,g})$ or $\delta \in \Delta_{long}(CH_{0,g})$ lifts to characteristic $0$ using (\cite{MP2}, Corollary 8.15).
\end{proof}

\addcontentsline{toc}{section}{References}
\bibliographystyle{alpha}
\bibliography{padicK3_arXiv}

\begin{thebibliography}{EvdG15}

\bibitem[Ach20]{Achter}
Jeffrey~D. Achter.
\newblock Arithmetic occult period maps.
\newblock {\em Algebr. Geom.}, 7(5):581--606, 2020.

\bibitem[And96a]{AndreHK}
Yves Andr\'{e}.
\newblock On the {S}hafarevich and {T}ate conjectures for hyper-{K}\"{a}hler
  varieties.
\newblock {\em Math. Ann.}, 305(2):205--248, 1996.

\bibitem[And96b]{Andremotifs}
Yves Andr\'{e}.
\newblock Pour une th\'{e}orie inconditionnelle des motifs.
\newblock {\em Inst. Hautes \'{E}tudes Sci. Publ. Math.}, (83):5--49, 1996.

\bibitem[Art74]{Artin}
Michael Artin.
\newblock Supersingular {$K3$} surfaces.
\newblock {\em Ann. Sci. \'{E}cole Norm. Sup. (4)}, 7:543--567 (1975), 1974.

\bibitem[BO78]{BO}
Pierre Berthelot and Arthur Ogus.
\newblock {\em Notes on crystalline cohomology}.
\newblock Princeton University Press, Princeton, N.J.; University of Tokyo
  Press, Tokyo, 1978.

\bibitem[Bor72]{Borel}
Armand Borel.
\newblock Some metric properties of arithmetic quotients of symmetric spaces
  and an extension theorem.
\newblock {\em J. Differential Geometry}, 6:543--560, 1972.

\bibitem[BS17]{BS}
Bhargav Bhatt and Peter Scholze.
\newblock Projectivity of the {W}itt vector affine {G}rassmannian.
\newblock {\em Invent. Math.}, 209(2):329--423, 2017.

\bibitem[CFS21]{CFS}
Miaofen Chen, Laurent Fargues, and Xu~Shen.
\newblock On the structure of some {$p$}-adic period domains.
\newblock {\em Camb. J. Math.}, 9(1):213--267, 2021.

\bibitem[Del71]{DeligneShimura}
Pierre Deligne.
\newblock Travaux de {S}himura.
\newblock In {\em S\'{e}minaire {B}ourbaki, 23\`eme ann\'{e}e (1970/71), {E}xp.
  {N}o. 389}, pages 123--165. Lecture Notes in Math., Vol. 244. 1971.

\bibitem[Del79]{Delignemodels}
Pierre Deligne.
\newblock Vari\'{e}t\'{e}s de {S}himura: interpr\'{e}tation modulaire, et
  techniques de construction de mod\`eles canoniques.
\newblock In {\em Automorphic forms, representations and {$L$}-functions
  ({P}roc. {S}ympos. {P}ure {M}ath., {O}regon {S}tate {U}niv., {C}orvallis,
  {O}re., 1977), {P}art 2}, Proc. Sympos. Pure Math., XXXIII, pages 247--289.
  Amer. Math. Soc., Providence, R.I., 1979.

\bibitem[EvdG15]{CyclesK3}
Torsten Ekedahl and Gerard van~der Geer.
\newblock Cycle classes on the moduli of {K}3 surfaces in positive
  characteristic.
\newblock {\em Selecta Math. (N.S.)}, 21(1):245--291, 2015.

\bibitem[Far18]{Curve}
Laurent Fargues.
\newblock From local class field to the curve and vice versa.
\newblock In {\em Algebraic geometry: {S}alt {L}ake {C}ity 2015}, volume~97 of
  {\em Proc. Sympos. Pure Math.}, pages 181--198. Amer. Math. Soc., Providence,
  RI, 2018.

\bibitem[Has00]{Hassett}
Brendan Hassett.
\newblock Special cubic fourfolds.
\newblock {\em Compositio Math.}, 120(1):1--23, 2000.

\bibitem[Huy16]{K3global}
Daniel Huybrechts.
\newblock {\em Lectures on {K}3 surfaces}, volume 158 of {\em Cambridge Studies
  in Advanced Mathematics}.
\newblock Cambridge University Press, Cambridge, 2016.

\bibitem[Kim18]{Wansu}
Wansu Kim.
\newblock Rapoport-{Z}ink uniformization of {H}odge-type {S}himura varieties.
\newblock {\em Forum Math. Sigma}, 6:Paper No. e16, 36, 2018.

\bibitem[Kis10]{Intmod}
Mark Kisin.
\newblock Integral models for {S}himura varieties of abelian type.
\newblock {\em J. Amer. Math. Soc.}, 23(4):967--1012, 2010.

\bibitem[Kis17]{Kisin}
Mark Kisin.
\newblock {${\rm mod}\,p$} points on {S}himura varieties of abelian type.
\newblock {\em J. Amer. Math. Soc.}, 30(3):819--914, 2017.

\bibitem[Kot85]{Kottwitz}
Robert~E. Kottwitz.
\newblock Isocrystals with additional structure.
\newblock {\em Compositio Math.}, 56(2):201--220, 1985.

\bibitem[Kot97]{Kottwitz2}
Robert~E. Kottwitz.
\newblock Isocrystals with additional structure. {II}.
\newblock {\em Compositio Math.}, 109(3):255--339, 1997.

\bibitem[Kul77]{Kulikov}
Vik.~S. Kulikov.
\newblock Degenerations of {$K3$} surfaces and {E}nriques surfaces.
\newblock {\em Izv. Akad. Nauk SSSR Ser. Mat.}, 41(5):1008--1042, 1199, 1977.

\bibitem[Laz10]{Laza}
Radu Laza.
\newblock The moduli space of cubic fourfolds via the period map.
\newblock {\em Ann. of Math. (2)}, 172(1):673--711, 2010.

\bibitem[Lie16]{Liedtke}
Christian Liedtke.
\newblock Lectures on supersingular {K}3 surfaces and the crystalline {T}orelli
  theorem.
\newblock In {\em K3 surfaces and their moduli}, volume 315 of {\em Progr.
  Math.}, pages 171--235. Birkh\"{a}user/Springer, [Cham], 2016.

\bibitem[Loo09]{Looijenga}
Eduard Looijenga.
\newblock The period map for cubic fourfolds.
\newblock {\em Invent. Math.}, 177(1):213--233, 2009.

\bibitem[Man63]{Manin}
Ju.~I. Manin.
\newblock Theory of commutative formal groups over fields of finite
  characteristic.
\newblock {\em Uspehi Mat. Nauk}, 18(6 (114)):3--90, 1963.

\bibitem[Mat14]{Matsumoto}
Yuya Matsumoto.
\newblock Good reduction criterion for k3 surfaces.
\newblock {\em Mathematische Zeitschrift}, 279(1–2):241–266, Sep 2014.

\bibitem[Mil92]{Milnepoints}
James~S. Milne.
\newblock The points on a {S}himura variety modulo a prime of good reduction.
\newblock In {\em The zeta functions of {P}icard modular surfaces}, pages
  151--253. Univ. Montr\'{e}al, Montreal, QC, 1992.

\bibitem[Mil94]{Milnemotives}
J.~S. Milne.
\newblock Shimura varieties and motives.
\newblock In {\em Motives ({S}eattle, {WA}, 1991)}, volume~55 of {\em Proc.
  Sympos. Pure Math.}, pages 447--523. Amer. Math. Soc., Providence, RI, 1994.

\bibitem[MP15]{MP}
Keerthi Madapusi~Pera.
\newblock The {T}ate conjecture for {K}3 surfaces in odd characteristic.
\newblock {\em Invent. Math.}, 201(2):625--668, 2015.

\bibitem[MP16]{MP2}
Keerthi Madapusi~Pera.
\newblock Integral canonical models for spin {S}himura varieties.
\newblock {\em Compos. Math.}, 152(4):769--824, 2016.

\bibitem[Ogu79]{K3crystals}
Arthur Ogus.
\newblock Supersingular {$K3$} crystals.
\newblock In {\em Journ\'{e}es de {G}\'{e}om\'{e}trie {A}lg\'{e}brique de
  {R}ennes ({R}ennes, 1978), {V}ol. {II}}, volume~64 of {\em Ast\'{e}risque},
  pages 3--86. Soc. Math. France, Paris, 1979.

\bibitem[Ogu83]{Ogus}
Arthur Ogus.
\newblock A crystalline {T}orelli theorem for supersingular {$K3$} surfaces.
\newblock In {\em Arithmetic and geometry, {V}ol. {II}}, volume~36 of {\em
  Progr. Math.}, pages 361--394. Birkh\"{a}user Boston, Boston, MA, 1983.

\bibitem[P{\v{S}}71]{Torelli}
I.~I. {Pjatecki{\u{i}}-{\v{S}}apiro} and I.~R. {{\v{S}}afarevi{\v{c}}}.
\newblock {A Torelli Theorem for Algebraic Surfaces of Type K3}.
\newblock {\em Izvestiya: Mathematics}, 5(3):547--588, June 1971.

\bibitem[Riz06]{Rizovmoduli}
Jordan Rizov.
\newblock Moduli stacks of polarized {$K3$} surfaces in mixed characteristic.
\newblock {\em Serdica Math. J.}, 32(2-3):131--178, 2006.

\bibitem[Riz10]{RizovCrelle}
Jordan Rizov.
\newblock Kuga-{S}atake abelian varieties of {K}3 surfaces in mixed
  characteristic.
\newblock {\em J. Reine Angew. Math.}, 648:13--67, 2010.

\bibitem[RZ96]{RZ}
M.~Rapoport and Th. Zink.
\newblock {\em Period Spaces for p-divisible Groups (AM-141)}.
\newblock Princeton University Press, 1996.

\bibitem[She20]{Xu}
Xu~Shen.
\newblock On some generalized {R}apoport-{Z}ink spaces.
\newblock {\em Canad. J. Math.}, 72(5):1111--1187, 2020.

\bibitem[SW20]{Berkeley}
Peter Scholze and Jared Weinstein.
\newblock {\em Berkeley Lectures on p-adic Geometry: (AMS-207)}.
\newblock Princeton University Press, 2020.

\bibitem[Tae18]{Taelman}
Lenny Taelman.
\newblock Complex {M}ultiplication and {S}himura {S}tacks, 2018.
\newblock arXiv:1707.01236.

\bibitem[Tat95]{Tateconjecture}
John Tate.
\newblock On the conjectures of {B}irch and {S}winnerton-{D}yer and a geometric
  analog.
\newblock In {\em S\'{e}minaire {B}ourbaki, {V}ol. 9}, pages Exp. No. 306,
  415--440. Soc. Math. France, Paris, 1995.

\bibitem[Voi86]{VoisinTorelli}
Claire Voisin.
\newblock Th\'{e}or\`eme de {T}orelli pour les cubiques de {${\bf P}^5$}.
\newblock {\em Invent. Math.}, 86(3):577--601, 1986.

\bibitem[Yan20]{Isogenies}
Ziquan Yang.
\newblock {Isogenies Between K3 Surfaces Over $\bar{\mathbb{F}}_p$}.
\newblock {\em International Mathematics Research Notices}, 07 2020.

\bibitem[Zhu17]{Zhu}
Xinwen Zhu.
\newblock Affine {G}rassmannians and the geometric {S}atake in mixed
  characteristic.
\newblock {\em Ann. of Math. (2)}, 185(2):403--492, 2017.

\end{thebibliography}

\end{document}